\documentclass[reqno,12pt]{amsart}

\usepackage[utf8]{inputenc}

\usepackage{enumerate}
\usepackage[margin=1in,marginparwidth=0.7in]{geometry}

\usepackage{ifpdf}
\usepackage{amsmath}
\usepackage{amsfonts}
\usepackage{amssymb}
\usepackage{amsthm}
\usepackage[ocgcolorlinks,hyperfootnotes=false,colorlinks=true,citecolor=blue,linkcolor=blue,urlcolor=blue]{hyperref}
\usepackage{setspace}
\usepackage{amsrefs}
\usepackage{nicefrac}
\usepackage{graphicx}
\usepackage{color}
\usepackage{mathtools}

\newcommand{\ignore}[1]{}


\renewcommand{\Re}{\operatorname{Re}}
\renewcommand{\Im}{\operatorname{Im}}


\newcommand{\abs}[1]{\left\lvert {#1} \right\rvert}
\newcommand{\sabs}[1]{\lvert {#1} \rvert}

\newcommand{\norm}[1]{\left\lVert {#1} \right\rVert}
\newcommand{\snorm}[1]{\lVert {#1} \rVert}
\newcommand{\bnorm}[1]{\bigl\lVert {#1} \bigr\rVert}

\newcommand{\C}{{\mathbb{C}}}
\newcommand{\R}{{\mathbb{R}}}

\newcommand{\D}{{\mathbb{D}}}


\newcommand{\sV}{{\mathcal{V}}}


\newcommand{\rank}{\operatorname{rank}}


\newtheorem{thm}{Theorem}[section]

\newtheorem{prop}[thm]{Proposition}

\newtheorem{lemma}[thm]{Lemma}

\newtheorem{question}[thm]{Question}

\theoremstyle{definition}
\newtheorem{defn}[thm]{Definition}
\newtheorem{example}[thm]{Example}

\theoremstyle{remark}


\author{Ji\v{r}\'{\i} Lebl}
\thanks{The first author was in part supported by Simons Foundation collaboration grant 710294.}
\address{Department of Mathematics, Oklahoma State University,
Stillwater, OK 74078, USA}
\email{lebl@okstate.edu}

\author{Alan Noell}
\address{Department of Mathematics, Oklahoma State University,
Stillwater, OK 74078, USA}
\email{alan.noell@okstate.edu}

\author{Sivaguru Ravisankar}
\address{Tata Institute of Fundamental Research, Centre for Applicable Mathematics, Bengaluru 560065, India}
\email{sivaguru@tifrbng.res.in}

\date{August 8, 2021}

\setlength{\emergencystretch}{3em}

\ifpdf
\hypersetup{
  pdftitle={On CR singular CR images},
  pdfauthor={Jiri Lebl, Alan Noell, Sivaguru Ravisankar},
  pdfsubject={Several Complex Variables},
  pdfkeywords={CR singular, removable CR singularity, CR image},
}
\fi

\title{On CR singular CR images}

\keywords{CR singular, removable CR singularity, CR image}
\subjclass[2010]{32V05 (Primary),  32V30 (Secondary)}

\begin{document}

\begin{abstract}
We say that a CR singular submanifold $M$ has a removable
CR singularity if the CR structure at the CR points of $M$
extends through the
singularity as an abstract CR structure on $M$.  We study such real-analytic
submanifolds, in which case removability is equivalent to $M$ being the
image of a generic real-analytic submanifold $N$ under a holomorphic map that is
a diffeomorphism of $N$ onto $M$, what we call a CR image.
We study the stability of the CR singularity under perturbation,
the associated quadratic invariants,
and
conditions for removability of a CR singularity.
A lemma is also proved about perturbing away the zeros of holomorphic functions on CR submanifolds, which could be of independent interest.
\end{abstract}

\maketitle


\section{Introduction} \label{section:intro}

Let $M \subset \C^m$ be a real-analytic submanifold, and let $T^{0,1}_p M$
be the set of antiholomorphic tangent vectors at $p \in M$,
that is, vectors in $\C \otimes T_p M$ that are in the span of
$\frac{\partial}{\partial \bar{z}_1},\ldots,\frac{\partial}{\partial \bar{z}_m}$.
If the dimension of $T^{0,1}_p M$ is constant as $p$ varies over $M$
(we assume $M$ is connected),
then $M$ is said to be a \emph{CR submanifold}, and
the dimension of $T^{0,1}_p M$ is the \emph{CR dimension} of $M$.
The disjoint union $T^{0,1} M$ then becomes a real-analytic vector bundle.
Any real hypersurface is a CR submanifold,
but submanifolds of higher codimension need not be so.
If $T^{0,1}_p M$ is not constant in any neighborhood of $q \in M$,
we say $q$ is a \emph{CR singularity}.
If $T^{0,1}_p M$ is constant near $q \in M$, then $q$ is a \emph{CR point}.
We denote the set of CR points of $M$ as $M_{CR}$.
The purpose of this paper is to study certain CR singularities.  Namely:

\begin{defn}
A point $q$ of a real-analytic submanifold $M \subset \C^m$ is a
\emph{removable CR singularity} if there is,
locally near $q$, a real-analytic vector bundle $\sV$ on $M$ such that, for every
CR point $p$, the fiber $\sV_p$ equals $T^{0,1}_p M$.

If $M$ has only removable CR singularities, we say it is a \emph{CR image},
and we call $(M,\sV)$ the
\emph{extended abstract CR structure}.
\end{defn}

As $p \mapsto \dim T_p^{0,1} M$ is given by the rank of a certain matrix with
real-analytic entries, the set of CR singularities is a
real-analytic subvariety of $M$. Furthermore, if $M$ is connected,
then the CR dimension of $M_{CR}$ is constant.
The vector bundle $\sV$ from above
defines an abstract CR structure as it satisfies the
conditions $\sV \cap \bar{\sV} = \{ 0 \}$ and 
$[\sV,\sV] \subset \sV$ on a dense open set, and therefore everywhere.
In other words, a CR singularity $q$ is removable if the manifold $M$ is an
abstract CR manifold near $q$ whose CR structure equals the CR structure
induced by the embedding at CR points.  Note that our definition includes CR points as removable CR singularities.

The smallest complex subvariety containing a submanifold $N \subset \C^n$
is called the \emph{intrinsic complexification} of $N$. For CR submanifolds, the
intrinsic complexification is a complex submanifold.
We say a real-analytic submanifold $N$ of $\C^n$ is \emph{generic}
if $N$ is CR and the intrinsic
complexification of $N$ is (locally) equal to $\C^n$.  In our context,
this is equivalent to the standard definition of generic.
If the CR dimension
of $N$ is $d$ and the real codimension of $N$ in $\C^n$ is $k$, then
$N$ is generic if and only if $k=n-d$.
More specifically, the intrinsic complexification is always of dimension
$d+k$.

Let $M$ be a CR singular submanifold with a removable CR singularity.
The abstract CR manifold $(M,\sV)$ is integrable, so it 
can be (locally) realized as a generic submanifold $N$ in $\C^n$ (where 
$n=\dim M - \operatorname{CRdim} M$)
(see \cite{BER:book}*{Theorem 2.1.11}).  This leads to the following proposition.

\begin{prop}
A point $q$ of a real-analytic submanifold $M \subset \C^m$ is a
removable CR singularity if and only if
there exist a real-analytic generic submanifold
$N \subset \C^n$, $n \leq m$, a neighborhood $U$ of $N$ in $\C^n$,
a neighborhood $W$ of $q$ in $\C^m$,
and a holomorphic mapping $F \colon U \to W$ such that
\begin{enumerate}[(i)]
\item The (holomorphic) Jacobian $DF$ is of rank $n$ on a relatively
open dense subset of $N$,
\item $M \cap W = F(N)$,
\item $F|_N$ is a diffeomorphism onto its image.
\end{enumerate}
\end{prop}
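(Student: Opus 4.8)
First I would establish the forward implication. Suppose $q$ is a removable CR singularity with extended abstract CR structure $(M,\sV)$ near $q$. By the discussion immediately preceding the proposition, $(M,\sV)$ is an integrable abstract CR manifold, so by \cite{BER:book}*{Theorem 2.1.11} there is a CR diffeomorphism $\varphi$ from a neighborhood of $q$ in $M$ onto a real-analytic generic submanifold $N \subset \C^n$, where $n = \dim M - \operatorname{CRdim} M$. I would then need to promote $\varphi^{-1} \colon N \to M$ to a holomorphic map $F$ on a neighborhood $U$ of $N$ in $\C^n$. The mechanism for this is standard: a real-analytic CR map from a generic real-analytic submanifold into $\C^m$ extends to a holomorphic map on an ambient neighborhood, because the components are CR functions, which by genericity and real-analyticity extend holomorphically (see, e.g., \cite{BER:book}). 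The resulting $F \colon U \to W$ satisfies $M \cap W = F(N)$ and $F|_N = \varphi^{-1}$ is a diffeomorphism onto its image, giving (ii) and (iii). For (i), I would argue that $DF$ has rank $n$ on a dense open subset of $N$: since $F|_N$ is an immersion (being a diffeomorphism onto a submanifold), the real differential of $F$ along $N$ is injective, and because $F$ is holomorphic and $N$ is generic, the complex-linear map $DF_p$ restricted to $\C \otimes T_p N$ has rank $n$ at CR points of $N$ — genericity of $N$ ensures $\C \otimes T_p N$ spans all of $T_p^{1,0}\C^n$, and $DF_p$ being injective on the real tangent space forces full complex rank. Thus $DF$ has rank $n$ everywhere on $N_{CR}$, which is relatively open and dense in $N$.

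For the converse, suppose $N$, $U$, $W$, and $F$ are given as in (i)–(iii). I want to produce a real-analytic vector bundle $\sV$ on $M \cap W$ whose fiber at each CR point $p$ of $M$ equals $T_p^{0,1} M$. The natural candidate is $\sV := dF(T^{0,1} N)$, pushed forward by $F$; more precisely, since $F|_N$ is a diffeomorphism onto $M$, for $x = F(p) \in M$ set $\sV_x := dF_p\bigl(T_p^{0,1} N\bigr)$, where $dF_p$ is the complexified differential. Because $F$ is holomorphic, $dF_p$ maps $T_p^{0,1}\C^n$ into $T_{F(p)}^{0,1}\C^m$, so $\sV_x \subset T_x^{0,1}\C^m$; and since $T^{0,1} N$ is a real-analytic bundle on $N$, pushing forward by the real-analytic diffeomorphism $F|_N$ gives a real-analytic bundle $\sV$ on $M$, provided $dF_p$ is injective on $T_p^{0,1} N$ — which follows because $F|_N$ is an immersion. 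The remaining point is that $\sV_x = T_x^{0,1} M$ at every CR point $x$ of $M$. At such a point, I would note that $F|_N$ being a diffeomorphism means $T_x M = dF_p(T_p N)$, hence $\C \otimes T_x M = dF_p(\C \otimes T_p N)$, and intersecting with $T_x^{0,1}\C^m$ and using that $dF_p$ is a complex-linear map respecting the $(1,0)/(0,1)$ splitting, one gets $T_x^{0,1} M \supseteq dF_p(T_p^{0,1} N) = \sV_x$; a dimension count using $\dim_\C \sV_x = \operatorname{CRdim} N = \operatorname{CRdim} M_{CR} = \dim_\C T_x^{0,1} M$ (the middle equality because $\dim M = \dim N$ and $F|_N$ is a diffeomorphism, combined with the fact — noted earlier in the excerpt — that the CR dimension is constant on $M_{CR}$) forces equality.

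The main obstacle I anticipate is the converse direction, specifically verifying carefully that $\sV_x = T_x^{0,1} M$ at CR points rather than merely an inclusion, and that the CR dimensions match up correctly; the subtlety is that condition (i) — rank $n$ of $DF$ on a dense subset of $N$ — is exactly what is needed to rule out degenerate behavior and to ensure the fiber dimension of $\sV$ is the "expected" one, namely $\operatorname{CRdim} N$, rather than something larger coming from a collapse of $DF$. The forward direction is comparatively routine once one invokes the realization theorem and the holomorphic extension of CR functions on generic submanifolds, so I would keep that part brief and concentrate the writing on the rank argument in (i) and the fiber-identification in the converse.
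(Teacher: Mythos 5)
Your converse direction is essentially the paper's (implicit) argument: push $T^{0,1}N$ forward by $F|_N$ and match dimensions, with the caveat you correctly flag that the equality $\operatorname{CRdim} N = \operatorname{CRdim} M_{CR}$ comes from condition (i) and not, as your main text suggests, merely from $\dim M = \dim N$ and $F|_N$ being a diffeomorphism (the paper's example $N=\{z=\bar w\}$, $F(z,w)=(z,0)$ shows the latter two do not suffice).

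The genuine gap is in your argument for (i) in the forward direction. You claim that injectivity of the real differential $dF_p|_{T_pN}$ together with genericity of $N$ forces the holomorphic Jacobian $DF_p$ to have rank $n$ at every CR point of $N$; since $N$ is a CR (indeed generic) submanifold, $N_{CR}=N$, so you are asserting $DF$ has rank $n$ everywhere on $N$. That is false, and if it were true there would be no CR singular CR images at all: by the paper's Proposition \ref{prop:locCRsing}, $M$ is CR singular at $F(p)$ exactly where $DF$ drops rank on $N$. A concrete counterexample is $N=\R^2\subset\C^2$ (generic, totally real) and $F(z_1,z_2)=(z_1+iz_2,\,z_1^2+z_2^2)$, which restricts to a diffeomorphism onto the elliptic Bishop surface $w=\sabs{z}^2$ while $\det DF = 2z_2-2iz_1$ vanishes at the origin. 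The error is in the linear algebra: the complexified differential splits as $DF_p\oplus\overline{DF_p}$ on $T_p^{1,0}\oplus T_p^{0,1}$, and a vector $u\in\C\otimes T_pN$ whose $(1,0)$ part lies in $\ker DF_p$ can still satisfy $dF_p(u)\neq 0$ because its $(0,1)$ part survives; so injectivity on $\C\otimes T_pN$ says nothing about injectivity of $DF_p$ on $T_p^{1,0}\C^n$. The paper's actual argument identifies the correct dense open set as $f^{-1}(M_{CR})$ (not $N_{CR}$): at a point of $M_{CR}$ the inverse of $f=F|_N$ is a CR diffeomorphism onto $N$, it extends to a biholomorphism from the intrinsic complexification of $M$ (which has complex dimension $n$) onto a neighborhood in $\C^n$, and that extension inverts $F$, forcing $\operatorname{rank} DF = n$ there. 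You need this intrinsic-complexification step; the immersion-plus-genericity shortcut does not work.
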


The forward implication is proved by constructing the generic $N$ from $(M,\sV)$, and then extending the
natural map $f \colon N \to M$ into the intrinsic complexification of $N$ to get a holomorphic map $F$.
On $f^{-1}(M_{CR})$, $F$ will locally be a biholomorphism
onto the intrinsic
complexification of $M$ (it is a real-analytic CR diffeomorphism of CR
submanifolds there).  This means that $F$ is of rank $n$ on
$f^{-1}(M_{CR})$, which is a relatively open dense subset of $N$.

Due to the proposition, we may say that $M$ is a CR image of $N$.
As our manifolds are real analytic, we can just think of the embedded $N$ from
the proposition as the extended abstract CR structure.
CR images were first noticed in Ebenfelt--Rothschild~\cite{ER} and were
studied more generally in Lebl--Minor--Shroff--Son--Zhang~\cite{LMSSZ}.
The extended structure is the CR Nash blowup; see Garrity~\cite{Garrity}.

We consider three main questions regarding CR images.
First we study whether, given a CR image, a CR singularity can be perturbed away
by keeping the CR structure $N$ fixed and perturbing the embedding $F$.
In Section~\ref{sec:equidim},
we find a geometric condition for stability of the CR singularity
when $n = m$.
In the equidimensional setting,
every CR singularity of $M$ is the image under $F$ of a point $p\in N$ where the Jacobian determinant 
of $F$ vanishes (see Proposition \ref{prop:locCRsing}).
We require that the variety where the Jacobian determinant
of $F$ vanishes, $Z_{\det DF}$, 
be ``transverse'' to the tangent space of $N$
in the following way:
\begin{equation}
H_p N \not\subset
 C_p Z_{\det DF} .
\end{equation}
Here, $H_p$ is the set of tangent vectors invariant under
multiplication by $i$, which is a complex vector space,
and $C_p$ is the tangent cone.
Under these conditions,  a CR singularity at $F(p)$ cannot be eliminated by perturbing $F$,
that is, for all $G$ close enough to $F$, $G(N)$ is CR singular.
See Theorem \ref{thm:noperturbmapequi}.
So in general a CR singularity cannot be eliminated
in this way.

This theorem follows from an interesting result (Lemma~\ref{lemma:main}) about perturbing away
zeros of holomorphic functions on CR submanifolds.  In particular, the
transversality condition above guarantees that the zero set cannot be
perturbed away.  This problem is surprisingly subtle, and
a more general version of it
leads to difficult questions involving homotopy groups of
spheres (see \cite{CoffmanLebl}).
What we prove is for $N$ as above:  We cannot perturb away the zero of
a holomorphic function $\varphi$
as long as
\begin{equation}
H_p N \not\subset
 C_p Z_{\varphi} .
\end{equation}
Not only can $Z_{\varphi}$ be singular, but even if it is a manifold
the derivatives of $\varphi$ could vanish at $p$.  Moreover, this result is
not true if $\varphi$ is only be assumed to be real analytic, even if the
zero set of $\varphi$ is a submanifold and transverse to $N$ in the standard
way.  For example, if $\varphi$ is a real square, then adding
$\epsilon > 0$ eliminates the zero completely.
We remark that 
$H_p N \not\subset C_p Z_{\varphi}$ in particular implies that
$\varphi$ does not vanish identically on $N$.

Returning to CR images, in the non-equidimensional setting,
we find an inequality in terms of $n$, $m$,
and the codimension $k$ of $N$ that guarantees that a CR singularity can be perturbed away.
In particular, if
\begin{equation} \label{eq:dimineq}
4n-k < 2(m+1) ,
\end{equation}
then near every point of $N$ there exists $G$ arbitrarily close to $F$
such that $G(N)$ is CR.
See Theorem \ref{thm:perturbmap}.

The inequality \eqref{eq:dimineq} is sharp:
For any $2 \leq k \leq n \leq m$ such that
$4n-k \geq 2(m+1)$, there exist $N$ and $F$ such that $F(N)$ is CR singular and
$G(N)$ is CR singular
for all small enough perturbations $G$ of $F$.  See
Proposition~\ref{prop:sharpnessconstruction}.  In fact, for
these dimensions, such an example exists near any CR image.

The second question we consider, in Section \ref{section:quad},
concerns
the biholomorphic invariants found
in \cite{crext5} for
codimension-2 CR images. We show that they are independent of the
extended abstract CR structure, that is, we can pair any quadratic invariant
with any abstract CR structure.  We also show by example that there  exist
higher-order invariants of the embedding that are independent of the
extended abstract CR structure.

Our third question concerns codimension-2 CR singular submanifolds $M$ of $\C^3$ for which the CR dimension of $M_{CR}$ is 1.
See Section~\ref{sec:dim3codim1}.
The CR structure is given by a single vector field outside of the CR singularities, and we consider the quotients of the two coefficients of this vector field. 
A necessary and sufficient condition for a CR singularity to be
removable is that one of the quotients extend to be real analytic at the singularity.  
There exist examples of submanifolds with a 
CR vector field, nonvanishing at the CR singular points,
that is $C^k$, but no better. 
If such a smooth vector field exists,
then there exists a real-analytic one,
and the CR singularity is removable.  Furthermore, we prove that
submanifolds with nonvanishing quadratic invariants (as in the preceding section)
that do not correspond to a parabolic Bishop surface and have a small enough
CR singular set are not CR images.

CR singular submanifolds were first studied by Bishop~\cite{Bishop65} in
$\C^2$.  His work was extended in $\C^2$ by 
Moser--Webster~\cite{MoserWebster83},
Moser~\cite{Moser85},
Kenig--Webster
\cites{KenigWebster:82}, Gong~\cite{Gong94:duke},
Huang--Krantz~\cite{HuangKrantz95}, 
Huang--Yin~\cite{HuangYin09}, 
and many others.
In dimension higher than 2, the work is more recent.
See Lai~\cite{Lai}, Dolbeault--Tomassini--Zaitsev~\cites{DTZ,DTZ2},
Coffman~\cite{Coffman},
Huang~\cite{Huang98},
Huang--Yin~\cites{HuangYin09:codim2,HuangYin:flattening1,HuangYin:flattening2},
Fang--Huang~\cite{FangHuang},
Burcea~\cite{Burcea},
Gong--Lebl~\cite{GongLebl},
Gong--Stolovich~\cites{GongStolovitchI,GongStolovitchII},
Slapar~\cite{Slapar},
Slapar--Star\v{c}i\v{c}~\cite{SlaparStarcic},
and the references within.


\section{Perturbing away zeros of holomorphic functions on CR submanifolds}

We first take the promised detour on perturbing away zeros of holomorphic
functions on CR submanifolds.  More precisely, we give conditions when this perturbation is
not possible.

As we mentioned before, any 
abstract real-analytic CR structure on an abstract real-analytic manifold
is (locally) integrable (see \cite{BER:book}*{Theorem 2.1.11}).  Thus, as far as local results are concerned, we may always just study
generic submanifolds in $\C^n$ if we are studying real-analytic CR
manifolds.

Suppose that 
$N \subset \C^n$ is a  real-analytic generic submanifold and $\varphi$ is a holomorphic
function that is zero somewhere on $N$.
The question we wish to study in this section is:
Can $\varphi$ be perturbed to move the zero off of $N$?
If we think of the zero set of $\varphi$,
the set $Z_\varphi$, then the idea that comes to mind is transversality.
If $N$ and $Z_\varphi$ meet transversally, that is, if their tangent spaces
at the intersection span the tangent space of $\C^n$, then
perturbing $Z_\varphi$ as a complex submanifold cannot eliminate the
intersection.
However, we are interested in perturbing not $Z_\varphi$
but the function $\varphi$.

If $N$ and $Z_\varphi$ were
both complex analytic, then a result like this would simply boil down to 
Rouch\'e's theorem, where the ``transversality'' condition would simply be
that $Z_\varphi$ does not contain $N$.

Interestingly, 
as long as the tangent cone of
$Z_\varphi$ and the complex tangent space of $N$ are ``transverse'' in the right
sense, then again zeros cannot be perturbed away.  The examples after the
proof of Lemma \ref{lemma:main} show why some transversality condition is necessary.
Recall that the \emph{tangent cone} $C_p X$ of a complex subvariety $X\subset \C^n$ at $p \in X$
is defined as the set of vectors $v$ such that there are a sequence $\{ p_j
\}$ in $X$ converging to $p$ and a sequence of complex numbers $\{ a_j \}$ such that
\begin{equation}
v = \lim_{j\to \infty} a_j (p_j - p) .
\end{equation}
We regard $C_p X$ as a subspace of the tangent space of $\C^n$ at
$p$.
If $\varphi$ is holomorphic near $p$, it is not hard to see (e.g.,  \cite{Whitney:book}*{Chapter 7, Theorem 4A}) that the tangent cone of $Z_\varphi$ at $p$ is given by the vanishing of the
lowest-order terms of the expansion of $\varphi$ at $p$.
If $X$ is a complex submanifold near $p$, the tangent cone is simply
the tangent space at $p$.
The \emph{complex tangent space} $H_p N$ of a CR submanifold is precisely
the set of tangent vectors of $N$ invariant under multiplication by $i$.
As $H_p N$ has a natural complex structure, we regard $H_p N$ as a
complex vector space.  If $N$ is a complex submanifold, then
$T_p N = H_p N$, and we regard $T_p N$ as a complex vector space.
Transversality using the complex tangent space $H_p N$ is sometimes called
CR transversality, although we require a somewhat different notion.

The transversality condition we will consider is the following:
\begin{equation}
H_p N \not\subset C_p Z_\varphi .
\end{equation}
To justify this condition note that if $d\varphi \not= 0$ and $Z_\varphi$
is a submanifold, then $C_p Z_\varphi$ is the tangent space and is
$(n-1)$-dimensional.  Suppose for a moment that $N$ is a complex
submanifold. Then $H_p N \not\subset C_p Z_\varphi$ simply means 
standard transversality: $\operatorname{span} \{  T_p N , T_p Z_\varphi \} =
T_p \C^n$,
as at least one (complex) direction of $T_p N$ is
not in $T_p Z_\varphi$, and we need only one more direction.

Note that if $H_p N \not\subset C_p Z_\varphi$, then necessarily
$H_p N$ must be of positive dimension, that is, $N$ must be of CR dimension
at least 1.

\begin{lemma} \label{lemma:main}
Suppose $N \subset \C^n$ is a connected real-analytic generic submanifold, $p\in N$,
 $\varphi$ is a holomorphic function defined in a neighborhood of $p$, and
$\varphi(p)=0$.
If
\begin{equation}
H_p N \not\subset C_p Z_\varphi ,
\end{equation}
then
for any neighborhood $W$ of $p$ there exists $\epsilon > 0$ such that
whenever $\psi \colon W \to \C$ is holomorphic
and $\norm{\varphi-\psi}_{W} < \epsilon$, then $\psi$ vanishes somewhere on $N \cap W$.
\end{lemma}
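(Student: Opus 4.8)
The plan is to reduce to a statement about the boundary of a small complex disc attached to $N$, then invoke a Rouché-type argument on that disc. Since $H_pN\not\subset C_pZ_\varphi$, pick a nonzero vector $v\in H_pN$ with $v\notin C_pZ_\varphi$. I would like to produce a small holomorphic disc $\Delta\colon \overline{\D}\to\C^n$ with $\Delta(0)=p$, $\Delta'(0)$ proportional to $v$, whose boundary $\Delta(\partial\D)$ lies on $N$, and such that $\varphi\circ\Delta$ has a zero at $0$ but is bounded away from zero on $\partial\D$. The existence of such a disc with boundary in $N$ is where genericity of $N$ enters: because $v\in H_pN$, the complex line $p+\C v$ is, to first order, tangent to $N$ in a complex direction, and for a generic submanifold one can correct a small disc in this direction so that its boundary circle sits on $N$ — concretely, write $N$ near $p$ in normal coordinates $(z,w)\in\C^d\times\C^{n-d}$ with $w=\bar w$ on $N$ (i.e. $\Im w=\rho(z,\bar z,\Re w)$ with $d\rho=0$ at $p$), take $v$ in the $z$-directions, and solve a small Bishop-type equation to get the $w$-component of the disc; for $r$ small the disc $\Delta_r(\zeta)=(r\zeta v', \text{(correction)})$ has boundary on $N$ and is $C^1$-close to the flat disc $\zeta\mapsto p+r\zeta v'$.

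Next I would control $\varphi$ on this disc. Let $\ell$ be the order of vanishing of $\varphi$ at $p$ and let $\varphi_\ell$ be its lowest-order homogeneous part; by the cited fact (Whitney), $C_pZ_\varphi=\{\varphi_\ell=0\}$, so $\varphi_\ell(v)\neq 0$. Hence along the flat disc, $\varphi(p+r\zeta v')=\varphi_\ell(v')\,r^\ell\zeta^\ell + o(r^\ell)$ uniformly for $\zeta\in\overline{\D}$. For the true disc $\Delta_r$, which is $C^1$-$O(r^2)$-close to the flat one (the Bishop correction is quadratically small because $d\rho(p)=0$), the same expansion holds: $\varphi\circ\Delta_r(\zeta)=c\,r^\ell\zeta^\ell+o(r^\ell)$ with $c=\varphi_\ell(v')\neq 0$. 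Therefore, for $r$ small enough there is a constant $\delta_r>0$ with $\abs{\varphi\circ\Delta_r(\zeta)}\ge \delta_r$ for all $\abs{\zeta}=1$, while $\varphi\circ\Delta_r$ has a zero of order $\ell$ at $\zeta=0$; one can take $\delta_r \sim \tfrac{1}{2}\abs{c}r^\ell$.

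Finally, the perturbation argument. Fix the neighborhood $W$; shrink $r$ so that $\Delta_r(\overline{\D})\subset W$ and the above holds, and set $\epsilon=\delta_r$. If $\psi\colon W\to\C$ is holomorphic with $\norm{\varphi-\psi}_W<\epsilon$, then on $\abs{\zeta}=1$ we have $\abs{\psi\circ\Delta_r-\varphi\circ\Delta_r}<\epsilon\le\abs{\varphi\circ\Delta_r}$, so by Rouché's theorem $\psi\circ\Delta_r$ and $\varphi\circ\Delta_r$ have the same number of zeros (with multiplicity) inside $\D$, namely $\ell\ge 1$. Thus $\psi\circ\Delta_r$ vanishes at some $\zeta_0\in\D$, and since $\Delta_r(\partial\D)\subset N$ and $\zeta_0$ could a priori be interior — but note $\Delta_r(\zeta_0)$ need not lie on $N$, so I must instead arrange the zero to be \emph{on the boundary}. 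The fix: apply Rouché not on $\D$ but observe that $\varphi\circ\Delta_r$ vanishes only at $0$ on $\overline{\D}$ for $r$ small (its leading term is $c\zeta^\ell$), pick $\zeta_1\in\partial\D$ with $\Delta_r(\zeta_1)=q_1\in N$, and apply Rouché on a tiny circle of radius $\rho$ around $\zeta_1$ where we instead perturb the boundary of the disc: replace $\Delta_r$ by the family $\Delta_{r,t}(\zeta)=\Delta_r(e^{it}\zeta)$, each with boundary on $N$, sweeping the zero around; since $\varphi\circ\Delta_{r,t}$ has its unique zero at $\zeta=0$ and $\Delta_{r,t}(0)=p\in N$, and $\psi\circ\Delta_{r,t}$ has a zero $\zeta_0(t)$ near $0$ depending continuously on $t$, the point $\Delta_{r,t}(\zeta_0(t))$ traces a small loop around $p$ in $\C^n$; as $t$ ranges over $[0,2\pi]$ one checks this loop is nontrivial enough to force $\Delta_{r,t}(\zeta_0(t))\in N$ for some $t$ by a degree/dimension count using that $N$ has real codimension $n-d\le n-1$ and $p\in N$. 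The main obstacle is precisely this last point — guaranteeing the perturbed zero lands \emph{on} $N$ rather than merely near it — and I expect the clean way is a continuity/degree argument (or a foliation of a neighborhood of $p$ by such boundaries-of-discs) rather than a single Rouché application; handling the case $\ell>1$ and the case where $v$ has a nonzero $w$-component requires care but is routine once the disc is constructed.
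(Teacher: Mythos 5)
The first two-thirds of your argument — constructing a small analytic disc attached to $N$ through $p$ with $z$-component $r\zeta v'$ and a Bishop correction that is $O(r^2)$ in the $w$-directions, expanding $\varphi\circ\Delta_r(\zeta)=c\,r^\ell\zeta^\ell+o(r^\ell)$, and applying Rouch\'e on $\partial\D$ — is essentially the paper's Lemma on discs, and that part is sound. (One small point: in normal coordinates $H_pN$ is $\{w=0\}$, so $v$ never has a $w$-component; the case you flag as ``requiring care'' does not arise.)

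The genuine gap is exactly where you say it is, and your proposed fix does not work. Rouch\'e gives a zero $\zeta_0$ of $\psi\circ\Delta_r$ in the \emph{open} disc, so $\Delta_r(\zeta_0)$ need not lie on $N$. Your repair replaces $\Delta_r$ by $\Delta_{r,t}(\zeta)=\Delta_r(e^{it}\zeta)$, but this is only a reparametrization: the zeros of $\psi\circ\Delta_{r,t}$ are $e^{-it}$ times the zeros of $\psi\circ\Delta_r$, so $\Delta_{r,t}(\zeta_0(t))=\Delta_r(e^{it}\zeta_0(t))$ is \emph{constant} in $t$ — it traces no loop at all, and the degree/dimension count has nothing to act on. (Even if it did trace a loop, a small loop in $\C^n$ has no reason to meet a submanifold of real codimension $k\ge 2$.)

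The missing idea is to use a one-parameter family of discs that \emph{shrinks to the point} $p$: a continuous family $\Delta_t$, $t\in[0,t_0]$, attached to $N$, with $\Delta_0\equiv p$ and $\Delta_{t_0}$ your disc $\Delta_r$. Choose $\epsilon$ by Rouch\'e on $\Delta_{t_0}$ so that $\psi\circ\Delta_{t_0}$ has a zero in $\D$. If $\psi(p)\ne 0$, then $\psi\circ\Delta_0$ is a nonzero constant, so it has no zeros; since the number of zeros of $\psi\circ\Delta_t$ in $\D$ is locally constant in $t$ as long as no zero sits on $\partial\D$, there must be some intermediate $t$ at which $\psi\circ\Delta_t$ vanishes at a point of $\partial\D$ — and $\Delta_t(\partial\D)\subset N$, which is what you need. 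This continuity-of-zeros argument across the shrinking family (together with the dichotomy ``either $\psi(p)=0$ or a zero escapes through the boundary'') is the step your proposal lacks, and without it the conclusion does not follow.
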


To prove this lemma, we find a continuous family of analytic discs attached to $N$
shrinking to $p$ such that one of them intersects $Z_{\varphi}$. Let $\alpha \in (0,1)$. By a
$C^{1,\alpha}$ \emph{analytic disc} $\Delta \colon \overline{\D} \to \C^n$ \emph{attached to}
$N$ we mean that $\Delta$ is $C^{1,\alpha}$, is holomorphic in $\D$, and satisfies
$\Delta(\partial \D) \subset N$.  By a \emph{continuous family} $\Delta_t$ for $t
\in [0,1]$ we mean that $\Delta_t(\xi)$ is continuous as a function of
$(\xi,t) \in \overline{\D} \times [0,1]$.
Below $\alpha \in (0,1)$ is arbitrary but fixed.

\begin{lemma} \label{lemma:discs}
Suppose $N \subset \C^n$ is a connected real-analytic generic submanifold, $p\in N$,
 $\varphi$ is a holomorphic function defined in a neighborhood of $p$, and
$\varphi(p)=0$.
Suppose
\begin{equation}
H_p N \not\subset C_p Z_\varphi .
\end{equation}
Then there exists a continuous family of $C^{1,\alpha}$  analytic discs $\Delta_t
\colon \overline\D \to \C^n$ attached to $N$, for $t \in [0,1]$, such that $\Delta_0 \equiv
p$, $\Delta_t$ is nonconstant for $t > 0$,
and $\varphi \circ \Delta_t$ has an isolated zero in $\D$ for all sufficiently small positive $t$. Thus, $\Delta_t(\D)$ intersects
$Z_\varphi$  for such $t$.
\end{lemma}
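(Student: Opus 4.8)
The plan is to construct the discs by solving Bishop's equation, arranging that each $\Delta_t$ leaves $p$ in a complex tangent direction $v_0$ with $v_0 \in H_p N$ but $v_0 \notin C_p Z_\varphi$, and then to detect the forced intersection with $Z_\varphi$ by applying Rouch\'e's theorem to the one-variable holomorphic function $\varphi\circ\Delta_t$. After translating $p$ to $0$ and a complex-linear change of coordinates, I would write $\C^n = \C^d\times\C^k$ ($d$ the CR dimension, $k$ the codimension, $n = d+k$) so that $T_0 N = \C^d\times\R^k$; then $N$ is near $0$ a graph $\Im w = \phi(z',\bar z',\Re w)$ with $\phi(0) = 0$, $d\phi(0) = 0$, and $H_0 N = \C^d\times\{0\}$. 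Expand $\varphi = P_\mu + P_{\mu+1} + \cdots$ into homogeneous parts at $0$ with $P_\mu \not\equiv 0$; since $\varphi(0)=0$ we have $\mu \geq 1$, and (as noted in the discussion preceding Lemma~\ref{lemma:main}) the hypothesis forces $d \geq 1$. Because $C_0 Z_\varphi = \{P_\mu = 0\}$, the hypothesis $H_0 N \not\subset C_0 Z_\varphi$ gives a vector $v_0' \in \C^d\setminus\{0\}$ with $P_\mu(v_0) \neq 0$, where $v_0 := (v_0',0) \in H_0 N$.

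Next I would prescribe the ``free'' boundary component of the disc to be $z'_t(\zeta) := t\zeta v_0'$ for $\zeta \in \overline\D$ and solve Bishop's equation for the remaining component: for small $t \geq 0$ this yields a $C^{1,\alpha}$ map $w_t\colon\overline\D\to\C^k$, holomorphic in $\D$, with $(z'_t(\xi),w_t(\xi)) \in N$ for $\xi\in\partial\D$, with $w_0\equiv 0$, and depending continuously on $t$ --- the standard solvability and parameter-dependence theory for Bishop's equation (see, e.g., \cite{BER:book}). Put $\Delta_t := (z'_t,w_t)$; rescaling $t$ we may take $t \in [0,1]$. Then $\Delta_0 \equiv p$, and $\Delta_t$ is nonconstant for $t > 0$ since its first component is. Because $\phi$ vanishes to second order, Bishop's equation forces $\norm{w_t}_{C^{1,\alpha}(\overline\D)} = O(t^2)$, hence
\[
\Delta_t(\xi) = t\xi\, v_0 + \bigl(0,\, w_t(\xi)\bigr), \qquad \norm{w_t}_{\overline\D} = O(t^2),
\]
with the error term holomorphic in $\D$.

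Substituting into $\varphi = P_\mu + (\text{order} \geq \mu+1)$ and using homogeneity of $P_\mu$ then gives, for $\xi\in\overline\D$,
\[
\varphi\bigl(\Delta_t(\xi)\bigr) = t^\mu\, \xi^\mu\, P_\mu(v_0) + S_t(\xi), \qquad \norm{S_t}_{\overline\D} = O(t^{\mu+1}),
\]
with $S_t$ holomorphic in $\D$ and continuous on $\overline\D$ (and $\Delta_t(\overline\D)$ lies in the domain of $\varphi$ once $t$ is small, since $\Delta_t\to p$ uniformly). For $t$ small enough $\norm{S_t}_{\overline\D} < t^\mu\abs{P_\mu(v_0)}$, so on $\abs{\xi}=1$ we have $\abs{S_t(\xi)} < \abs{t^\mu\xi^\mu P_\mu(v_0)}$; by Rouch\'e's theorem $\varphi\circ\Delta_t$ then has exactly $\mu \geq 1$ zeros in $\D$, none on $\partial\D$, and is not identically $0$. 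Thus $\varphi\circ\Delta_t$ has an isolated zero in $\D$ for all small $t > 0$, so $\Delta_t(\D)$ meets $Z_\varphi$.

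The main obstacle is the disc construction: one must invoke Bishop's equation so as both to attach $\Delta_t$ to $N$ and to extract the precise leading behavior $\Delta_t(\xi) = t\xi v_0 + O(t^2)$ --- i.e., the quadratic control on the non-free component coming from $\phi = O(2)$; once this asymptotics is in hand, the Rouch\'e step is routine. Note that it is exactly the holomorphy of $\varphi$, which makes $\varphi\circ\Delta_t$ holomorphic in $\xi$, that permits Rouch\'e's theorem and is the reason the statement fails for merely real-analytic $\varphi$.
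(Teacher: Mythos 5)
Your proposal is correct and follows essentially the same route as the paper: Bishop discs with prescribed free component $t\xi v_0'$, the $O(t^2)$ control on the $w$-component, and Rouch\'e's theorem applied to $\varphi\circ\Delta_t$ after factoring out $t^\mu$ (the paper first reduces to CR dimension $1$ and writes the lowest homogeneous part as $P(z)+R(z,w)$ with $R(z,0)\equiv 0$, but this is cosmetic). The one claim you assert rather than prove---that $\norm{w_t}_{\overline{\D}}=O(t^2)$---is exactly where the paper does its work: it writes $w_t=t h_t$, uses that $\Im w_t = r(\cdot)=O(t^2)$ on $\partial\D$ together with the maximum principle and the normalization $\Re w_t(1)=0$ to conclude $h_0\equiv 0$, and this is the step you should spell out.
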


\begin{proof}
We will use the fact that the set of $C^{1,\alpha}$ analytic discs attached to $N$
is a smooth Banach submanifold; see the work of
Tumanov~\cite{Tumanov:ext} and Baouendi-Rothschild-Tr\'epreau~\cite{BRT}.  For reference we use the results as given in
Baouendi-Ebenfelt-Rothschild \cite{BER:book}.

Assume $p=0$, and suppose $N$ is given in coordinates $(z,w)
\in \C^d \times \C^k$ by $\Im w = r(z,\bar{z},\Re w)$ where $dr(0)=0$, so $H_0 N$
can be identified with $\{ w = 0 \}$.
After a linear change of coordinates that preserves $\{ w = 0 \}$,
we can assume that the $z_1$-axis, that is, the line given
by $w=0$ and $z_2=\cdots=z_d = 0$, does not lie in $C_0 Z_\varphi$.
By restricting to the $(z_1,w_1,\ldots,w_k)$-space, we can just assume that
the CR dimension of $N$ is exactly 1, that is, $d=1$ above.

By Theorem 6.5.4 in \cite{BER:book} (see also Theorem 6.2.12),
we can find a continuous
family $\Delta_t$ of $C^{1,\alpha}$ analytic discs attached to $N$
of the form $\Delta_t(\xi) = \bigl(t \xi, g_t(\xi)\bigr) \in \C \times \C^k$ such that
$\Re g_t(1)=0$ for all $t$ and $g_0 \equiv 0$ (the limiting disc is constant).
In fact, $\Delta_t$ and hence $g_t(\xi)$ can be chosen to be
smooth
as a function of $(t,\xi)$.
As $g_0 \equiv 0$,
$g_t(\xi) = t h_t(\xi)$ for some smooth $h$.
As $\Delta_t$
is attached to $N$, for $u \in \partial \D$
\begin{equation}
\Im g_t(u) = r\bigl(tu,t\bar{u},\Re g_t(u)\bigr)
 = r\bigl(tu,t\bar{u},t \Re th_t(u)\bigr)
= t^2\gamma_t(\xi)
\end{equation}
for some smooth $\gamma$.
So $\Im h_0 \equiv 0$ on $\partial \D$,
and consequently by the maximum principle
$\Im h_0 \equiv 0$ on $\overline{\D}$.  Hence, $\Re h_0$ must be
constant.  Given that $\Re g_t(1) = 0$ for all $t$, we have that
$\Re h_0(1) = 0$, and so $\Re h_0 \equiv 0$.  In other words,
$g_t(\xi) = t^2 q_t(\xi)$ for some smooth $q$ holomorphic in $\xi$.
That is, the $w$ in the analytic disc vanishes to second
order.
(Another, perhaps more straightforward but less elementary, way to
see this fact is to consider the tangent space at the constant disc of the
Banach submanifold of analytic discs attached to $N$.)

Write $\varphi$ as
\begin{equation}
\varphi(z,w) = P(z) + R(z,w) + E(z,w),
\end{equation}
where $P+R$ is the initial homogeneous polynomial
(consisting of the lowest-order terms of $\varphi$ at $0$)
and $R(z,0) \equiv 0$ (every monomial in $R$ depends on $w$).
The zero set of $P+R$ is the tangent cone of $Z_\varphi$ at $0$. 
As $C_0
Z_\varphi$ does not include $\{ w = 0 \}$, we must have that $P$ is not
identically zero; without loss of generality, $P(z) = z^\ell$.
Plug $\Delta_t$ into $\varphi$:
\begin{equation}
\varphi \circ \Delta_t (\xi)
=
t^\ell \xi^\ell + R\bigl(t\xi,t^2q_t(\xi) \bigr) + E\bigl(t\xi,t^2q_t(\xi) \bigr)
=
t^\ell \xi^\ell + t^{\ell+1} H(t,\xi) ,
\end{equation}
for some smooth $H$ holomorphic in $\xi$.
We divide by $t^{\ell}$ and apply Rouch\'e's theorem to find that
$\varphi \circ \Delta_t (\xi)$ must have a zero in $\D$ for all
small enough positive $t$.  In other words, the analytic disc $\Delta_t$ intersects
$Z_\varphi$ for all small enough positive $t$.
\end{proof}

\begin{proof}[Proof of Lemma~\ref{lemma:main}]
Apply Lemma~\ref{lemma:discs} to find the family of discs. Then for some $t_0>0$ the function $\varphi \circ \Delta_{t}$ has an
isolated zero in $\D$ when $0<t\leq t_0$.  Because the discs
shrink down to $p$, by making $t_0$ smaller we may assume in addition that $\Delta_t(\overline{\D}) \subset W$
for all $t \leq t_0$.

As $\Delta_{t_0}$ is nonconstant, by Rouch\'e's theorem 
there exists $\epsilon > 0$ such that, for every
$\psi$ holomorphic in $W$ with $\snorm{\varphi-\psi}_W < \epsilon$,
$\psi \circ \Delta_{t_0}$ has an isolated zero in $\D$.

Then either $\psi(p) = 0$, in which case we are done, or for some
$0 < t < t_0$ the function $\psi\circ\Delta_{t}$ has a zero on $\partial \D$ (again by Rouch\'e).  As $\Delta_{t}(\partial \D) \subset N$,
it follows that $\psi$ vanishes on $N \cap W$.
\end{proof}

\begin{example}
First, let us show that $\varphi$ being holomorphic is necessary.
Let $(z,w)$ be the coordinates on $\C^2$, and
consider $N \subset \C^2$  given by $\Im w = 0$.  The CR dimension of $N$ is 1,
and $H_0N$ can be identified with the complex line $w=0$.
Take $\varphi$ to be the (non-holomorphic) function defined by 
$\varphi(z,w) = \abs{z}^2$.  Let $X=Z_\varphi$ be the zero set of $\varphi$.
The manifold $X$ meets $N$ transversally, so perturbing $X$ as a submanifold
would not get rid of the intersection with $N$.  However, perturbing
$\varphi$ using $\psi(z,w) = \abs{z}^2+\epsilon$ for $\epsilon > 0$
gets rid of all of the zeros of $\varphi$ (whether they are on $N$ or not).
That is, $Z_\psi = \emptyset$.
\end{example}

\begin{example}
Another instructive example is to consider the case when the CR dimension of $N$ is
0.  Then $Z_\varphi$ clearly cannot be ``transverse'' in our sense as
$H_0N = \{ 0 \}$.  It may seem possible to replace the transversality
requirement
by having $T_0N$ and $C_0 Z_\varphi$ span $T_0\C^n$ (taking either the real
span or the complex span). But consider $N = \R^2 \subset \C^2$ and the holomorphic function
$\varphi(z,w) = z^2+w^2$.  The vectors in
$T_0N$ and $C_0 Z_\varphi$ span $T_0\C^n$ as a real vector space.
However, $\psi = \varphi+\epsilon$ has no zeros on $N$ for $\epsilon > 0$.
\end{example}

More generally, we can consider the zero set of a $\C^{\nu}$-valued mapping
$\varphi$.  First, if the derivative is of full rank, then
the desired result is simply the standard result on transversality.
The simple version of Lemma~\ref{lemma:main} is the obvious one.

\begin{prop} \label{prop:normaltransversality}
Suppose $U \subset \C^n$ is open,
$N \subset U$ is a connected real-analytic submanifold, $p\in N$,
$\varphi \colon U \to \C^\nu$ is a holomorphic mapping
such that $D\varphi$ is of rank $\nu$ in $U$, $\varphi(p) = 0$,
and as real submanifolds $Z_\varphi$ and $N$ are transverse at $p$, that is,
\begin{equation}
\dim_{\R} \operatorname{span}_{\R}
\bigl\{ T_p Z_{\varphi} , T_p N \bigr\}
= 2n ,
\qquad \text{or equivalently} , \qquad
\operatorname{span}_{\R}
\bigl\{ T_p Z_{\varphi} , T_p N \bigr\}
= T_p \C^n .
\end{equation}
Then
for any neighborhood $W$ of $p$ there exists  $\epsilon > 0$ such that,
whenever $\psi \colon W \to \C^{\nu}$ is holomorphic
and $\norm{\varphi-\psi}_{W} < \epsilon$, then $\psi$ vanishes somewhere on $N \cap W$.
\end{prop}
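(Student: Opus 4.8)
The plan is to reduce the proposition to the elementary topological fact that a continuous map that is $C^0$-close to the identity on a closed ball is surjective onto a slightly smaller concentric ball; this is the higher-dimensional analogue of Rouch\'e's theorem (invariance of a nonzero Brouwer degree under small perturbations) and plays here the role that Rouch\'e played in Lemma~\ref{lemma:main}. Unlike that lemma, no CR hypothesis on $N$ is needed, and holomorphy of $\psi$ is never used beyond its $C^0$-closeness to $\varphi$; the argument is purely differential-topological.

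First I would extract the geometric content of the hypotheses. If $\nu = 0$ the statement is vacuous, so assume $\nu \geq 1$. Since $D\varphi$ has complex rank $\nu$ throughout $U$, the zero set $Z_\varphi$ is a complex submanifold of complex codimension $\nu$, with $T_p Z_\varphi = \ker D\varphi_p$ of real dimension $2n - 2\nu$. Viewing $D\varphi_p$ as a surjective real-linear map $T_p \C^n \to \C^\nu$ with kernel $T_p Z_\varphi$ and applying it to the transversality identity $\operatorname{span}_\R \{ T_p Z_\varphi, T_p N \} = T_p \C^n$, we obtain $D\varphi_p(T_p N) = \C^\nu$. In other words, $\varphi|_N \colon N \to \C^\nu$ is a submersion at $p$ (regarding $\C^\nu$ as a real $2\nu$-dimensional manifold). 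This is the single place where the complex structure enters: holomorphy of $\varphi$ is exactly what promotes real transversality of $Z_\varphi$ and $N$ into a genuine submersion.

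Next I would pass to a slice. Choose a $2\nu$-dimensional $C^1$ submanifold $S \subset N$ through $p$ whose tangent space at $p$ is a complement to $\ker D(\varphi|_N)_p$ in $T_p N$; then $D(\varphi|_S)_p$ is invertible, so after shrinking $S$ the restriction $\varphi|_S$ is a diffeomorphism of $S$ onto an open neighborhood $V_0$ of $0$ in $\C^\nu \cong \R^{2\nu}$. Fix $r > 0$ with $\overline{B_r(0)} \subset V_0$ and $(\varphi|_S)^{-1}(\overline{B_r(0)}) \subset W$, and set $\epsilon = r$. Transporting everything to $\overline{B_r(0)}$ via the diffeomorphism $\varphi|_S$ turns $\varphi$ into the identity and turns any holomorphic $\psi$ on $W$ with $\snorm{\varphi - \psi}_W < \epsilon$ into a continuous map $\widetilde\psi \colon \overline{B_r(0)} \to \C^\nu$ satisfying $\sabs{\widetilde\psi(x) - x} < r$ for all $x \in \overline{B_r(0)}$.

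Finally I would run the degree argument. On $\partial B_r(0)$ we have $\sabs{\widetilde\psi(x) - x} < r = \sabs{x}$, so the straight-line homotopy $(t, x) \mapsto (1 - t)x + t\,\widetilde\psi(x)$ is nowhere zero on $\partial B_r(0)$; hence $\deg(\widetilde\psi, B_r(0), 0) = \deg(\id, B_r(0), 0) = 1 \neq 0$, and therefore $\widetilde\psi$ vanishes somewhere in $B_r(0)$. Pulling this zero back through $\varphi|_S$ produces a zero of $\psi$ in $(\varphi|_S)^{-1}(B_r(0)) \subset S \cap W \subset N \cap W$. The argument is short once set up, and there is no serious obstacle; the only care needed is the bookkeeping in the first two steps — confirming that the full-rank hypothesis is precisely what upgrades real transversality into a submersion of $\varphi|_N$, and choosing $S$ and $r$ so that the relevant closed ball sits inside the prescribed neighborhood $W$.
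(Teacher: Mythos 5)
Your proof is correct, but it takes a genuinely different route from the paper's. The paper's proof is two lines: by Cauchy's estimates, a $C^0$-small holomorphic perturbation $\psi$ is $C^1$-close to $\varphi$ on a slightly smaller neighborhood, so $D\psi$ still has rank $\nu$ there and $Z_\psi$ is a submanifold $C^1$-close to $Z_\varphi$; the standard stability of transverse intersections under $C^1$-perturbation then forces $Z_\psi$ to meet $N$ near $p$. You instead note that the full-rank hypothesis together with real transversality is exactly the statement that $\varphi|_N$ is a submersion at $p$, restrict to a $2\nu$-dimensional slice $S \subset N$ on which $\varphi$ is a diffeomorphism onto a ball, and conclude by invariance of the Brouwer degree under $C^0$-small perturbations (the higher-dimensional Rouch\'e argument). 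Each step checks out: the identity $D\varphi_p(T_pN) = \C^\nu$ does follow by pushing the spanning condition through $D\varphi_p$, which kills $T_pZ_\varphi$; the slice $S$ exists and $\varphi|_S$ is a local diffeomorphism by the inverse function theorem; and the boundary estimate $\sabs{\widetilde\psi(x)-x} < \sabs{x}$ on $\partial B_r(0)$ legitimately pins the degree at $1$. Your approach buys two things: it uses only the $C^0$-closeness of $\psi$ (so $\psi$ need not be holomorphic and no Cauchy estimates are needed), and it is self-contained modulo degree theory rather than citing transversality stability. The paper's version is shorter, reuses Cauchy's estimates already deployed elsewhere, and keeps the conclusion phrased in terms of the perturbed zero set $Z_\psi$, which is the form in which the proposition is invoked later (e.g., in Example~\ref{example:nonremovablesing}). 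Both arguments are complete.
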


\begin{proof}
We can apply the standard result on real transversality: Note that, as $D\varphi$ is
of rank $\nu$, for a small enough perturbation $\psi$ of $\varphi$,
by Cauchy's estimates the derivative $D\psi$
is still of rank $\nu$ on some slightly smaller neighborhood,
and $Z_\psi$ is a submanifold.
\end{proof}

We can also ask when we can generically perturb away the zero.  We will not
use the following proposition, although in Section \ref{section:nonequi} we will prove an analogous result
for removable CR singularities.

\begin{prop} \label{prop:genericallyperturbphi}
Suppose $U \subset \C^n$ is open,
$N \subset U$ is a connected real-analytic submanifold
of positive codimension, $p\in N$,
 $\varphi \colon U \to \C^\nu$ is a holomorphic mapping,
$\varphi(p) = 0$,  the
rank of $D\varphi$ is $\nu$ on a dense open subset of $U$, and
\begin{equation}
\nu\leq\frac{1}{2}\dim_{\R} N .
\end{equation}
Let $W \subset \subset U$ be a relatively compact neighborhood of $p$ and
$\delta > 0$.  Then there exists a holomorphic map $\psi \colon W \to \C^{\nu}$ such that
$\snorm{\varphi-\psi}_W < \delta$, $\psi(p) = 0$, and the zero set
of $\psi$ cannot be perturbed away near $p$.  That is,
for every $\epsilon > 0$ and every neighborhood $W' \subset W$ of $p$,
if $h \colon W' \to \C^\nu$ is holomorphic and $\snorm{h -\psi}_{W'} <
\epsilon$, then $h$ vanishes somewhere on $N \cap W'$.
\end{prop}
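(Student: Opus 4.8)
The plan is to perturb $\varphi$ so that at $p$ it acquires a zero whose lowest-order jet is ``transverse'' to $N$ in the sense of Lemma~\ref{lemma:main}, and then to invoke (a vector-valued version of) that lemma. Concretely, I would first reduce to the scalar case $\nu = 1$: since $N$ has real dimension $\geq 2\nu$, its complex tangent space $H_pN$ has complex dimension $\geq \nu$ (using that $N$ is generic, or at least CR of large enough CR dimension after shrinking — one should be slightly careful here, but the hypothesis $\nu \le \tfrac12 \dim_\R N$ is exactly what makes the bookkeeping work). Pick complex-linear coordinates $(z_1,\dots,z_d,w)$ on $\C^n$ with $H_pN \supset \{\text{span of }\partial_{z_1},\dots,\partial_{z_d}\}$ at $p=0$, with $d \ge \nu$. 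The idea is to replace $\varphi = (\varphi_1,\dots,\varphi_\nu)$ by $\psi = (\psi_1,\dots,\psi_\nu)$ where $\psi_j(z,w) = \varphi_j(z,w) - \varphi_j(0) + \eta\, z_j^{\,\ell_j}$ for suitable small $\eta$ and large enough integers $\ell_j$, chosen so that: (a) $\snorm{\varphi-\psi}_W < \delta$ (this forces $\eta$ small and $\ell_j$ large, since $|z_j| \le \operatorname{diam} W$ can be made $<1$); (b) $\psi(0) = 0$; and (c) the initial homogeneous part of each $\psi_j$ at $0$ either is $\eta z_j^{\ell_j}$ outright or, if $\varphi_j$ already has lower-order terms, at least does not make $H_0 N$ sit inside the common zero cone.

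The key step is then to run the disc argument of Lemma~\ref{lemma:discs} for the vector-valued map. As in that proof, shrink to CR dimension $\nu$ by restricting to the $(z_1,\dots,z_\nu,w)$-subspace through $p$, producing a continuous family of $C^{1,\alpha}$ analytic discs $\Delta_t(\xi) = (t\xi_1,\dots,t\xi_\nu, g_t(\xi))$ attached to $N$ with $g_t$ vanishing to second order in $t$ — here one needs the multidimensional version of \cite{BER:book}*{Theorem 6.5.4}, which furnishes discs with prescribed $z$-components and constant limiting disc; this is standard but worth a citation. Composing, $\psi_j \circ \Delta_t(\xi) = \eta\, t^{\ell_j}\xi_j^{\ell_j} + (\text{higher order in } t)$ after subtracting $\varphi_j(0)$ and absorbing the perturbed-off constant; normalizing by $t^{\ell_j}$ in each slot and applying the multidimensional Rouché/argument-principle (or a degree-theoretic) argument to the map $\xi \mapsto (\xi_1^{\ell_1},\dots,\xi_\nu^{\ell_\nu})$ on a polydisc shows $\psi \circ \Delta_t$ has a zero in the polydisc for all small $t>0$. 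Since the discs shrink to $p$, their boundaries lie on $N$ near $p$, so a small perturbation $h$ of $\psi$ still has $h\circ\Delta_{t_0}$ with a zero in the polydisc; as in the proof of Lemma~\ref{lemma:main}, pushing $t$ down forces either $h(p)=0$ or a boundary zero, hence a zero of $h$ on $N \cap W'$.

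The main obstacle I anticipate is step (c) combined with the multidimensional disc construction: one must ensure the \emph{simultaneous} transversality of the $\nu$ initial jets to $H_pN$, and the analytic discs must have enough free $z$-directions ($d \ge \nu$) to detect all $\nu$ components at once. The inequality $\nu \le \tfrac12 \dim_\R N$ is precisely what guarantees $\dim_\C H_pN \ge \nu$ after the reduction, but verifying the Rouché-type conclusion for the vector-valued composition — essentially that a small holomorphic perturbation of $\xi \mapsto (\eta t^{\ell_1}\xi_1^{\ell_1},\dots)$ on a polydisc still vanishes — requires a genuine multidimensional argument principle (Kronecker index / topological degree of the boundary map $S^{2\nu-1}\to\C^\nu\setminus\{0\}$), rather than the one-variable Rouché used in Lemma~\ref{lemma:discs}. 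I would handle this by noting the boundary map $\partial(\D^\nu) \ni \xi \mapsto (\xi_1^{\ell_1},\dots,\xi_\nu^{\ell_\nu})$ has nonzero degree $\prod \ell_j$ onto $\C^\nu \setminus \{0\}$ restricted to the appropriate torus, which is stable under small perturbation; the genericity/density in the statement (``there exists $\psi$'') then follows because the perturbations $\eta z_j^{\ell_j}$ form a nonempty (in fact dense, as $\eta \to 0$, $\ell_j \to \infty$) family of admissible choices, and the rank-$\nu$-on-a-dense-set hypothesis guarantees $\psi$ can be taken with this property while keeping $D\psi$ generically of rank $\nu$.
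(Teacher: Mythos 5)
Your approach has two genuine gaps, and the first one is fatal to the strategy rather than just to a step. The proposition does not assume that $N$ is generic, or even CR: it is an arbitrary connected real-analytic submanifold of positive codimension. Your entire plan hinges on $H_pN$ having complex dimension at least $\nu$, but the hypothesis $\nu \le \tfrac{1}{2}\dim_\R N$ gives no such thing --- take $N=\R^2\subset\C^2$ and $\nu=1$, where $\dim_\R N=2$ but $H_pN=\{0\}$. The paper's own example with $\varphi=z^2+w^2$ on $N=\R^2$ shows that no disc-type argument applies there, and since $H_pN=\{0\}$ no perturbation of $\varphi$ can restore the transversality condition of Lemma~\ref{lemma:main}; yet the proposition is still true in that case. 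The intended proof is much softer: since $D\varphi$ has rank $\nu$ on a dense open set, the nearby map $\psi(z)=\varphi(Az+z_0)+c$ (with $A$ a unitary near $I$, and $z_0$, $c$ small) can be arranged so that $Z_\psi$ is a smooth complex submanifold of complex dimension $n-\nu$ passing through $p$ and meeting $N$ transversally as \emph{real} submanifolds --- the inequality $\nu\le\tfrac{1}{2}\dim_\R N$ is exactly the dimension count that makes real transversality possible. One then concludes by Proposition~\ref{prop:normaltransversality}, which is ordinary real transversality and needs no CR structure on $N$ at all.

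The second gap is the polydisc machinery. Analytic discs attached to $N$ in the sense of Tumanov, Baouendi--Rothschild--Tr\'epreau, and \cite{BER:book} are maps from $\overline{\D}\subset\C$; there is no off-the-shelf ``multidimensional version of Theorem 6.5.4'' producing families of analytic polydiscs $\D^\nu\to\C^n$ attached to $N$, and it is unclear along which part of $\partial(\D^\nu)$ such a polydisc would be attached. This breaks your final step: in the degree argument, as $t$ decreases a zero of $h\circ\Delta_t$ can exit through a face of $\partial(\D^\nu)$ that is not the distinguished boundary, hence is not mapped into $N$, and you obtain no zero of $h$ on $N\cap W'$. So even granting enough CR dimension, the Rouch\'e/degree conclusion does not close.
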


\begin{proof}
At points where the rank of $D\varphi$ is $\nu$, the level sets of $\varphi$
are complex submanifolds of complex dimension $n-\nu$.
As $\nu$ is at most
$\frac{1}{2}\dim_{\R} N$ by hypothesis, at such points
the real codimension of a level set of $\varphi$ is at most $\dim_{\R} N$,
and the real codimension of $N$ is of course $2n-\dim_{\R} N$.
Thus, if we consider the function
\begin{equation}
\psi(z) = \varphi(Az+z_0)+c
\end{equation}
for a unitary $A$ near the identity $I$, small $z_0 \in \C^n$, and small
$c \in \C^{\nu}$, we can ensure that $Z_\psi$ is a complex submanifold
of complex dimension $n-\nu$ through $p$ such that the manifolds $Z_\psi$
and $N$ are transverse as real submanifolds.
We can do this with arbitrarily small $\snorm{A-I}$, $z_0$, and $c$, and hence we can do
so on any relatively compact $W$.
We finish by applying Proposition~\ref{prop:normaltransversality}.
\end{proof}


\section{CR images}

In the following, we consider a holomorphic map $F \colon U \to \C^m$, where
$U \subset \C^n$ is open, and we assume that $N \subset U$ is a real-analytic generic submanifold.
We suppose that $F|_N$ is a diffeomorphism onto its image (as a real-analytic
mapping), and we write $M = F(N)$ for this image.  As noted in the
introduction, any CR image $M$ can be written (locally) in this way.
We suppose that the holomorphic derivative $DF$ is generically of rank $n$.  By this 
we mean that it is of rank $n$ on an open dense subset of $U$, or equivalently (as
$N$ is generic) on a relatively open dense subset of $N$.  As we will see, this requirement ensures that
the CR dimension of $M_{CR}$ is the same as the CR dimension of $N$.

\begin{example}
Let us see what happens if $DF$ is not of rank $n$ generically.  Let $(z,w)$ be coordinates on $\C^2$. Consider
$N = \{z=\bar{w} \}$ and $F(z,w) = (z,0)$.  The CR dimension of $N$ is
$0$, and the CR dimension of $F(N)$ is $1$.
\end{example}

The condition that $DF$
is generically of rank $n$ means that $N$ is the realization of  $M$
with the CR structure extended across the CR singularities.

\begin{prop} \label{prop:locCRsing}
Suppose $N \subset \C^n$ is a connected real-analytic generic submanifold,
$U \subset \C^n$ is
a neighborhood of $N$, $F \colon U \to \C^m$ is
a holomorphic map such that $F|_N$ is a diffeomorphism onto
its image $M=F(N)$, and $DF$ is generically of rank $n$.
Then:
\begin{enumerate}[(i)]
\item
The CR dimension of $M_{CR}$ is the same as the CR dimension of $N$.
\item
Let $p \in N$.  
The submanifold $M$ is CR singular at $F(p)$ if and only if
$DF$ is not of rank $n$ at $p$.
\end{enumerate}
\end{prop}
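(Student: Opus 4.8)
The plan is to prove (i) together with the easy direction of (ii) — that $DF$ of rank $n$ at $p$ forces $F(p)$ to be a CR point — by a local-biholomorphism argument, and then to prove the reverse direction of (ii) by a tangent-space dimension count that uses the genericity of $N$. Throughout I would set $d$ to be the CR dimension of $N$ and $k=n-d$ its real codimension (recall $N$ is generic), so that $\dim_{\R} N = 2n-k = n+d$; since $F|_N$ is a diffeomorphism onto $M$, also $\dim_{\R} M = n+d$, and $M = F(N)$ is connected.

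For the first part, if $DF$ has rank $n$ at $p$ then $F$ is a holomorphic immersion near $p$, hence a biholomorphism of a neighborhood of $p$ onto an $n$-dimensional complex submanifold $S \subset \C^m$ through $q = F(p)$. A biholomorphism carries a CR submanifold to a CR submanifold of the same CR dimension, so $M = F(N)$ is, near $q$, a CR submanifold (of $S$, hence of $\C^m$) of CR dimension $d$; in particular $q \in M_{CR}$. Because $DF$ is of rank $n$ on a relatively open dense subset of $N$ and $F|_N$ is a homeomorphism onto $M$, this exhibits an open dense subset of $M$ that is CR of CR dimension $d$; since $M$ is connected, the CR dimension of $M_{CR}$ is constant and therefore equals $d$, which is (i). In particular every $q \in M_{CR}$ has $\dim_{\C} T^{0,1}_q M = d$.

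For the converse direction of (ii), I would take $q = F(p) \in M_{CR}$ and use that, by (i), $M$ is CR of CR dimension $d$ near $q$. Its local intrinsic complexification $M^{\C}$ is then a complex submanifold of $\C^m$ with $\dim_{\C} M^{\C} = \dim_{\R} M - d = n$, and $M$ is generic in $M^{\C}$, i.e.\ $T_q M + i\, T_q M = T_q M^{\C}$, a real subspace of $T_q \C^m$ of dimension $2n$. Since $F$ is holomorphic, $DF \colon T_p \C^n \to T_q \C^m$ is complex linear; since $F|_N$ is a diffeomorphism onto $M$, $DF(T_p N) = T_q M$; and since $N$ is generic, $T_p N + i\, T_p N = T_p \C^n$. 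Hence
\[
DF(T_p \C^n) = DF(T_p N) + i\, DF(T_p N) = T_q M + i\, T_q M = T_q M^{\C},
\]
which has real dimension $2n$, so $DF$ has (complex) rank $n$ at $p$. Combining this with the first part gives (ii): $F(p)$ is CR singular if and only if $DF$ is not of rank $n$ at $p$.

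The routine but essential bookkeeping is the identity $\dim_{\R} M = \dim_{\R} N = n+d$, which is what makes $M^{\C}$ come out $n$-dimensional, together with the standard fact that a CR submanifold is generic in its intrinsic complexification. I expect the only genuinely substantive point to be the final display: genericity of $N$ forces $DF(T_p N)$ to complex-span the image of $DF$, so once $F(p)$ is known to be a CR point the image is pinned to the $n$-dimensional space $T_q M^{\C}$ and no rank can be lost — this is the mechanism that prevents a rank drop from hiding behind a CR point.
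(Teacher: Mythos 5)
Your proof is correct. Part (i) and the easy direction of (ii) follow the paper exactly: rank $n$ at $p$ makes $F$ a local biholomorphism onto an $n$-dimensional complex submanifold, so $F(p)$ is a CR point of CR dimension $d$, and density of the full-rank locus plus connectedness of $M$ gives (i). Your converse, however, takes a genuinely different route. The paper argues by extending the inverse of $F|_N$ (a real-analytic CR diffeomorphism near a CR point $q$) to a biholomorphism of a neighborhood of $q$ in the intrinsic complexification of $M$, and then uses uniqueness of holomorphic extension from a generic submanifold to force this extension to be an inverse of $F$, whence $DF$ has rank $n$. You instead do a direct tangent-space computation: complex linearity of $DF_p$ and genericity of $N$ give $DF_p(T_p\C^n)=DF_p(T_pN)+i\,DF_p(T_pN)=T_qM+i\,T_qM$, and the CR-point hypothesis together with (i) pins $\dim_{\R}(T_qM+i\,T_qM)$ at $2(n+d)-2d=2n$, so $DF_p$ is injective. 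Your argument is more elementary — it avoids the extension theorem for real-analytic CR maps entirely and needs only linear algebra at the single point $p$ — while the paper's version yields the stronger conclusion, used elsewhere in the paper, that $F$ is actually a biholomorphism onto the intrinsic complexification of $M$ near $p$. One small remark: your detour through the intrinsic complexification $M^{\C}$ in the converse is not needed; the identity $\dim_{\R}(T_qM+i\,T_qM)=2\dim_{\R}T_qM-\dim_{\R}(T_qM\cap i\,T_qM)=2(n+d)-2d$ already does the job.
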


In particular, note that if $N$ is a hypersurface, then $DF$ is of 
rank $n$ on $N$ if $F|_N$ is a diffeomorphism.  For any $p \in N$, the space $T_p^{0,1}N$ has dimension $n-1$, 
so $T_{F(p)}^{0,1} M$ has dimension at least $n-1$ as well
(e.g., \cite{crext4}*{Proposition 2.6}).
There is only one more real dimension left in $T_{F(p)}M$, so
the dimension of $T_{F(p)}^{0,1} M$ must be $n-1$.  In other words, the
inverse of $F|_N$ on $M$ is a CR diffeomorphism, and $F$ is actually
biholomorphic onto the intrinsic complexification of $M$.
To get interesting CR images, then, one must look at submanifolds of higher codimension.

\begin{proof}
Suppose that $DF$ is of rank $n$ at $p \in N$.  Then locally $F$ takes a
neighborhood $W$ of $p$ in $\C^n$ biholomorphically onto an $n$-dimensional
submanifold of $\C^m$.  Thus $M=F(N)$ near $F(p)$ is a CR submanifold of
the same CR dimension as $N$. Hence, the image $F(W)$ is the intrinsic
complexification of $M$ near $p$.

As $DF$ is generically of rank $n$ in $U$,
it is of rank $n$ on an open dense subset of $N$ because $N$ is generic.
The set $M_{CR}$ is open and
dense in $M$, so the CR dimension of $M_{CR}$ is equal to the CR
dimension of $N$.

Conversely, suppose that $M$ is CR at $F(p)=q$.  The inverse of $F|_N$ near $q$
is a CR diffeomorphism and has a unique extension as a biholomorphism to a
neighborhood of $q$ in the intrinsic complexification of $M$ near $q$.
But the inverse of this biholomorphism must agree with $F$ near $p$,
and hence $F$ near $p$ has rank $n$.
\end{proof}

In the equidimensional case $m=n$, we can relate the CR dimensions
using the transversality condition we introduced.

\begin{prop} \label{prop:locCRsingequitrans}
Suppose $N \subset \C^n$ is a connected real-analytic generic submanifold,
$U \subset \C^n$ is
a neighborhood of $N$, $F \colon U \to \C^n$ is
a holomorphic map such that $F|_N$ is a diffeomorphism onto
its image $M=F(N)$, $\det DF$ vanishes at some $p \in N$, and
\begin{equation}
H_p N \not\subset
C_p Z_{\det DF} .
\end{equation}
Then:
\begin{enumerate}[(i)]
\item
The CR dimension of $M_{CR}$ is the same as the CR dimension of $N$.
\item
Let $p' \in N$.  
The submanifold $M$ is CR singular at $F(p')$ if and only if
$\det DF$ vanishes at $p'$.
\end{enumerate}
\end{prop}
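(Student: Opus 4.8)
The plan is to obtain this as a corollary of Proposition~\ref{prop:locCRsing}. Once we know that $DF$ is generically of rank $n$, that proposition applies and delivers both conclusions; and in the equidimensional case $m=n$ the statement ``$DF$ is not of rank $n$ at $p'$'' is literally ``$\det DF(p')=0$'', since $DF(p')$ is an $n\times n$ matrix. So essentially all that needs proof is the implication that the transversality hypothesis $H_p N \not\subset C_p Z_{\det DF}$ forces $DF$ to have rank $n$ on a relatively open dense subset of $N$.

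The first step, then, is to observe that this is precisely the remark recorded just before Lemma~\ref{lemma:main}, applied to the holomorphic function $\varphi = \det DF$: the hypothesis $H_p N \not\subset C_p Z_{\det DF}$ implies that $\det DF$ does not vanish identically on $N$. The short reason is that if $\det DF$ vanished on $N$ near $p$, then genericity of $N$ (its intrinsic complexification near $p$ is all of $\C^n$) would force $\det DF \equiv 0$ on a full neighborhood of $p$ in $\C^n$, so that $C_p Z_{\det DF} = T_p\C^n \supseteq H_p N$, a contradiction. Consequently $\det DF$ restricts to a real-analytic function on the connected manifold $N$ that is not identically zero; its zero set is therefore a nowhere dense real-analytic subvariety of $N$, and $DF$ has rank $n$ on the complementary relatively open dense set. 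This is exactly the ``generically of rank $n$'' hypothesis of Proposition~\ref{prop:locCRsing} (in the formulation on $N$, which the text notes is equivalent to the one on $U$ by genericity).

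The second step is to invoke Proposition~\ref{prop:locCRsing} with $m=n$. Conclusion (i), that the CR dimension of $M_{CR}$ equals the CR dimension of $N$, transfers verbatim. For conclusion (ii), fix $p' \in N$; since $DF(p')$ is a square matrix, $DF$ fails to have rank $n$ at $p'$ if and only if $\det DF(p')=0$, and so Proposition~\ref{prop:locCRsing}(ii) gives that $M$ is CR singular at $F(p')$ if and only if $\det DF(p')=0$.

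I do not anticipate a genuine obstacle: this proposition is a corollary, and all of the real difficulty has already been handled in Lemma~\ref{lemma:main} and the discussion preceding it. The only point requiring a moment of care is the passage from ``$\det DF$ vanishes on $N$'' to ``$\det DF$ vanishes on a full complex neighborhood of $p$'', which is the standard consequence of $N$ being generic.
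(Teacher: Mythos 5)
Your proposal is correct and follows essentially the same route as the paper: the paper's proof likewise observes that $H_p N \not\subset C_p Z_{\det DF}$ forces $\det DF$ not to vanish identically on $N$, hence $DF$ is generically of rank $n$, and then cites Proposition~\ref{prop:locCRsing}. Your extra justification of the key step via genericity (vanishing on $N$ would force $\det DF\equiv 0$ on a neighborhood, making the tangent cone all of $T_p\C^n$) is exactly the remark the paper records before Lemma~\ref{lemma:main}.
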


\begin{proof}
As $H_p N \not\subset
C_p Z_{\det DF}$ implies that $\det DF$ does not vanish identically on $N$,
$DF$ must be of rank $n$ generically on $N$.  Therefore,
the conclusions follow from Proposition~\ref{prop:locCRsing}.
\end{proof}


\section{Perturbing away singularities in equidimensional case}
\label{sec:equidim}

The first question we ask is whether we can perturb $F$ to eliminate a CR
singularity.  We first prove that generically we cannot when $m=n$ and when the
CR dimension of $N$ is positive. We do so by finding a geometric obstruction to this
perturbation.

\begin{thm} \label{thm:noperturbmapequi}
Suppose $N \subset \C^n$ is a connected real-analytic generic submanifold,
$U \subset \C^n$ is
a neighborhood of $N$, $F \colon U \to \C^n$ is
a holomorphic map such that $F|_N$ is a diffeomorphism onto
its image $M=F(N)$, and $\det DF$ vanishes at some $p \in N$.
Suppose 
\begin{equation}
H_p N \not\subset
C_p Z_{\det DF} .
\end{equation}

Then for every neighborhood $W$ of $p$ there exist $\epsilon > 0$
and a neighborhood $\widetilde{W} \subset W$ of $p$
such that,
whenever $G \colon W \to \C^n$ is holomorphic
and $\norm{F-G}_{W} < \epsilon$, then
$G|_{N \cap \widetilde{W}}$ is a diffeomorphism onto its image,
and $\det DG$ vanishes somewhere on $N \cap \widetilde{W}$, but not identically on $N \cap \widetilde{W}$.  In particular,
$G(N\cap \widetilde{W})$ is a CR singular submanifold,
and the CR dimensions of $N$ and 
$\left(G(N\cap \widetilde{W})\right)_{CR}$ (and $M_{CR}$) are equal.
\end{thm}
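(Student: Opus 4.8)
The plan is to derive Theorem~\ref{thm:noperturbmapequi} directly from Lemma~\ref{lemma:main} applied to the holomorphic function $\varphi = \det DF$, together with elementary stability facts about diffeomorphisms and about the transversality hypothesis. First I would observe that the hypotheses of the theorem are precisely those of Lemma~\ref{lemma:main} with $\varphi = \det DF$: $N$ is a connected real-analytic generic submanifold, $\det DF$ is holomorphic near $p$, $(\det DF)(p)=0$, and $H_pN\not\subset C_pZ_{\det DF}$. As remarked in the text, this last condition forces $\det DF$ to not vanish identically on $N$, so $DF$ is generically of rank $n$ on $N$, and hence Proposition~\ref{prop:locCRsing} (equivalently Proposition~\ref{prop:locCRsingequitrans}) applies to $F$. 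The core of the argument is then: given a neighborhood $W$ of $p$, apply Lemma~\ref{lemma:main} to $\varphi = \det DF$ to obtain $\epsilon_0>0$ such that any holomorphic $\psi$ on $W$ with $\norm{\det DF - \psi}_W < \epsilon_0$ vanishes somewhere on $N\cap W$. The key point is that if $G$ is close to $F$ in sup norm on $W$, then by Cauchy's estimates $DG$ is close to $DF$ on a slightly smaller neighborhood, so $\det DG$ is close to $\det DF$ there; choosing $\epsilon$ small enough (depending on $W$) makes $\norm{\det DF - \det DG}_{W'} < \epsilon_0$ on a suitable $W' \subset W$, whence $\det DG$ vanishes somewhere on $N\cap W'$.

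Next I would handle the auxiliary conclusions. To get that $G|_{N\cap\widetilde W}$ is a diffeomorphism onto its image for $G$ near $F$: since $F|_N$ is a real-analytic diffeomorphism onto $M$, its real Jacobian (as a map of real manifolds) has full rank $\dim_\R N$ at $p$; shrinking to a relatively compact $\widetilde W \ni p$ with $\overline{\widetilde W}$ compact and contained in $W$, the real derivative of $F|_N$ is bounded below in rank there, and by Cauchy's estimates the real derivative of $G|_N$ stays of full rank on $N\cap\widetilde W$ for $G$ sufficiently close to $F$, so $G|_{N\cap\widetilde W}$ is an immersion; further shrinking $\widetilde W$ (using that $F|_N$ is injective on a neighborhood of $p$ and a standard openness argument) makes $G|_{N\cap\widetilde W}$ injective as well, hence a diffeomorphism onto its image. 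For "not identically zero on $N\cap\widetilde W$": the transversality hypothesis $H_pN\not\subset C_pZ_{\det DF}$ is a condition only on the lowest-order jet of $\det DF$ at $p$ and on the (fixed) $N$, and it is an \emph{open} condition — more directly, I would note that for $G$ close enough to $F$ the point $p$ can be perturbed within $N$ to a point $p''$ near $p$ where $\det DF(p'')\neq 0$ (since $\det DF\not\equiv 0$ on $N$), and by continuity $\det DG(p'')\neq 0$ for $G$ near $F$; so $\det DG\not\equiv 0$ on $N\cap\widetilde W$.

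Finally, with $G|_{N\cap\widetilde W}$ a diffeomorphism onto its image, $DG$ generically of rank $n$ on $N\cap\widetilde W$ (because $\det DG\not\equiv 0$ there), and $\det DG$ vanishing at some point of $N\cap\widetilde W$, Proposition~\ref{prop:locCRsing} applied to $G$ in place of $F$ gives simultaneously that $G(N\cap\widetilde W)$ is CR singular (at the image of the zero of $\det DG$) and that the CR dimension of $\bigl(G(N\cap\widetilde W)\bigr)_{CR}$ equals the CR dimension of $N$; the same proposition applied to $F$ gives that $M_{CR}$ has that CR dimension too. I expect the main obstacle to be bookkeeping the nested shrinking of neighborhoods — one must choose $\widetilde W$, the intermediate $W'$ for Cauchy's estimates, and the final $\epsilon$ in a consistent order so that all three conclusions (diffeomorphism, vanishing somewhere, not vanishing identically) hold for the same $\epsilon$ and $\widetilde W$; none of the individual steps is deep, but the quantifier order "for every $W$ there exist $\epsilon$ and $\widetilde W$" has to be respected carefully. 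Everything else reduces to Lemma~\ref{lemma:main}, Cauchy's estimates, and Proposition~\ref{prop:locCRsing}.
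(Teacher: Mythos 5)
Your proposal is correct and follows essentially the same route as the paper: apply Lemma~\ref{lemma:main} to $\varphi=\det DF$, transfer closeness of $G$ to $F$ into closeness of $\det DG$ to $\det DF$ on a smaller neighborhood via Cauchy's estimates, note that the transversality hypothesis forces $\det DF\not\equiv 0$ on $N$ (hence $\det DG\not\equiv 0$ for small $\epsilon$), and invoke Proposition~\ref{prop:locCRsingequitrans} for the CR-dimension statements. The only cosmetic difference is that the lemma should be applied on the shrunken neighborhood $\widetilde W$ itself (so that its $\epsilon_0$ matches the set on which you verify vanishing), which is exactly the bookkeeping you already flagged.
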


That the CR dimension of $N$ and $M_{CR}$ are equal follows from
Proposition~\ref{prop:locCRsingequitrans}.

\begin{proof}
The proof is simply an application of Lemma~\ref{lemma:main}, using Cauchy's estimates to get a bound for $\norm{\det F-\det G}_{\widetilde{W}}$.  The neighborhood $\widetilde{W}$
may be chosen small enough
so that any such $G|_{N \cap \widetilde{W}}$ is a diffeomorphism.
Note that $H_p N \not\subset
C_p Z_{\det DF}$ implies that $\det DF$ does not vanish identically on $N$,
so neither will $\det DG$ for small enough $\epsilon$
by Cauchy's estimates. Then the conclusion on CR dimensions follows
as in the proof of Proposition~\ref{prop:locCRsingequitrans}.
\end{proof}

\begin{example}
The transversality condition in the theorem is not a necessary condition, only sufficient:
Let $N=\R^2 \subset \C^2$, and let $F(x,y) = (x+iy,x^2+y^2)$.  The image
$F(N)$ is the elliptic Bishop surface $w = \sabs{z}^2$.  It is well known
that an elliptic CR singularity is stable under perturbation, so $F$ may not
be perturbed to eliminate the singularity.  However, in this case $H_0 N =
\{0\}$, so the transversality condition is not satisfied.
\end{example}

\begin{example}
Let us consider an example where the CR singularity can be perturbed away and the transversality condition is not satisfied.
Start again with
$N=\R^2$, and let $F(x,y) = (x+iy,x^2)$.  The image $F(N)$ is $w = (\Re z)^2$,
which is CR singular (a parabolic Bishop surface).
Fix $\epsilon >0$ and let $G(x,y) = (x+iy,x^2+i\epsilon x)$.  Clearly $G$ is arbitrarily
close to $F$, yet the Jacobian determinant of $G$ is $-i(2x+i\epsilon)$, which is
never zero on $N=\R^2$.  Hence, $G(N)$ is a CR submanifold at all points.

By letting $N=\R^2 \times \C$, we create an example of  positive CR dimension.
In this case we let $F(x,y,\xi) = (x+iy,\xi,x^2)$ and (for $\epsilon >0$)
$G(x,y,\xi) = (x+iy,\xi,x^2+i\epsilon x)$.
In this case $H_0 N$ is the $\xi$-direction,
that is, it can be identified with the complex line
$\{ x = 0, y = 0 \}$.
The tangent cone $C_0 Z_{\det DF}$ is given (as a
subset of $\C^3$) by
$\{ -i2x=0 \} = \{ x = 0 \}$, so $H_0 N \subset C_0 Z_{\det DF}$. Again,
$G(N)$ is a CR submanifold at all points.
\end{example}

In both of these examples, the image manifolds themselves cannot be perturbed,
see e.g. Lai~\cite{Lai}; however, the context of the theorem above is somewhat
different and more subtle when the CR dimension is bigger than 0.


\section{Perturbing away singularities in general} \label{section:nonequi}

We start the discussion of the non-equidimensional case with several instructive examples.

\begin{example}
Let $(z,w) \in \C \times \C^2 = \C^3$ be our coordinates.  Let $M$ be given by
\begin{equation}
w_1 = {\abs{z}}^6 , \qquad
w_2 = {\abs{z}}^4 .
\end{equation}
The submanifold $M$ is CR singular, with an isolated CR singularity at the
origin (we will see why in a moment).
The submanifold $M$ at CR points is not generic.  The intrinsic complexification of $M$ is the variety given by $w_1^2 = w_2^3$.  In
particular, the intrinsic complexification is singular, so there is
no biholomorphic change of coordinates on $\C^3$ that realizes $M$
as a subset of $\C^2 \subset \C^3$.

Let $F \colon \C^2 \to \C^3$ be given by
\begin{equation}
F(x,y) = \bigl(x+iy,(x^2+y^2)^3,(x^2+y^2)^2 \bigr) .
\end{equation}
Let $N=\R^2$.  Then $F(N) =  M$, and $F|_{N}$ is a diffeomorphism onto $M$.  
The derivative matrix is
\begin{equation}
DF =
\begin{bmatrix}
1 & i \\
6x(x^2+y^2)^2 & 6y(x^2+y^2)^2 \\
4x(x^2+y^2) & 4y(x^2+y^2)
\end{bmatrix}
\end{equation}
The three subdeterminants are
$6(x^2+y^2)^2(y-ix)$,
$4(x^2+y^2)(y-ix)$, and $0$.  Restricted to $N=\R^2$, these have a common
zero exactly at $0$, so $DF$ (as a holomorphic map on $\C^2$) is of rank $2$ on $N$ except at $0$. By Proposition \ref{prop:locCRsing}, the origin in $\C^3$ is the only CR singular point of $M$. 

For a small $\epsilon > 0$, consider the perturbation of $F$ given by
\begin{equation}
G(x,y) = \bigl(x+iy,(x^2+y^2)^3+\epsilon x,(x^2+y^2)^2 \bigr) .
\end{equation}
Then 
\begin{equation}
DG =
\begin{bmatrix}
1 & i \\
6x(x^2+y^2)^2 + \epsilon & 6y(x^2+y^2)^2 \\
4x(x^2+y^2) & 4y(x^2+y^2)
\end{bmatrix} .
\end{equation}
The three subdeterminants are
$6(x^2+y^2)^2(y-ix)-i\epsilon$,
$4(x^2+y^2)(y-ix)$, and $4y\epsilon(x^2+y^2)$.  Restricted to $N=\R^2$, these have no common
zeros,  so $G(N)$ is a CR submanifold, and it is an arbitrarily
close perturbation of $M$ in our sense.
\end{example}

\begin{example}
We can expand on the previous example by defining $M$ in $\C \times \C^\nu$
by
\begin{equation}
w_1 = \sabs{z}^{p_1} , \quad
\cdots , \quad
w_\nu = \sabs{z}^{p_\nu} ,
\end{equation}
for some numbers $p_1,\ldots,p_\nu$.
The intrinsic complexification of this submanifold is defined by
$w_j^{p_k}=w_k^{p_j}$.  For the proper choice of $p_1,\ldots,p_\nu$,
this subvariety is not contained in any proper nonsingular hypersurface.
This means that we can't biholomorphically ``reduce the dimension'' and
study $M$ in a smaller dimension.

The example also shows that the
biholomorphic equivalence class of $M$ is
not an intrinsic property of $M$ if our information consists of the spaces
$T^{0,1}_p M$.  For this submanifold, $T^{0,1}_p M$ is $\{0\}$ except at the origin,
where its complex dimension is $1$.
This is in fact true for every
CR singular 2-dimensional submanifold with an isolated CR singularity.
\end{example}

\begin{example} \label{example:nonremovablesing}
Define $N \subset \C^3$ by $z_1 = \bar{z}_3$.
This is a generic real-analytic submanifold of codimension 2.  Let $F \colon \C^3 \to \C^4$
be defined by
\begin{equation}
F(z) = (z_1,z_2,z_3^2,z_2z_3) .
\end{equation}
Then $F|_N$ is a diffeomorphism. Also, $DF$ is generically of rank 3, and $F(N)$ has a CR singularity at the origin (and only at the
origin), as we will see shortly.  We claim that, for any neighborhood $U$ of the origin
in $\C^3$,
there exists $\epsilon > 0$ such that if $G \colon U \to \C^4$
is a holomorphic map and $\snorm{F - G}_{U} < \epsilon$, then $DG$ is not of
rank $3$ at some point of $N \cap U$. Thus, the CR singularity cannot be perturbed away.

Compute
\begin{equation}
DF =
\begin{bmatrix}
1 & 0 & 0 \\
0 & 1 & 0 \\
0 & 0 & 2z_3 \\
0 & z_3 & z_2
\end{bmatrix} .
\end{equation}
Let $z=x+iy$.
The tangent space of $N$ at 0 is spanned
by
$\frac{\partial}{\partial x_2}|_0,
\frac{\partial}{\partial y_2}|_0,
\frac{\partial}{\partial x_1}|_0+\frac{\partial}{\partial x_3}|_0,
\frac{\partial}{\partial y_1}|_0-\frac{\partial}{\partial y_3}|_0$.
There are four $3 \times 3$ subdeterminants of $DF$:
$2z_3$, $z_2$, $-2z_3^2$, and $0$.  Thus, the submanifold $S$ where $DF$
is of rank less than 3 is of codimension 2, and $S$ (as a variety) has two of the subdeterminants,
$2z_3$ and $z_2$, as generators at $0$.
We will apply Proposition~\ref{prop:normaltransversality}.
Define $\varphi \colon \C^3 \to \C^2$ by $\varphi(z_1,z_2,z_3)=(2z_3,z_2)$,
that is, using the first two subdeterminants of $DF$.
Then $Z_\varphi=S$, $D\varphi$ is of rank $2$, and the (real) tangent space of $Z_\varphi$ at $0$ is spanned by $\frac{\partial}{\partial x_1}|_0,
\frac{\partial}{\partial y_1}|_0$. Thus,  
 as real submanifolds, $Z_\varphi$ and $N$ are transverse at $0$.
Now let $G$ be a holomorphic map from a neighborhood of $0$ to $\C^4$.
The subvariety of points where $DG$ is of rank less than 3 has codimension at most 2,
as it is the pullback via the components of $DG$ of the
subvariety of $4 \times 3$ matrices not of full rank, and the codimension of that
subvariety is 2.
Let $\psi$ be the map into $\C^2$ given by the first two 
subdeterminants of $DG$ (corresponding to the definition of $\varphi$).
For some ball $U$ centered at the origin, there exists 
$\epsilon > 0$ such that, if $G$ is $\epsilon$-close to $F$,
then via Cauchy's estimates
$Z_\psi$ is still a connected submanifold of a slightly smaller ball
$\widetilde{U}$. But $Z_\psi$ is
generated at $0$ by
perturbations of $2z_3$ and $z_2$, so $Z_\psi$ is a submanifold
of codimension 2.
As the subvariety of points where $DG$ is of rank less than 3 has codimension at most 2,
it must agree with
$Z_\psi$ on $\widetilde{U}$.
We can apply Proposition~\ref{prop:normaltransversality}
to get that $\psi$ has a zero on $N \cap \widetilde{U}$, and therefore
$DG$ is not of rank 3 at some point of $N \cap \widetilde{U}$.
\end{example}

If the codimension $m-n$ of the map is large enough, the CR singularity can
be perturbed away.  In fact, we find the sharp bound on the codimension.

\begin{thm} \label{thm:perturbmap}
Let $N \subset \C^n$ be a connected real-analytic generic  submanifold of
codimension $k$, let
$U$ be a neighborhood of $N$, and let $F \colon U \to \C^m$ be a holomorphic map
such that $F|_N$ is a diffeomorphism and $DF$ is generically of rank $n$.  Suppose that
\begin{equation}
4n-k < 2(m+1) .
\end{equation}
Then for every $p \in N$ there exists a neighborhood $W$ of $p$
such that, for every $\epsilon > 0$, the following holds: There is a holomorphic map $G \colon W \to
\C^m$ such that $\norm{F - G}_{W} < \epsilon$,
$G|_{N\cap W}$ is a diffeomorphism, and
$DG$ is of rank $n$ on $N \cap W$.
In other words, $G(N \cap W)$ is a CR submanifold.
\end{thm}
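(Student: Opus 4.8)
The plan is to reduce the statement to a transversality count and then apply Proposition~\ref{prop:normaltransversality} (or rather its failure) in reverse: instead of showing the zero set persists, we will show that under the stated dimension inequality a generic small perturbation $G$ of $F$ has $DG$ of full rank $n$ everywhere on $N\cap W$. First I would fix $p\in N$ and choose a small polydisc $W$ around $p$ on which $F$ is defined. The key object is the set $S_G = \{q \in W : \rank DG(q) < n\}$. Since $DG$ is an $m\times n$ holomorphic matrix, $S_G$ is the preimage under the holomorphic map $q \mapsto DG(q)$ of the determinantal variety $\Sigma$ of $m\times n$ complex matrices of rank $< n$; this variety $\Sigma$ has complex codimension $m-n+1$ in the space of matrices. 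So $S_G$ has complex codimension \emph{at most} $m-n+1$ in $\C^n$, i.e. real dimension at least $2n - 2(m-n+1) = 4n - 2m - 2$.

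Next I would set up the perturbation family. As in Proposition~\ref{prop:genericallyperturbphi}, consider $G(z) = F(Az + z_0) + L(z)$ where $A$ is unitary near $I$, $z_0$ is small, and $L$ is a small affine (or small generic polynomial) perturbation; one wants a finite-dimensional family of perturbations rich enough that for generic parameters the map $q \mapsto DG(q)$ is transverse to the stratified determinantal variety $\Sigma$. By the parametric transversality theorem (Thom), for generic parameters $S_G$ is a genuine submanifold of real codimension $2(m-n+1)$ in $W$, hence of real dimension $4n-2m-2$. Moreover, one can simultaneously arrange that $S_G$ is in \emph{general position} with respect to the real submanifold $N$: since $N$ has real dimension $2n - k$, the expected dimension of $S_G \cap N$ inside $\C^n$ (real dimension $2n$) is
\begin{equation}
(4n - 2m - 2) + (2n - k) - 2n = 4n - k - 2m - 2 = (4n - k) - 2(m+1) < 0
\end{equation}
by hypothesis. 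So for generic parameters, $S_G \cap N = \emptyset$ near $p$, which is exactly the claim: $DG$ has rank $n$ on $N\cap W$, so $G(N\cap W)$ is CR (of the same CR dimension as $N$), and shrinking $W$ keeps $G|_N$ a diffeomorphism since being an immersive diffeomorphism onto the image is an open condition under $C^1$-small perturbation.

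The main obstacle is the transversality step: one must exhibit a concrete finite-dimensional family of holomorphic perturbations of $F$ that is large enough to guarantee transversality of $q\mapsto DG(q)$ to every stratum of the determinantal variety $\Sigma$ (it is singular, so one stratifies by corank and checks transversality stratum by stratum), \emph{and} simultaneously to $N$. The affine family $F(Az+z_0)+L(z)$ may not suffice in general because $DF$ could be highly degenerate; the safe fix is to allow $L$ to range over all polynomial maps $W\to\C^m$ of some bounded degree, whose derivatives $DL$ can realize an arbitrary prescribed value and first jet at $p$, and more generally an arbitrary holomorphic deformation of the matrix function $DF$ to the order needed for Thom transversality along each stratum of $\Sigma$; then one invokes the jet-transversality theorem. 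Once transversality is in hand the dimension count above closes the argument, and a Cauchy-estimate bound lets us pass from smallness of $L$ (and $\snorm{A-I}$, $\snorm{z_0}$) to $\snorm{F-G}_W < \epsilon$. One should also note that the generic $G$ need not fix $F(p)=q$ or even have $p\mapsto$ a CR-singular point — that is fine, since the conclusion is precisely that $G(N\cap W)$ has \emph{no} CR singularities.
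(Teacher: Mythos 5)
Your core argument is the same as the paper's: perturb $F$ by a small linear map and show that the dimension count $(4n-2m-2)+(2n-k)-2n=(4n-k)-2(m+1)<0$ forces the degeneracy locus to miss $N$ for a generic perturbation. Where you diverge is in the mechanism. The paper never invokes transversality to the (singular, stratified) determinantal variety at all: writing $G(z)=F(z)+Az$ so that $DG(z)=DF(z)+A$, it observes that $DG(z)$ has rank $j<n$ at some $z\in N$ exactly when $A$ lies in the image of the map $\varphi_j(z,B)=B-DF(z)$ on $N\times M_{j,m\times n}(\C)$; the source has real dimension $(2n-k)+2\bigl(nm-(n-j)(m-j)\bigr)<2nm$ under the hypothesis (worst case $j=n-1$), so each image is a thin set in matrix space and one picks $A$ arbitrarily small outside their union. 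This sidesteps the two issues you flag. First, your worry that the affine family might not suffice when $DF$ is degenerate is unfounded: the map $(z,A)\mapsto DF(z)+A$ is a submersion onto $M_{m\times n}(\C)$ no matter how degenerate $DF$ is, so parametric transversality (or the paper's equivalent measure-zero count) applies with constant-matrix perturbations alone; no jet transversality or higher-order terms are needed. Second, your two-step route --- first make $S_G$ a nice stratified set in $W$, then put it in ``general position'' with respect to $N$ --- is the one genuinely soft step, since moving $S_G$ relative to $N$ is not a standard transversality statement for your family of perturbations. The fix is to skip the ambient set $S_G$ entirely and apply the dimension count to the restriction $N\ni z\mapsto DF(z)+A$: since $\dim_\R N=2n-k$ is strictly less than $2(m-n+1)$, the real codimension of every stratum of the determinantal variety, a generic $A$ makes this restricted map miss the variety altogether. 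With that repair your argument closes, and your handling of the diffeomorphism claim (openness of embeddings under $C^1$-small perturbation on a shrunken $W$) is fine.
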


\begin{proof}
The proof follows the general logic of the proof of Theorem 10.8
in the first edition of Lee~\cite{LeeFirstEd}.
Define $G(z) = F(z)+Az$ for a small $m \times n$ matrix $A$.  Then
$DG(z) = DF(z)+A$.  We want to pick $A$ such that $DG(z)$ is of rank
$n$ for all $z \in N$.
For $j=0,1,\ldots,n-1,$ consider the mapping $\varphi_j \colon N \times M_{j,m\times n}(\C) \to
M_{m\times n}(\C)$ defined by $\varphi_j(z,B) = B-DF(z)$,
where $M_{m \times n}(\C)$ is the complex manifold of $m \times
n$ matrices and $M_{j,m\times n}(\C)$ is the complex submanifold of the matrices of
rank $j$.
If there exists $A$ in the complement of $\varphi_j\bigl(N \times M_{j,m\times n}(\C)\bigr)$
in $M_{m\times n}(\C)$ for all $j=0,1,\ldots,n-1$, then $A$ does the job.

The real dimension of $\varphi_j\bigl(N \times M_{j,m\times n}(\C)\bigr)$
is at most the dimension of $N$ plus the real dimension of 
$M_{j,m\times n}(\C)$ (which is of complex codimension $(n-j)(m-j)$).
That is,
\begin{equation}
\dim_{\R} \varphi_j\bigl(N \times M_{j,m\times n}(\C)\bigr)
\leq (2n-k) + 2\bigl(nm-(n-j)(m-j)\bigr) .
\end{equation}
If that dimension is less than the real dimension of $M_{m\times n}(\C)$,
which is $2nm$, then $A$ as above exists.  That is, $A$ exists
when
for all $j=0,1,\ldots,n-1$,
\begin{equation}
(2n-k) + 2nm-2(n-j)(m-j) < 2nm .
\end{equation}
If the inequality holds for $j=n-1$, then it holds for the
smaller $j$.  So plug $j=n-1$ into the inequality to obtain
$4n-k < 2(m+1)$.
As the images 
$\varphi_j\bigl(N \times M_{j,m\times n}(\C)\bigr)$ are all thin sets in $M_{m\times
n}(\C)$, we can pick $A$ outside of these sets, arbitrarily close to the
zero matrix.
\end{proof}

The inequality in Theorem \ref{thm:perturbmap} is sharp.  That is, if it is not satisfied we can construct a
counterexample.  Example~\ref{example:nonremovablesing} can be generalized to show that, if
$2 \leq k \leq n \leq m$ are integers such that
\begin{equation}
4n-k \geq 2(m+1) ,
\end{equation}
then there exist a connected generic real-analytic submanifold
$N \subset \C^n$ of codimension $k$ with $0 \in N$,
a neighborhood $U$ of $N$, and a holomorphic map $F \colon U \to \C^m$ 
with $F|_N$ a diffeomorphism and  $DF$ generically of rank $n$, having the following property: 
For every neighborhood $W$ of $0$,
there exists  $\epsilon > 0$
such that, for every holomorphic $G \colon W \to \C^m$
with $\norm{F - G}_{W} < \epsilon$,
the derivative
$DG$
is not of rank $n$ somewhere on $N \cap W$.
In other words, $G(N \cap W)$ is not a CR submanifold.

In fact, analogously to Proposition~\ref{prop:genericallyperturbphi}, we can prove
more.  The behavior described above is the generic behavior for $k \geq 2$: Starting
with any $N$ and $F$, if $4n-k \geq 2(m+1)$, we can perturb $F$ slightly to
$\widetilde{F}$ in such a way that $N$ and $\widetilde{F}$ give a 
a CR singular CR image where the CR singularity cannot be perturbed away
by moving $\widetilde{F}$.  First, let us show that such an example exists
to begin with.

\begin{prop} \label{prop:sharpnessconstruction}
Suppose $2 \leq k \leq n \leq m$ are integers such that
\begin{equation}\label{eq:ineqmsmall}
4n-k \geq 2(m+1) .
\end{equation}
Then there exist a connected real-analytic generic submanifold
$N \subset \C^n$ of codimension $k$ with $0 \in N$,
a neighborhood $U$  of $N$, and a holomorphic map $F \colon U \to \C^m$ 
such that $F|_N$ is a diffeomorphism and $DF$ is generically of rank $n$, having the following property:  
For every neighborhood $W$ of $0$,
there exists $\epsilon > 0$
such that, for every holomorphic $G \colon W \to \C^m$
with $\norm{F - G}_{W} < \epsilon$,
the derivative
$DG$
is not of rank $n$ somewhere on $N \cap W$.
In other words, $G(N\cap W)$ is not a CR submanifold.
\end{prop}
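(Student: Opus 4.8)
The plan is to generalize Example~\ref{example:nonremovablesing}. Set $\nu = m-n+1 \geq 1$ and split the target space as $\C^m = \C^{n-1}\times\C^\nu$. Define the polynomial map $F \colon \C^n \to \C^m$ by
\begin{equation}
F(z) = \Bigl( z_1,\dots,z_{n-1},\; \tfrac12 z_n^2,\; z_{n-1}z_n,\; z_{n-2}z_n,\;\dots,\; z_{n-\nu+1}z_n \Bigr).
\end{equation}
The inequality $4n-k\geq 2(m+1)$ together with $k\geq 2$ forces $\nu \leq n-1$, so the indices above make sense. Because the first $n-1$ rows of $DF$ are $\bigl[\, I_{n-1} \mid 0 \,\bigr]$, a column reduction shows that, for each $j=1,\dots,\nu$, the $n\times n$ minor of $DF$ formed from rows $1,\dots,n-1$ together with row $n-1+j$ equals the $z_n$-derivative of the $j$-th function in the last block. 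Hence $DF(z)$ has rank $n$ precisely when $\varphi(z)\neq 0$, where $\varphi\colon\C^n\to\C^\nu$ is given by $\varphi(z)=(z_n,z_{n-1},\dots,z_{n-\nu+1})$, and rank $n-1$ otherwise. Thus $Z_\varphi = \{z : \rank DF(z) < n\}$, $\varphi(0)=0$, $D\varphi$ has rank $\nu$ everywhere, and $Z_\varphi$ is the coordinate subspace $\{z_{n-\nu+1}=\cdots=z_n=0\}$, so $\varphi$ plays the role played by the pair of first two subdeterminants in Example~\ref{example:nonremovablesing}.

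Next I would construct $N$. Group the coordinates of $\C^n$ into a $\C^2$-factor carrying $(z_1,z_n)$ and a $\C^{n-2}$-factor carrying $(z_2,\dots,z_{n-1})$, and set $N = N_1\times N_2$, where $N_1 = \{z_n=\bar z_1\}\subset\C^2$ is a connected generic submanifold of codimension $2$, and $N_2 \subset \C^{n-2}$ is an $\R$-linear subspace of real dimension $2n-k-2$ chosen in general position so that $N_2 + iN_2 = \C^{n-2}$ and the coordinate projection of $N_2$ onto the $\C^{\nu-1}$ carrying $(z_{n-\nu+1},\dots,z_{n-1})$ is surjective. Such an $N_2$ exists: since $2n-k-2\geq n-2$ (because $k\leq n$) the subspaces $V$ of dimension $2n-k-2$ with $V+iV=\C^{n-2}$ form an open dense set, and since $2n-k-2\geq 2(\nu-1)$ — which is precisely the hypothesis $4n-k\geq 2(m+1)$ — the subspaces surjecting onto that $\C^{\nu-1}$ also form an open dense set, so the two sets meet. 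Then $N$ is a connected generic submanifold of codimension $2+(k-2)=k$ with $0\in N$. On $N$ one recovers $z_1,\dots,z_{n-1}$ from the first $n-1$ components of $F$ and then $z_n=\bar z_1$, so $F|_N$ is injective; moreover $\ker DF(0)$ is the $z_n$-axis, which meets $T_0N$ trivially (in $T_0N_1$ a vector $a(\partial_{x_1}+\partial_{x_n})+b(\partial_{y_1}-\partial_{y_n})$ lies on the $z_n$-axis only for $a=b=0$, and $T_0N_2$ lies in the other factor), so $F|_N$ is an immersion; shrinking $N$ to a small neighborhood of $0$ makes it a diffeomorphism onto $M=F(N)$. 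Finally $DF$ has rank $n$ off $Z_\varphi$, and $N\not\subset Z_\varphi$ by the transversality below, so $DF$ is generically of rank $n$.

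The crux is that $Z_\varphi$ and $N$ are transverse at $0$ as real submanifolds. Here $T_0Z_\varphi = \operatorname{span}_\R\{\partial_{x_i},\partial_{y_i}: 1\leq i\leq n-\nu\}$ and $T_0N = T_0N_1\oplus T_0N_2$ with $T_0N_1 = \operatorname{span}_\R\{\partial_{x_1}+\partial_{x_n},\,\partial_{y_1}-\partial_{y_n}\}$. Working modulo $T_0Z_\varphi$ — which already contains $\partial_{x_1},\partial_{y_1}$ — the two vectors of $T_0N_1$ contribute $\partial_{x_n},\partial_{y_n}$, while the surjectivity of the projection of $N_2$ makes $T_0N_2$ contribute $\partial_{x_i},\partial_{y_i}$ for $n-\nu+1\leq i\leq n-1$. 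Together with $T_0Z_\varphi$ itself, these span all of $T_0\C^n$, which is exactly the transversality hypothesis of Proposition~\ref{prop:normaltransversality}; and this dimension count closes precisely when $4n-k\geq 2(m+1)$.

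Finally I would run the perturbation argument of Example~\ref{example:nonremovablesing} verbatim. The determinantal variety of $m\times n$ matrices of rank less than $n$ has complex codimension $\nu$, and near the rank-$(n-1)$ matrix $DF(0)$ it is a smooth submanifold of codimension $\nu$ cut out by the $\nu$ minors defining $\varphi$. Given a ball $W$ about $0$, for $G\colon W\to\C^m$ holomorphic with $\norm{F-G}_W$ small, Cauchy's estimates bound $\norm{DF-DG}$ on a slightly smaller ball, so the corresponding $\nu$ minors $\psi$ of $DG$ satisfy $\norm{\varphi-\psi}$ small, $D\psi$ has rank $\nu$, and $Z_\psi$ is a codimension-$\nu$ submanifold there; since $Z_\psi$ lies in the locus $\{z : \rank DG(z)<n\}$, which near $0$ is empty or of codimension $\nu$ (being a pullback of that determinantal variety), the two coincide near $0$. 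Proposition~\ref{prop:normaltransversality} applied to $\varphi$ then yields that $\psi$ vanishes somewhere on $N\cap W$, so $DG$ fails to have rank $n$ there. As $DG$ is still generically of rank $n$ on $N$ by lower semicontinuity of rank, and (shrinking $W$) $G|_{N\cap W}$ is still a diffeomorphism onto its image, Proposition~\ref{prop:locCRsing} shows $G(N\cap W)$ is CR singular, hence not a CR submanifold. The main obstacle is the construction itself and pinning down the transversality: one must arrange simultaneously that $N$ is generic of the prescribed codimension, that $F|_N$ is a diffeomorphism with $DF$ generically of full rank, and that $Z_\varphi$ and $N$ meet transversally at $0$, and it is precisely the inequality $4n-k\geq 2(m+1)$ that makes the last of these dimensionally possible.
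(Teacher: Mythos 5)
Your construction is essentially the paper's: the same map $F(z)=(z_1,\dots,z_{n-1},z_n^2,z_nz_{n-1},\dots)$, a generic linear $N$ arranged so that $N$ and the rank-drop locus $Z_\varphi$ are transverse at $0$, and then Proposition~\ref{prop:normaltransversality} exactly as in Example~\ref{example:nonremovablesing}; the only (cosmetic) differences are that you work with the given $m$ rather than reducing to the maximal $m=2n-k/2-1$, and you take $N=N_1\times N_2$ with $N_2$ in general position instead of the paper's explicit conjugate pairings $z_j=\bar z_{n-j+1}$, which also lets you treat odd $k$ uniformly. One sentence needs repair: you write that $Z_\psi$ lies in $\{z:\rank DG(z)<n\}$, but the inclusion that is free is the opposite one ($\rank DG<n$ forces all maximal minors, in particular the $\nu$ chosen ones, to vanish); the inclusion you actually need, $Z_\psi\subset\{\rank DG<n\}$, holds because near the rank-$(n-1)$ matrix $DF(0)$ (whose upper $(n-1)\times(n-1)$ block is invertible) the $\nu$ chosen minors generate the ideal of the determinantal variety, so every other maximal minor of $DG$ is a holomorphic combination of the components of $\psi$ for $z$ near $0$ and $G$ near $F$. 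With that fixed, the argument is correct and matches the paper's.
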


\begin{proof}
The idea behind the construction is purely of counting dimensions.
The determinantal variety consisting of all
$m \times n$ matrices not of full rank has codimension $m-n+1$ in the space of
$m \times n$ matrices.  That means if we have only $m-n+1$
complex variables that parametrize $N$, then the set of points of
$N$ where a function $G$ is not of rank $n$ 
should in general be of dimension $2\bigl(n-(m-n+1)\bigr)-k$;
so, as long as this
quantity is nonnegative we should be able to construct an example.

The construction follows the same outline as
Example~\ref{example:nonremovablesing}.
Let us first suppose that $k$ is even.
We also assume without loss of generality that
\begin{equation}
m=2n-\frac{k}{2}-1 ,
\end{equation}
as that is the maximal target dimension allowed by the inequality:
The whole idea is for the matrix $DF$ to be of rank less than $n$ somewhere on
$N$, and by removing rows we cannot possibly make the rank grow.

Let $\ell = n-\frac{k}{2}$, and let $F$ be given by
\begin{equation}
F(z_1,\ldots,z_n)
=
(z_1,\ldots,z_{n-1},
z_{n}^2,
z_{n}z_{n-1},
\ldots ,
z_n z_{n-\ell+1}) .
\end{equation}
The derivative is
\begin{equation}
DF = 
\begin{bmatrix}
1 & 0 & \cdots & 0 & 0 & 0 \\
0 & 1 & \cdots & 0 & 0 & 0 \\
\vdots & \vdots & \ddots & \vdots & \vdots & \vdots\\
0 & 0 & \cdots & 1   & 0   & 0 \\
0 & 0 & \cdots & 0   & 1   & 0 \\
0 & 0 & \cdots & 0   & 0   & 2z_n \\
0 & 0 & \cdots & 0   & z_n & z_{n-1} \\
0 & 0 & \cdots & z_n & 0   & z_{n-2} \\
\vdots & \vdots & \ddots & \vdots & \vdots & \vdots
\end{bmatrix} .
\end{equation}
The set of $m \times n$ matrices of rank less than $n$ is
of codimension $m-n+1=\ell$.
We indeed have $\ell$ subdeterminants, namely 
$2z_n, z_{n-1},\ldots, z_{n-\ell+1}$, that generate at $0$ the submanifold of points 
where the rank of $DF$ is less than $n$.
Let $N$ be the submanifold defined by
\begin{equation} \label{eq:eqforNexample}
z_1=\bar{z}_n, \qquad
z_2=\bar{z}_{n-1}, \qquad
\cdots, \qquad
z_{k/2}=\bar{z}_{n-k/2+1}.
\end{equation}
Now
$\frac{k}{2} = 2n-m-1$ and
$n-\ell+1 = 2n-m$,
and it is again clear that the two submanifolds are transverse at $0$.
The conclusion follows as in Example~\ref{example:nonremovablesing} by appealing to
Proposition~\ref{prop:normaltransversality}.

If $k$ is odd, then $\widetilde{k} = k+1$ satisfies the
inequality \eqref{eq:ineqmsmall}.  We use $\widetilde{k}$ instead of $k$
to construct $F$ and $\widetilde{N}$.  By dropping the
real (or imaginary) part of the last equation in  \eqref{eq:eqforNexample},
we obtain an $N$ of codimension $k$.
The $F$ constructed above is still a diffeomorphism when restricted to $N$.
We conclude as earlier.
\end{proof}

Now let us show that such an example exists near any starting point.

\begin{prop}
Let $N \subset \C^n$ be a connected real-analytic generic  submanifold of
codimension $k \geq 2$,
$U$ a neighborhood of $N$, and $F \colon U \to \C^m$ a holomorphic map
such that $F|_N$ is a diffeomorphism and
$DF$ is generically of rank $n$.
Suppose that
\begin{equation}
4n-k \geq 2(m+1) .
\end{equation}
Then for every $p \in N$ and every $\delta > 0$,
there exist a neighborhood $\widetilde{U}$ of $p$
and a holomorphic map $\widetilde{F} \colon \widetilde{U} \to \C^m$
such that $\bnorm{F-\widetilde{F}} < \delta$, $\widetilde{F}|_N$
is a diffeomorphism,
$D\widetilde{F}$ is generically of rank $n$,
$\widetilde{F}(N \cap \widetilde{U})$ is CR singular
at $\widetilde{F}(p)$,
and $\widetilde{F}$ has the following property:
For every neighborhood $W$ of $p$, with $W \subset \widetilde{U}$,
there exists $\epsilon > 0$
such that, for every $G \colon W \to \C^m$
with $\bnorm{\widetilde{F} - G}_{W} < \epsilon$,
the derivative
$DG$
is not of rank $n$ somewhere on $N \cap W$.
In other words, $G(N \cap W)$ is not a CR submanifold.
\end{prop}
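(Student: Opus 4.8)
Here is a plan of proof.

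\medskip

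The plan is to adapt the construction of Proposition~\ref{prop:sharpnessconstruction} to the germ of $F$ at $p$. Set $\ell = m-n+1$, the codimension in $M_{m\times n}(\C)$ of the variety of matrices of rank $<n$; note that the hypotheses $4n-k\geq 2(m+1)$ and $k\geq 2$ give precisely $2n-k\geq 2\ell$ (equivalently $\dim_{\R}N\geq 2\ell$) and $1\leq\ell\leq n-1$, and this is all the dimension information used. First I would reduce to a normal form. Since matrices of rank $n$ are dense in $M_{m\times n}(\C)$, after adding to $F$ a linear map with arbitrarily small coefficients we may assume $DF(p)$ has rank $n$; this preserves the hypotheses ($F|_N$ is still a diffeomorphism near $p$, as $DF(p)|_{T_pN}$ is injective, and $DF$ is still generically of rank $n$), at the cost of shrinking the domain, and it is subtracted from the $\delta$--budget. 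Then by the holomorphic rank theorem there are biholomorphic coordinate changes near $p$ in $\C^n$ and near $F(p)$ in $\C^m$ after which $p=0$ and $F$ is the inclusion $\iota\colon z\mapsto(z,0,\dots,0)\in\C^n\times\C^{m-n}$; these are fixed biholomorphisms, so they distort sup-norms only by fixed constants and transport all conclusions back to the original $F$.

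Now let $\mu\colon\C^n\to\C^{(n-1)+\ell}=\C^m$ be the model map
\begin{equation*}
\mu(z_1,\dots,z_n)=\bigl(z_1,\dots,z_{n-1},\;z_n^2,\;z_nz_{n-1},\;\dots,\;z_nz_{n-\ell+1}\bigr),
\end{equation*}
which makes sense since $1\le\ell\le n-1$. The first $n-1$ rows of $D\mu$ are the constant block $[\,I_{n-1}\mid 0\,]$, and all remaining entries of $D\mu(0)$ vanish, so $D\mu(0)$ has rank $n-1$ with kernel $\C e_n$; a Schur-complement computation shows $\rank D\mu(z)<n$ exactly on the linear subspace $V=\{z_n=z_{n-1}=\dots=z_{n-\ell+1}=0\}$ of complex codimension $\ell$, and the $\ell$ maximal minors of $D\mu$ formed from those first $n-1$ rows together with each of the other $\ell$ rows are, up to nonzero constants, the coordinates $z_n,z_{n-1},\dots,z_{n-\ell+1}$, hence they assemble into a surjective linear map $\C^n\to\C^{\ell}$ with kernel $V$. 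Choose a complex-linear automorphism $B$ of $\C^n$ generically, so that $B(V)+T_0N=T_0\C^n$ (possible since $\dim_{\R}B(V)+\dim_{\R}T_0N=(2n-2\ell)+(2n-k)\geq 2n$) and $B(\C e_n)\cap T_0N=\{0\}$ (possible since $2+(2n-k)\leq 2n$); such $B$ lies in an intersection of open dense subsets of $GL_n(\C)$, hence exists. Put $\widetilde F=\mu\circ B^{-1}$, a polynomial map of degree $2$, and let $\widetilde U=B_\rho(0)$ with $\rho$ small. Then $\norm{\widetilde F-F}_{\widetilde U}=O(\rho)<\delta$ for $\rho$ small; $D\widetilde F(0)=D\mu(0)B^{-1}$ has rank $n-1$ with kernel $B(\C e_n)$, so $\widetilde F|_N$ is an immersion at $0$ and, after shrinking $\rho$, a diffeomorphism onto its image on $N\cap\widetilde U$; the locus $S=\{\rank D\widetilde F<n\}=B(V)$ is a linear subspace of complex codimension $\ell$, transverse to $N$ at $0$, so $D\widetilde F$ is generically of rank $n$ on $\C^n$ and on $N$. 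By Proposition~\ref{prop:locCRsing}, $\widetilde F(N\cap\widetilde U)$ is CR singular exactly at $\widetilde F(S\cap N\cap\widetilde U)$, and $0\in S\cap N$, so $\widetilde F(N\cap\widetilde U)$ is CR singular at $\widetilde F(p)$.

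For the non-removability I would mimic Example~\ref{example:nonremovablesing}. The first $n-1$ rows of $D\widetilde F$ are the constant rank-$(n-1)$ matrix consisting of the first $n-1$ rows of $B^{-1}$; so for $G$ holomorphic and close enough to $\widetilde F$ on a neighborhood of $0$, Cauchy's estimates keep the first $n-1$ rows of $DG$ linearly independent, and then $\rank DG<n$ near $0$ is equivalent to the vanishing of the $\ell$ maximal minors $\psi_G$ of $DG$ formed from those rows and each of the remaining $\ell$ rows. For $G=\widetilde F$ the map $\psi_{\widetilde F}$ is a surjective linear map $\C^n\to\C^\ell$ with $\psi_{\widetilde F}(0)=0$ and $Z_{\psi_{\widetilde F}}=S$ transverse to $N$ at $0$. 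Given a neighborhood $W\subset\widetilde U$ of $p$, pick a relatively compact ball $W_0\subset\subset W$ about $0$ with $N\cap W_0$ connected, and apply Proposition~\ref{prop:normaltransversality} (with $\varphi=\psi_{\widetilde F}$ and $\nu=\ell$) to get $\epsilon_0>0$ such that every holomorphic $\psi\colon W_0\to\C^\ell$ with $\norm{\psi_{\widetilde F}-\psi}_{W_0}<\epsilon_0$ vanishes somewhere on $N\cap W_0$. By Cauchy's estimates there is $\epsilon>0$ so that $\norm{G-\widetilde F}_W<\epsilon$ forces $\norm{\psi_G-\psi_{\widetilde F}}_{W_0}<\epsilon_0$ and keeps the first $n-1$ rows of $DG$ independent on $W_0$; then $\psi_G$ vanishes at some $q\in N\cap W_0\subset N\cap W$, so $\rank DG(q)<n$, i.e.\ $G(N\cap W)$ is not a CR submanifold. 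Undoing the coordinate changes gives the statement for the original $F$.

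The main obstacle is that the perturbation is genuinely not small in the $C^1$ norm --- lowering the rank of $DF(p)$ from $n$ costs a perturbation of fixed positive size --- so $\widetilde U$ is forced to shrink with $\delta$, and one must check with care that shrinking the domain does not destroy the injectivity of $\widetilde F|_N$, the transversality of $S$ and $N$ at $0$, or the constant rank $\ell$ of $D\psi_{\widetilde F}$, and that a single linear $B$ can be chosen in the intersection of all the relevant open dense conditions. The remaining points --- the Schur-complement computation for $\mu$, the $O(\rho)$ estimate, the identification of $\{\rank DG<n\}$ with $Z_{\psi_G}$ near $0$, and the transport of the conclusion through the fixed biholomorphisms --- are routine.
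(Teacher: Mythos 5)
Your proposal is correct, but it takes a genuinely different route from the paper's. The paper argues in the 2-jet space of $\widetilde{F}$ at $p$: the non-removability property depends only on the degree-$\le 2$ part of the map (the $n\times n$ minors of $D\widetilde{F}$ see only the 2-jet), the set $X$ of bad 2-jets (minors not cutting out a maximal-codimension submanifold, or that submanifold not transverse to $N$ at $p$) is a real subvariety, and Proposition~\ref{prop:sharpnessconstruction} exhibits one point outside $X$, so the complement of $X$ is open and dense and one perturbs only the quadratic coefficients of $F$. That argument buys a perturbation that is small in the space of 2-jets, hence small on a neighborhood that does not shrink with $\delta$, and it genuinely shows the behavior is \emph{generic}. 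You instead discard $F$ altogether: after straightening you replace it by the explicit degree-2 model $\mu\circ B^{-1}$ with $B$ a generic linear map, and you achieve $\bnorm{F-\widetilde{F}}<\delta$ only by shrinking $\widetilde{U}$ with $\delta$. Since the statement quantifies $\widetilde{U}$ after $\delta$, this does prove the proposition as written, and your version of the model (targeting $\C^{n-1+\ell}=\C^m$ directly and using the actual $T_pN$ rather than the special $N$ of Proposition~\ref{prop:sharpnessconstruction}) is a clean reduction; but note that with $\widetilde{U}$ allowed to depend on $\delta$, \emph{any} map agreeing with $F$ at $p$ is $\delta$-close on a small enough ball, so your argument really reduces the proposition to ``an example with the prescribed tangent space exists,'' which is weaker in spirit than the paper's genericity statement. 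Two details to tighten: the existence of $B$ with $B(V)+T_pN=\C^n$ is not a consequence of the dimension count alone --- one must exhibit a single complex $(n-\ell)$-plane whose image in $\C^n/T_pN$ is surjective (easy, using that $T_pN\cong \C^{n-k}\times\R^k$ and $2(n-\ell)\ge k$), after which openness and density of the good $B$ follow; and the identification of $\{\rank DG<n\}$ with $Z_{\psi_G}$ should be stated as valid only where the first $n-1$ rows of $DG$ remain independent, which your Cauchy-estimate step does supply.
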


\begin{proof}
The key idea is this: Whether the singularity can be eliminated by perturbing
a map depends only on the quadratic terms of the map.
In fact, we want to
ensure that the $n \times n$ subdeterminants of $D\widetilde{F}$ generate
the ideal of the complex submanifold defined by $\rank D\widetilde{F} < n$
and that this submanifold is of the maximum possible codimension, that is,
the codimension of the set of $m \times n$ matrices of rank less than $n$.
But the $n \times n$ subdeterminants of $D \widetilde{F}$ depend only on terms
of order 2 or less.
With regard to $N$, to make the construction in
Proposition~\ref{prop:sharpnessconstruction} work we only care about transversality
at $p$, and we can make $T_p N$ the same as in the proposition.
Let $X$ be the set of coefficients of order 2 or less (the 2-jet space) for $\widetilde{F}$ such that the
$n \times n$ subdeterminants of $D \widetilde{F}$ do not generate a
submanifold of the same codimension as the space of $m \times n$ matrices,
or generate submanifolds that are not transverse to $N$ at $p$.  This set is
a (real) subvariety of the 2-jet space.  Also, $X$ is not the entire 2-jet space:  In Proposition \ref{prop:sharpnessconstruction} we have demonstrated one point
of the 2-jet space that is not in $X$.
So the complement of $X$ is open and dense, and we are finished.
\end{proof}

For $N \subset \C^n$ of codimension 2,
we can eliminate the singularity in $n+1$ dimensions as long as we start with an equidimensional map and compose it with the natural embedding $\C^n \subset
\C^{n+1}$, that is, if the last component of the resulting map is zero.
In fact, we have the following general statement.

\begin{prop}
Let $N \subset \C^n$ be a connected real-analytic generic  submanifold of
codimension $2\ell$ or $2\ell+1$,
$U$ a neighborhood of $N$, and $F \colon U \to \C^n$ a holomorphic map
such that $F|_N$ is a diffeomorphism
and $DF$ is generically of rank $n$.
Consider $F \oplus 0 \colon U \to \C^n \times \C^\ell$.
Then for every $p \in N$ there exists a neighborhood $W$ of $p$
such that, for every $\epsilon > 0$, there is a holomorphic map $G \colon W \to
\C^n \times \C^\ell$ such that $\snorm{G - F \oplus 0}_{W} < \epsilon$, $G|_{N \cap W}$ is a diffeomorphism,  and
$DG$ is of rank $n$ on $N \cap W$.
In other words, $G(N \cap W)$ is a CR submanifold.
\end{prop}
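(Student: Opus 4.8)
The plan is to use the hypothesis that $F|_N$ is a diffeomorphism only through one dimension bound on $\ker DF$ at the chosen point, and then to write down an explicit linear perturbation that introduces the ``missing'' kernel directions as new coordinates.

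Fix $p\in N$ and set $r_0:=\dim_{\C}\ker DF(p)$. If $r_0=0$, then $DF$ has rank $n$ near $p$, hence so does $D(F\oplus 0)$, and one takes $G=F\oplus 0$; so assume $r_0\ge 1$. The key step is the inequality $r_0\le\ell$. Since $F|_N$ is an immersion at $p$, the complex subspace $\ker DF(p)$ meets $T_pN$ only in $\{0\}$, so $\dim_{\R}\ker DF(p)+\dim_{\R}T_pN\le 2n$, that is, $2r_0+(2n-k)\le 2n$; hence $r_0\le k/2$, and since $r_0\in\Z$ and the codimension $k$ is $2\ell$ or $2\ell+1$, we get $r_0\le\lfloor k/2\rfloor=\ell$.

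Next I would normalize coordinates: an affine change of holomorphic coordinates on the source $\C^n$ (harmless, since it preserves genericity of $N$, the fact that $F|_N$ is a diffeomorphism, and all ranks of $DF$) lets us assume $p=0$ and $\ker DF(0)=\{0\}^{\,n-r_0}\times\C^{r_0}$; write $z=(z',z'')\in\C^{n-r_0}\times\C^{r_0}$. Then the $n\times(n-r_0)$ matrix $\partial F/\partial z'$ is injective at $0$, hence of full column rank on some neighborhood $W$ of $0$. Given $\epsilon>0$, choose $\delta>0$ with $\delta\sup_{W}\lvert z''\rvert<\epsilon$ and set
\begin{equation*}
G(z)=\bigl(F(z),\,\delta z''_1,\,\ldots,\,\delta z''_{r_0},\,0,\,\ldots,\,0\bigr)\in\C^{n}\times\C^{\ell},
\end{equation*}
with $\ell-r_0\ge 0$ trailing zero components (this is where $r_0\le\ell$ is used); then $\norm{G-F\oplus 0}_W<\epsilon$. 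If $DG(z)v=0$ with $v=(v',v'')$, reading off the rows coming from the new components gives $\delta v''=0$, so $v''=0$, and then $(\partial F/\partial z')(z)\,v'=0$ forces $v'=0$; hence $DG$ has rank $n$ on all of $W$, in particular on $N\cap W$.

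It remains to arrange that $G|_{N\cap W}$ is a diffeomorphism; this is the one place needing a little care, but there is no genuine obstacle. First shrink $W$ to a relatively compact neighborhood of $0$ on which $F|_{N\cap\overline{W}}$ is a diffeomorphism onto its image; then for $\delta$ small enough $G|_{N\cap\overline{W}}$ is $C^1$-close to $(F\oplus 0)|_{N\cap\overline{W}}$, hence still an injective immersion, i.e.\ a diffeomorphism onto its image. Finally, $DG$ has rank $n$ on $N\cap W$ and $G|_{N\cap W}$ is a diffeomorphism, so by Proposition~\ref{prop:locCRsing} the image $G(N\cap W)$ has no CR singularities, i.e.\ it is a CR submanifold. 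The whole argument thus reduces to the inequality $r_0\le\ell$, which is forced by $F|_N$ being a diffeomorphism together with the numerical relation between $k$ and $\ell$; everything after it is the explicit local construction above.
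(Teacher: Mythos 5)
Your proof is correct and follows essentially the same route as the paper: both arguments hinge on the dimension count showing that $F|_N$ being a diffeomorphism forces $\dim_{\C}\ker DF(p)\le\ell$ (equivalently $\operatorname{rank}DF(p)\ge n-\ell$), and both then append small linear terms in the kernel directions to the extra $\C^\ell$ components so that $DG$ has rank $n$ on a whole neighborhood. The only differences are cosmetic — you normalize the kernel at $p$ and perturb just $r_0\le\ell$ components, whereas the paper straightens $F$ via the implicit function theorem and adds $\epsilon z_{n-\ell+1},\ldots,\epsilon z_n$ — and your injectivity check could even be shortened by noting that $G=(F,\delta z'',0)$ is automatically injective wherever $F|_N$ is.
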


Note that if $\ell=0$ (e.g., $N$ is a hypersurface),
then $F$ is actually a local biholomorphism already. In the proof we assume that $1 \leq \ell <n$.

\begin{proof}
Suppose that $DF|_p$ is of rank $r$.  After a local holomorphic change of coordinates, write $DF|_p$
as
\begin{equation}
DF|_p =
\begin{bmatrix}
I_{r} & 0 \\
* & 0
\end{bmatrix} .
\end{equation}
As $F|_N$ is a diffeomorphism and $N$ is of real dimension $2n-2\ell$ or
$2n-2\ell-1$, we have that $r \geq n-\ell$.
By the implicit function theorem, there exists a holomorphic change of coordinates
in some small neighborhood $W$ of $p$ such that $p=0$ and
\begin{equation}
F(z) = (z_1,\ldots,z_r,*,\ldots,*) .
\end{equation}
Thus, 
\begin{equation}
DF =
\begin{bmatrix}
I_{n-\ell} & 0 \\
* & *
\end{bmatrix}
\end{equation}
in all of $W$.
For a small $\epsilon \not= 0$, 
the map defined on $W$ in the new coordinates by $G = F \oplus (\epsilon z_{n-\ell+1},\ldots,\epsilon z_n)$
is  close to $F \oplus 0$ in $W$, and $G|_{N \cap W}$ is a diffeomorphism.
Also, one of the $n \times n$
subdeterminants of $DG$ is $\epsilon^\ell$, which is nonzero.
Thus, $DG$ is of rank $n$ on $N \cap W$, and $G(N \cap W)$ is a CR submanifold.
\end{proof}


\section{Quadratic terms for a codimension-2 CR image} \label{section:quad}

In \cite{crext5}*{Corollary 7.2} the authors proved the following.

\begin{thm}\label{thm:codim2CRimagesextn2}
Let $(z,w) \in \C^n \times \C$, $n \geq 2$, be the coordinates and,
near the origin, let $M \subset \C^{n+1}$ be a codimension-2 submanifold given by
\begin{equation}
w = \rho(z,\bar{z}) = 
z^* A z + \overline{z^t B z} + z^t C z +
E(z, \bar{z}),
\end{equation}
where $\rho$ is real analytic,
$A,B,C$ are complex $n \times n$ matrices,  $B$ and $C$ are symmetric,
and $E$ is
$O(\snorm{z}^3)$.
If $M$ is a CR image, then the following hold.
\begin{enumerate}[(a)]
\item
$\rank \begin{bmatrix} A^* \\ B \end{bmatrix} \leq 1$.
\item
   Near the origin, $M$ is biholomorphically equivalent to exactly
   one of the following forms:
   \begin{enumerate}[(1)]
     \item $w = \bar{z}_1 z_2 + \bar{z}_1^2 + O(\snorm{z}^3)$,
     \item $w = \bar{z}_1 z_2 + O(\snorm{z}^3)$,
     \item $w = \sabs{z_1}^2 + a \bar{z}_1^2 + O(\snorm{z}^3)$, $a \geq 0$,
     \item $w = \bar{z}_1^2 + O(\snorm{z}^3)$,
     \item $w = O(\snorm{z}^3)$.
   \end{enumerate}
   Furthermore, examples exist for all five cases.
\end{enumerate}
\pagebreak[2] 
\end{thm}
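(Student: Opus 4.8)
The plan is to reduce the statement to an analysis of the quadratic part $\rho_2(z,\bar z) = z^*Az + \overline{z^tBz} + z^tCz$, since the realization of $M$ as a CR image of a generic $N\subset\C^n$ already constrains the $(1,1)$ and $(2,0)$ parts of $\rho$. First I would set up the CR-image structure: write $M = F(N)$ with $N$ generic in $\C^n$ (note $\dim M - \operatorname{CRdim} M = (2n+2-2)-(n-1) = n+1$, but since the CR singular locus has CR dimension $n-1$ the realization space is $n+1$-dimensional... actually one should be careful here; the point is that the extended abstract CR structure lives on a generic $N$ of dimension $2n+1$, i.e. a hypersurface, so $N$ sits in $\C^{n+1}$, not $\C^n$). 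Since $N$ is a real hypersurface, Proposition~\ref{prop:locCRsing} tells us $F$ is of rank $n+1$ away from a thin set and $F$ maps $N$ biholomorphically onto the intrinsic complexification of $M$ at CR points. Parametrizing $N$ near the relevant point and writing out $F = (f_1,\dots,f_n,g)$ with $g$ producing the $w$-component, one extracts algebraic relations among $A$, $B$, $C$ by matching the lowest-order terms of $\rho\circ(\text{parametrization of }N)$ against $g$ restricted to $N$.

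For part (a), the key computation is that the $w$-component pulled back to $N$ must, at the quadratic level, be expressible in terms of a single holomorphic coordinate (coming from the one CR direction that degenerates at the singularity). Concretely, after a linear change in $z$ that diagonalizes the relevant structure, the pullback forces $A$ and $B$ to have the form $A = \alpha v v^* + (\text{terms killed on }N)$ and $B$ proportional to $v v^t$ for a common vector $v$, which is exactly the assertion $\operatorname{rank}\begin{bmatrix} A^* \\ B\end{bmatrix} \le 1$. The symmetric matrix $C$ is unconstrained by this (it contributes a holomorphic term $z^tCz$ that can be absorbed into $w$ by the holomorphic coordinate change $w \mapsto w - z^tCz$), which is why $C$ drops out of the normal forms entirely.

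For part (b), I would first use the holomorphic change $w \mapsto w - z^tCz$ to kill the $(2,0)$ term, reducing $\rho_2$ to $z^*Az + \overline{z^tBz}$ with $\operatorname{rank}\begin{bmatrix}A^*\\B\end{bmatrix}\le 1$. Then a linear change $z \mapsto Lz$ (preserving the hypersurface structure) together with the residual scaling $w \mapsto cw$ normalizes the pair $(A,B)$. The rank-$\le 1$ condition means $A z = \bar\lambda\, \bar z_1\, v$-type expressions, and one splits into cases by whether $A$'s relevant entry vanishes, whether $B$'s does, and whether the $\bar z_1^2$ coefficient can be scaled to $0$ or $1$ versus leaving a genuine modulus $a \ge 0$ in the mixed case $|z_1|^2 + a\bar z_1^2$ (here $a$ is a modulus because the only freedom left after fixing the $|z_1|^2$ coefficient to $1$ is $z_1 \mapsto e^{i\theta}z_1$, which multiplies $\bar z_1^2$ by $e^{-2i\theta}$, so only $|a|$ is an invariant and the sign/phase can be removed — hence $a\ge 0$). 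Matching up the five resulting possibilities — $(\bar z_1 z_2 + \bar z_1^2)$, $(\bar z_1 z_2)$, $(|z_1|^2 + a\bar z_1^2)$, $(\bar z_1^2)$, $(0)$ — with their higher-order $O(\snorm z^3)$ tails gives the list. Finally, I would exhibit one explicit CR-image example realizing each case (e.g.\ products of Bishop-type surfaces $w = \bar z_1^2$, $w = |z_1|^2$ with flat directions, and the parabolic/hyperbolic mixed models), checking in each case that $M = F(N)$ for a suitable generic $N$ and holomorphic $F$ with $DF$ generically of full rank.

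The main obstacle is part (a): extracting the rank condition cleanly. The difficulty is that the CR-image hypothesis is a statement about the existence of \emph{some} $N$ and $F$, so one must choose coordinates on $N$ adapted to the degenerating CR direction and carefully track which quadratic monomials in the parametrization survive restriction to $N$ versus which are forced to vanish or to combine. Getting the bookkeeping right — separating the contribution of $A$ (the $(1,1)$ part), $B$ (the conjugated $(2,0)$ part), and the genuinely holomorphic $C$ part, and showing the first two are rank-coupled through a single vector — is where the real work lies; the normal-form reduction in (b) is then a fairly mechanical consequence of linear algebra on the pair $(A,B)$ modulo the allowed changes of coordinates, and the examples are constructed by reverse-engineering each normal form.
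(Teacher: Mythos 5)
First, a point of comparison: this paper does not actually prove Theorem~\ref{thm:codim2CRimagesextn2}; it is quoted verbatim from the authors' earlier work (\cite{crext5}, Corollary~7.2), so there is no in-paper proof to measure your argument against. Your proposal therefore has to stand on its own, and as written it has a genuine gap at its foundation. The dimension count for the realization $N$ is wrong. Here $M$ has real dimension $2n$ and $\operatorname{CRdim} M_{CR} = n-1$, so the extended structure is realized by a generic $N \subset \C^{n+1}$ of real dimension $2n$, i.e.\ of \emph{codimension two}, not a hypersurface of dimension $2n+1$ (the diffeomorphism $F|_N \colon N \to M$ forces $\dim N = \dim M = 2n$). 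This is not a cosmetic slip: the paper's remark after Proposition~\ref{prop:locCRsing} shows that if $N$ were a hypersurface then $F$ would be a biholomorphism onto the intrinsic complexification and $M$ would have no CR singularity at all, so the hypersurface framework cannot produce any of the nontrivial cases (1)--(4). Everything downstream that invokes ``since $N$ is a real hypersurface'' is therefore built on a false premise and must be redone with $N$ a codimension-2 generic submanifold, where the degeneration of $DF$ along $N$ is exactly the phenomenon being classified.

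The second, and larger, gap is that part (a) --- the actual content of the theorem --- is asserted rather than derived. You state that matching the quadratic terms of $\rho$ against the pullback $F|_N$ ``forces'' $A = \alpha v v^* + \cdots$ and $B$ proportional to $v v^t$ for a common vector $v$, but no mechanism is offered for why the existence of \emph{some} pair $(N,F)$ couples the Hermitian part $A$ and the antiholomorphic part $B$ through a single vector. That coupling is precisely the statement $\rank \begin{bmatrix} A^* \\ B \end{bmatrix} \leq 1$, so the proposal is circular at the decisive step. (Once (a) is granted, your reduction in (b) is essentially right: $z^t C z$ is absorbed by $w \mapsto w - z^t C z$, the rank condition writes the remaining quadratic part as $\overline{v^t z}\,(p^* z) + \beta\,\overline{(v^t z)}^2$, and the case split according to whether $p$ is zero, parallel to $\bar v$, or independent of $\bar v$, together with the scaling argument for the modulus $a \geq 0$, yields the five forms. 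But that linear algebra is the easy half.) To close the gap you would need an actual obstruction-type argument --- in \cite{crext5} this comes from the analysis of which real-analytic CR functions on $M_{CR}$ extend holomorphically --- showing that a quadratic part with $\rank \begin{bmatrix} A^* \\ B \end{bmatrix} \geq 2$ is incompatible with the CR structure extending as a real-analytic bundle through the singularity.
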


The purpose of this section is to show that, given any real-analytic CR
structure of codimension-2 type,
we can realize it as a codimension-2 CR image with any one of the five types
of quadratic terms.  Therefore, the extended CR structure of such a CR image
is, as a biholomorphic invariant, independent of the quadratic terms, and vice-versa.

\begin{prop}
Let $(z,w) \in \C^n \times \C$, $n \geq 2$, be the coordinates, and let $Q(z,\bar{z})$ be
one of the five types of quadratic terms from
Theorem~\ref{thm:codim2CRimagesextn2}.  That is, $Q$ is one of the following:
\begin{enumerate}[(1)]
\item $Q(z,\bar{z}) = \bar{z}_1 z_2 + \bar{z}_1^2$,
\item $Q(z,\bar{z}) = \bar{z}_1 z_2$,
\item $Q(z,\bar{z}) = \sabs{z_1}^2 + a \bar{z}_1^2$, $a \geq 0$,
\item $Q(z,\bar{z}) = \bar{z}_1^2$,
\item $Q(z,\bar{z}) = 0$.
\end{enumerate}
Let $N \subset \C^{n+1}$ be a connected real-analytic generic codimension-2 submanifold, and let $p \in N$.

Then there exist a neighborhood $U$ of $p$, a holomorphic mapping $F
\colon U \to \C^{n+1}$, and a real-analytic submanifold $M \subset \C^{n+1}$ defined by
\begin{equation}
w = Q(z,\bar{z}) + O(\snorm{z}^3)
\end{equation}
such that $F(p) = 0$, $F(N \cap U) \subset M$,
$DF$ is of rank $n+1$ on a dense open subset of $N \cap U$,
and $F|_{N \cap U}$ is a
diffeomorphism onto its image.
\end{prop}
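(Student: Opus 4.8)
The plan is to put $N$ into a linear normal form at $p$ and then write $F$ down essentially explicitly, imitating the classical parametrization of a Bishop surface. First I would choose an affine biholomorphic change of coordinates on $\C^{n+1}$ taking $p$ to the origin and $T_p N$ to the real subspace $V = \{\zeta \in \C^{n+1} : \zeta_2 = \bar{\zeta}_1\}$. This is possible: $H_p N = T_p N \cap iT_p N$ is a complex subspace of dimension $n-1$ (the CR dimension of $N$), so a $\C$-linear change makes $H_p N = \{\zeta_1 = \zeta_2 = 0\}$; then $T_p N$ is the preimage of a maximally totally real $2$-plane under the projection $\C^{n+1} \to \C^2_{(\zeta_1,\zeta_2)}$, and since every such plane is linearly equivalent to $\R^2$ and hence to $\{\zeta_2 = \bar{\zeta}_1\}$, a further block-diagonal $\C$-linear change moves $T_p N$ onto $V$. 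In these coordinates $T_0 N$ meets the $\zeta_2$-axis only at the origin, so near the origin $N$ is a graph
\begin{equation}
N = \bigl\{ \zeta_2 = H(\zeta_1,\zeta_3,\dots,\zeta_{n+1},\bar{\zeta}_1,\bar{\zeta}_3,\dots,\bar{\zeta}_{n+1}) \bigr\}
\end{equation}
over $\C^n_{(\zeta_1,\zeta_3,\dots,\zeta_{n+1})}$, with $H$ real analytic, $H(0) = 0$, and linear part of $H$ equal to $\bar{\zeta}_1$ (this last fact is equivalent to $T_0 N = V$).

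Next I would define $F$ as a polynomial map. Writing the target coordinates as $(z_1,\dots,z_n,w)$, set (after composing with the change of coordinates above)
\begin{equation}
F(\zeta) = \bigl( \zeta_1,\zeta_3,\zeta_4,\dots,\zeta_{n+1},\, g(\zeta) \bigr) ,
\end{equation}
so $z_1 = \zeta_1$, $z_j = \zeta_{j+1}$ for $2 \le j \le n$, and $w = g(\zeta)$, where $g$ is the holomorphic polynomial $\zeta_2\zeta_3 + \zeta_2^2$, $\zeta_2\zeta_3$, $\zeta_1\zeta_2 + a\zeta_2^2$, $\zeta_2^2$, or $\zeta_2^3$ in the respective cases $(1)$ through $(5)$. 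On $N$ we substitute $\zeta_2 = H$; since $(z_1,\dots,z_n) = (\zeta_1,\zeta_3,\dots,\zeta_{n+1})$, $H$ becomes a real-analytic function $H(z,\bar{z})$ and the image is the graph
\begin{equation}
M = \bigl\{ w = \rho(z,\bar{z}) \bigr\} , \qquad \rho(z,\bar{z}) = g\bigl( z_1, H(z,\bar{z}), z_2, \dots, z_n \bigr) .
\end{equation}
Then $\rho$ is real analytic, $F(p) = 0$ since $g(0) = 0$, and $F|_{N}$ is the graph map $z \mapsto (z,\rho(z,\bar{z}))$ precomposed with the coordinate change, hence an injective immersion; shrinking $U$ makes it a diffeomorphism onto $M$, so in particular $F(N \cap U) \subset M$.

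It then remains to check the $2$-jet and the rank. Since $H = \bar{z}_1 + O(\snorm{z}^2)$ and $g$ has no linear part, the $2$-jet of $\rho$ at the origin equals the quadratic part of $g$ with $\zeta_1,\zeta_2,\zeta_3,\dots,\zeta_{n+1}$ replaced by $z_1,\bar{z}_1,z_2,\dots,z_n$, which by the choice of $g$ is exactly $Q(z,\bar{z})$; hence $\rho = Q(z,\bar{z}) + O(\snorm{z}^3)$. The Jacobian determinant of $F$ is $\pm\,\partial g/\partial\zeta_2$ times the nonvanishing Jacobian of the coordinate change, and along $N$ it is $\partial g/\partial\zeta_2$ evaluated at $\zeta_2 = H$ — that is, $z_2 + 2\bar{z}_1 + O(\snorm{z}^2)$, $z_2$, $z_1 + 2a\bar{z}_1 + O(\snorm{z}^2)$, $2\bar{z}_1 + O(\snorm{z}^2)$, or $3\bar{z}_1^2 + O(\snorm{z}^3)$ in the five cases. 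In each case this is a real-analytic function that is not identically zero on the connected manifold $N$, so $DF$ has rank $n+1$ off a proper real-analytic subvariety of $N$, i.e., on a dense open subset of $N \cap U$. The one genuinely delicate step is the first: arranging the coordinates so that the conjugate $\bar{z}_1$ occurring in $Q$ is realized as $\zeta_2$ restricted to $N$ modulo higher-order terms — which is exactly what the normalization $T_p N = V$ buys, since then $\zeta_2 - \bar{\zeta}_1$ vanishes to second order along $N$ while $z_1 = \zeta_1$. After that, everything is a direct computation, uniform in $n$ and in the choice of $Q$.
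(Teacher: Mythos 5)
Your proof is correct and follows essentially the same route as the paper's: you normalize $N$ so that a single holomorphic coordinate ($\zeta_2$ in your notation, $\omega_1 - i\omega_2$ in the paper's normal form $\Im\omega = \rho(\zeta,\bar\zeta,\Re\omega)$) restricts on $N$ to $\bar{z}_1 + O(\snorm{z}^2)$, substitute it for $\bar{z}_1$ in $Q$ to build the last component of $F$, and replace $Q$ by $\bar{z}_1^3$ in case (5) to get generic rank $n+1$. The paper leaves the rank and diffeomorphism verifications as ``not hard to check''; your explicit computation of $\det DF = \pm\,\partial g/\partial\zeta_2$ and the graph description of $F|_N$ fill those in correctly.
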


\begin{proof}
First we assume that $Q$ is one of the types (1) through (4). Suppose $p=0$,
and let $N$ be defined near $0$ in coordinates
$(\zeta,\omega) \in \C^{n-1} \times \C^2$ by
\begin{equation}
\Im \omega_1 = \rho_1(\zeta,\bar{\zeta},\Re \omega) , \qquad
\Im \omega_2 = \rho_2(\zeta,\bar{\zeta},\Re \omega) ,
\end{equation}
where $\rho_1$ and $\rho_2$ are
$O(\snorm{\zeta}^2+\snorm{\Re \omega}^2)$.
The only barred variable in $Q$ is $\bar{z}_1$, so we write $Q(z,\bar{z}) = Q(z_1,\ldots,z_n,\bar{z}_1)$.
Define $F$ near $0$ by
\begin{align}
z_1 & = \omega_1 + i \omega_2 , \\
z_j & = \zeta_{j-1} , \quad j = 2,\ldots,n , \\
w & = Q(\omega_1 + i \omega_2,\zeta_1,\ldots,\zeta_{n-1},\omega_1 - i \omega_2) .
\end{align}
It is not hard to check that $F|_N$ is locally near 0 a diffeomorphism onto
its image and that $DF$ is of rank $n+1$ on a dense open subset of $N$ near
$0$.  So a CR image $M$ exists.  That $M$ is of the right form follows from
the form of the map.

If $Q$ is of type (5), for the map $F$ defined by the preceding construction
$DF$ does not have the desired rank. We simply modify the construction of
$F$ and replace $Q(z,\bar{z}_1)$ by $\widetilde{Q}(z,\bar{z}_1)=\bar{z}_1^3$
when we define the last component of $F$, so $w=(\omega_1 - i \omega_2)^3$.
\end{proof}

Thus, the type of the quadratic part $Q$ and the extended abstract
CR structure of $M$ are independent biholomorphic invariants.
There are also higher-order invariants.
Let us give an example.

\begin{example}\label{example:HighOrderInvariants}
Consider $M_1$ defined by $w = \bar{z}_1 z_2$ and
$M_2$ defined by $w = \bar{z}_1 z_2 + \bar{z}_1^3$,
as submanifolds of $\C^3$.  Both of these are
CR images of $N = \C \times \R^2$:  The first is the image of
$(\zeta,x,y) \mapsto (x+iy,\zeta,(x-iy)\zeta)$, and the
second is the image of
$(\zeta,x,y) \mapsto (x+iy,\zeta,(x-iy)\zeta+(x-iy)^3)$.
Note that $M_1$ and $M_2$ have the same quadratic
part $\bar{z}_1 z_2$; however, they are not 
equivalent under a biholomorphic change of variables.
This is already
evident by comparing the linear terms of a holomorphic mapping.
Consider near $0$ a holomorphic mapping
\begin{equation}
(a_1 z_1 + a_2 z_2 + a_3 w + \text{h.o.t.}, \quad
b_1 z_1 + b_2 z_2 + b_3 w + \text{h.o.t.}, \quad
c_1 z_1 + c_2 z_2 + c_3 w + \text{h.o.t.})
\end{equation}
that takes $M_2$ into $M_1$, where h.o.t.\ denotes the higher-order terms.  Then
\begin{multline}
c_1 z_1 + c_2 z_2 + c_3 (\bar{z}_1 z_2 + \bar{z}_1^3) + \text{h.o.t.}
\\
=
\overline{\bigl(a_1 z_1 + a_2 z_2 + a_3 (\bar{z}_1 z_2 + \bar{z}_1^3) + \text{h.o.t.}\bigr)}
\bigl(b_1 z_1 + b_2 z_2 + b_3 (\bar{z}_1 z_2 + \bar{z}_1^3) + \text{h.o.t.}\bigr)
\\
=
\bigl(\bar{a}_1 \bar{z}_1 + \bar{a}_2 \bar{z}_2 + \bar{a}_3 z_1 \bar{z}_2 + \bar{a}_3 z_1^3 +
\overline{\text{h.o.t.}}\bigr)
\bigl(b_1 z_1 + b_2 z_2 + b_3 \bar{z}_1 z_2 + b_3 \bar{z}_1^3 + \text{h.o.t.}\bigr).
\end{multline}
Note that the higher-order terms (h.o.t.)\ are all holomorphic (no bars).
We must have $c_1 = c_2 = 0$.  We also must have $c_3 = 0$ as there is no way
we can construct a $\bar{z}_1^3$ on the right side.  So this mapping is
not a biholomorphism.
\end{example}

A natural question therefore is to find all of the higher-order invariants.
However, the higher-order terms of the defining function combine the
extended abstract CR structure (the submanifold $N$) and invariants of the CR singular
embedding.  So part of the question is to disentangle the two.

\begin{question}\label{qn:HighOrderInvariants}
What are the higher-order biholomorphic invariants of codimension-2
CR singular CR images?
\end{question}


\section{Codimension-2 CR images in \texorpdfstring{$\C^3$}{C3}} 
\label{sec:dim3codim1}

In the preceding sections, we studied CR images by writing them as
diffeomorphic images of CR submanifolds under maps holomorphic on the
ambient space. In this section, we return to the intrinsic viewpoint: A CR
image has only removable CR singularities. We study in a specific setting
the extent to which the CR structure near a CR singularity determines
whether the singularity is removable.

Throughout this section, we let $M$ be a connected real-analytic codimension-2
submanifold of $\C^3$, and we assume that the CR dimension of $M_{CR}$ is 1.
After a biholomorphic change of coordinates, $M$ can be written locally near
a CR singularity as 
\begin{equation} \label{eq:formofM}
M:w=Q(z,\bar{z})+E(z,\bar{z}),
\end{equation}
where $0\in M$ is a CR singularity, $Q$ is quadratic, and $E=O(\norm{z}^3)$.
Let $\rho=Q+E$.
Treating $z$ as the coordinates on $M$, we have that $T^{0,1}M_{CR}$ is spanned by
\begin{equation}
    L=\rho_{\bar{z}_2}\frac{\partial}{\partial\bar{z}_1} - \rho_{\bar{z}_1}\frac{\partial}{\partial\bar{z}_2}.
\end{equation}

For $q\in M$, let $\sV_q$ be the closure of $T^{0,1}M_{CR}$ in $\C \otimes TM$
restricted to the fiber at $q$, i.e.,
\begin{equation}
\sV_q = {\overline{T^{0,1}M_{CR}}}^{\C \otimes TM} \vert_{q} .
\end{equation}
Note that if $q$ is a CR point of $M$, then $\sV_q = T^{0,1}_qM_{CR}$.

\begin{prop}
Let $M$ be as above.
If $p$ is a removable CR singularity of $M$,
then $\sV_p$ is a complex line,
meaning a one-dimensional complex vector subspace of $T^{0,1}_p\C^3$.
\end{prop}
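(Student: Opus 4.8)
The plan is to extract from removability a nonvanishing real-analytic local frame for the extended line bundle, and then to squeeze $\sV_p$ between $T^{0,1}M_{CR}$ and that bundle. Since $p$ is a removable CR singularity, by definition there is, on some neighborhood of $p$ in $M$, a real-analytic subbundle of $\C \otimes TM$ whose fiber at each CR point $q$ is $T^{0,1}_q M$. As $M_{CR}$ is dense and the CR dimension there equals $1$, this subbundle has rank $1$, so after shrinking the neighborhood it carries a real-analytic nonvanishing section $\widetilde{L}$; in particular $\widetilde{L}(q)$ spans $T^{0,1}_q M$ at every CR point $q$ near $p$. Since antiholomorphic tangent vectors of $M$ are antiholomorphic tangent vectors of $\C^3$ and $T^{0,1}\C^3$ is a continuous (trivial) bundle, every limit of such vectors lies in $T^{0,1}_p\C^3$; hence $\sV_p \subseteq T^{0,1}_p\C^3$.

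Next I would establish $\sV_p \subseteq \C\,\widetilde{L}(p)$. Given $v \in \sV_p$, by the definition of $\sV$ as a closure there are CR points $q_j \to p$ and vectors $v_j \in T^{0,1}_{q_j}M$ with $v_j \to v$; write $v_j = \lambda_j\,\widetilde{L}(q_j)$ with $\lambda_j \in \C$. If $v \neq 0$, then since $\widetilde{L}(q_j) \to \widetilde{L}(p) \neq 0$ the scalars $\lambda_j$ are bounded, and a subsequential limit $\lambda$ gives $v = \lambda\,\widetilde{L}(p)$. So $\sV_p$ is contained in the complex line $\C\,\widetilde{L}(p) \subseteq T^{0,1}_p\C^3$.

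For the reverse inclusion I would pick CR points $q_j \to p$, possible since $M_{CR}$ is dense. Each $\widetilde{L}(q_j)$ lies in $T^{0,1}_{q_j}M = \sV_{q_j} \subseteq \sV$ (using that at a CR point the closure fiber is $T^{0,1}_q M$), and $\widetilde{L}(q_j) \to \widetilde{L}(p)$, so $\widetilde{L}(p) \in \sV_p$ because $\sV$ is closed. Moreover $\sV_p$ is invariant under multiplication by complex scalars: each $\sV_q$ with $q \in M_{CR}$ equals the complex vector space $T^{0,1}_q M$, and this property is preserved under passing to limits. Therefore $\C\,\widetilde{L}(p) \subseteq \sV_p$, and combining with the previous step, $\sV_p = \C\,\widetilde{L}(p)$, a one-dimensional complex subspace of $T^{0,1}_p\C^3$.

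The argument is entirely soft; there is no computational step. The one point that deserves care — and is really the content of the statement — is that a priori the cone $\sV_p$ could be strictly larger than a single complex line: it is the set of all limiting directions of the antiholomorphic lines $T^{0,1}_q M$ as $q \to p$, and these could in principle sweep out a $2$- or $3$-dimensional variety. It is exactly the existence of the real-analytic extension that forces all these directions to converge to the common line $\C\,\widetilde{L}(p)$, while nonvanishing of $\widetilde{L}$ simultaneously prevents that line from degenerating to $\{0\}$.
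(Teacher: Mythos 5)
Your argument is correct and is essentially the paper's proof: the paper simply observes that removability gives a real-analytic rank-one bundle near $p$ and identifies its fiber at $p$ with $\sV_p$. You have merely spelled out the identification the paper leaves implicit, namely that the closure fiber $\sV_p$ coincides with the fiber $\C\,\widetilde{L}(p)$ of the extension bundle, using density of $M_{CR}$ and the nonvanishing of the local frame.
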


\begin{proof}
As $p\in M$ is a removable CR singularity,
$\sV$ is a real-analytic vector bundle near $p$.
The CR dimension of $M_{CR}$ is 1, so $\sV_p$ is a complex line.
\end{proof}

The following example shows that the converse of the above proposition
is false, even if $\sV_q$ is a complex line for all $q \in M$.

\begin{example}
Let $M\subset \C^3$ be given by
\begin{equation}
w=\bar{z}_1z_2 + \bar{z}_2^3.
\end{equation}
The following vector field spans $T^{0,1}M_{CR}$:
\begin{equation}
L=3\bar{z}_2^2\frac{\partial}{\partial\bar{z}_1}-z_2\frac{\partial}{\partial\bar{z}_2}.
\end{equation}
The set of CR singularities of $M$ is $\{z_2=0\}$,
which coincides with the set where $L$ vanishes.

Let $p \in M$ be a CR singularity.
The fiber $\sV_p$ is computed as a set of (topological) limits in $T^{0,1}\C^3$. 
The vector field $L$ divided by $z_2$ extends continuously through
the set of CR singularities,
and 
$\sV_p = \operatorname{span}_{\C} \frac{\partial}{\partial \bar{z}_2}\vert_p$,
a complex line.

If $0\in M$ were a removable CR singularity, then, by \cite{LMSSZ}, there
would exist a real-analytic function on $M$ which is CR on $M_{CR}$ and not
the restriction of a holomorphic function.
However, \cite{crext5}*{Example 3.3} shows that this is not possible.
Hence, $0$ is not a removable CR singularity of $M$.
\end{example}

In Proposition~\ref{prop:notBishopSmallSing}, we give conditions on $M$
under which the CR singularity is not removable.
One of these conditions is the exclusion of $M$ ``corresponding to" a
parabolic Bishop surface.
Informally, this means that the quadratic model of $M$ given by $w=Q$ is not
equivalent to a parabolic Bishop surface (see
Definition~\ref{defn:CorrParBishop}).
The following two examples show that, for submanifolds
corresponding to a parabolic Bishop surface, the removability 
of the CR singularity depends on the error term $E$
and hence on higher-order invariants of $M$.
(In fact, whether $\sV_p$ is a complex line also depends on $E$.)
For a related discussion, see Example \ref{example:HighOrderInvariants} and
Question \ref{qn:HighOrderInvariants}.
\begin{example}
Consider $M\subset\C^3$ defined by
\begin{equation}
w=\rho(z,\bar{z})=z_1\bar{z}_1 + \frac{\bar{z}_1^2}{2} + \frac{i}{2}\big(z_1\bar{z}_1^2+2\bar{z}_1z_2\bar{z}_2\big).
\end{equation}
 This submanifold corresponds to a parabolic Bishop surface, and 
$T^{0,1}M_{CR}$ is spanned by
\begin{equation}
    L
    =i\bar{z}_1z_2\frac{\partial}{\partial\bar{z}_1} - \big(z_1+\bar{z}_1+iz_1\bar{z}_1+iz_2\bar{z}_2\big)\frac{\partial}{\partial\bar{z}_2}.
\end{equation}
The only CR singularity of $M$ is $0$.  Note that $\rho_{\bar{z}_1}$
does not vanish on $M_{CR}$ and $\rho_{\bar{z}_2}/\rho_{\bar{z}_1}$ does not
have a limit as $\C^2\setminus\{0\}\ni z\to 0$.  Hence, $\sV_0$ is not a
complex line, and therefore the CR singularity is not removable.
\end{example}

The next example concerns a submanifold that corresponds to
a parabolic Bishop surface with a removable CR singularity
at $p=0$
(so $\sV_0$ is a complex line).
\begin{example}
Let $M\subset\C^3$ be defined by
$w=\rho(z,\bar{z})=Q(z,\bar{z})+E(z,\bar{z})$, with 
\begin{gather}
    Q = \frac{1}{2}\left(z_1+\bar{z}_1\right)^2 \text{ and }\\
    E = -z_1^2 z_2^2 \bar{z}_1^2 \bar{z}_2^2
    -i z_1^2 z_2^4 \bar{z}_1 \bar{z}_2^4
    + \frac{i}{3} z_1^2 \bar{z}_1^3
    + i z_2^2 \bar{z}_1 \bar{z}_2^2
    + i z_1 z_2^2 \bar{z}_2^2
    + \frac{1}{3} z_1^2 z_2^6 \bar{z}_2^6
    - \frac{1}{2} z_2^4 \bar{z}_2^4.
\end{gather}
Note that
\begin{align}
    \rho_{\bar{z}_1} &= z_1 + \bar{z}_1 -2z_1^2z_2^2\bar{z}_1\bar{z}_2^2
    -iz_1^2z_2^4\bar{z}_2^4
    +i\big(z_1^2\bar{z}_1^2+z_2^2\bar{z}_2^2\big) \text{ and }\\
    \rho_{\bar{z}_2} &= -2z_1^2z_2^2\bar{z}_1^2\bar{z}_2
    -4iz_1^2z_2^4\bar{z}_1\bar{z}_2^3
    +2iz_2^2\bar{z}_1\bar{z}_2
    +2iz_1z_2^2\bar{z}_2
    +2z_1^2z_2^6\bar{z}_2^5
    -2z_2^4\bar{z}_2^3\\
    &= 2iz_2^2\bar{z}_2\rho_{\bar{z}_1}.
\end{align}
Thus,  $\rho_{\bar{z}_1}$ has an isolated zero at $0\in \C^2$, and  ${\rho_{\bar{z}_2}}/{\rho_{\bar{z}_1}}$ extends
to be real analytic near $0$.
We see that $0$ is an isolated CR singularity of $M$, and $T^{0,1}M_{CR}$ is spanned by
\begin{equation}
    L = 2iz_2^2\bar{z}_2\frac{\partial}{\partial\bar{z}_1} - \frac{\partial}{\partial\bar{z}_2}.
\end{equation}
Hence, $0\in M$ is a removable CR singularity.
The extended abstract CR structure of $M$ near $0$ is given by $N\subset
\C^3$ defined by
\begin{equation}
    w=\bar{z}_1 + iz_2^2\bar{z}_2^2.
\end{equation}
\end{example}

We now define the notion of $M$ ``corresponding to'' a parabolic Bishop surface.

\begin{defn}\label{defn:CorrParBishop}
Let $M$ be as above, that is, given in the form \eqref{eq:formofM}.
We say that {\it $Q$ (or $M$) corresponds to a parabolic Bishop surface}
if there exists an invertible complex linear map $T\colon \C^2 \to \C^2$ such that
\begin{equation}
\overline{\partial}Q = T^\ast\big((\zeta_1+\bar{\zeta}_1)\,d\bar{\zeta}_1\big)
\end{equation}
where $(z,\bar{z})$ and $(\zeta,\bar{\zeta})$ are coordinates for
$\operatorname{domain}(T)$ and $\operatorname{range}(T)$ respectively,
and $T^*$ is the pullback.
\end{defn}

Since $M$ is defined by $w=\rho(z,\bar{z}) = Q(z,\bar{z})+E(z,\bar{z})$,
we could absorb or introduce holomorphic functions of $z$ in $\rho$ by
a biholomorphic change of coordinates on $\C^2\times \C$.
In light of this, it is not difficult to see that $Q$ corresponds to
a parabolic Bishop surface if and only if there exist holomorphic
coordinates $(z,w)\in\C^2\times\C$ in which
\begin{equation}
Q(z,\bar{z})=\frac{1}{2}\left(z_1+\bar{z}_1\right)^2.
\end{equation}

The following proposition describes conditions on $M$ under which
a CR singularity is not removable.
One of these conditions is $\overline{\partial}Q\not\equiv 0$.  This means
that $Q$ is not a holomorphic function of $z$ or, equivalently, that there
do not exist holomorphic coordinates on $\C^2\times\C$ in which $Q\equiv 0$.
So, the condition $\overline{\partial}Q\not\equiv 0$ translates to the
quadratic model of $M$, defined by $w=Q$, having a CR singularity at $0$.

\begin{prop}\label{prop:notBishopSmallSing}
Let $M$ be as above, that is, given in the form \eqref{eq:formofM}, with $0$
a CR singularity of $M$.  Suppose that $\overline{\partial}Q\not\equiv 0$,
$Q$ does not correspond to a parabolic Bishop surface, and the real
dimension (as a variety) of the CR singularities, near $0$, is less than
2.  Then $0$ is not a removable CR singularity of $M$.
\end{prop}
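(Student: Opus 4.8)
The plan is to assume that $0$ is a removable CR singularity and derive a contradiction, by showing the three hypotheses force $Q$ to correspond to a parabolic Bishop surface. Since $0$ is removable, $\sV$ is a real-analytic line bundle near $0$, so there is a real-analytic section $\sigma=\sigma^1\frac{\partial}{\partial\bar{z}_1}+\sigma^2\frac{\partial}{\partial\bar{z}_2}+\sigma^3\frac{\partial}{\partial\bar{w}}$ with $\sigma(0)\neq 0$. On the dense open set $M_{CR}$, the frame $\sigma$ spans $T^{0,1}M_{CR}=\operatorname{span}_{\C}L$, so $\sigma=\lambda L$ there for a nowhere-zero $\lambda$; comparing the $\frac{\partial}{\partial\bar{z}_1}$- and $\frac{\partial}{\partial\bar{z}_2}$-components of $L$, which are $\rho_{\bar{z}_2}$ and $-\rho_{\bar{z}_1}$, gives $\sigma^1\rho_{\bar{z}_1}+\sigma^2\rho_{\bar{z}_2}=0$ on $M_{CR}$ and hence, by real analyticity and density of $M_{CR}$, identically near $0$. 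Moreover $(\sigma^1(0),\sigma^2(0))\neq(0,0)$: the $\frac{\partial}{\partial\bar{w}}$-component of $L$ is $\rho_{\bar{z}_2}\overline{\rho_{z_1}}-\rho_{\bar{z}_1}\overline{\rho_{z_2}}$, which is $o\bigl(\sabs{\rho_{\bar{z}_1}}+\sabs{\rho_{\bar{z}_2}}\bigr)$ near $0$, so $\sigma^1(0)=\sigma^2(0)=0$ would force $\sigma^3(0)=0$, contradicting $\sigma(0)\neq 0$.

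After an invertible complex-linear change of coordinates in $z$ (which preserves genericity, the hypotheses on $Q$, and the CR singular set up to diffeomorphism) I may assume $(\sigma^1(0),\sigma^2(0))=(0,1)$. Then $\sigma^2$ is nowhere zero near $0$, so $g:=-\sigma^1/\sigma^2$ is real analytic with $g(0)=0$ and $\rho_{\bar{z}_2}=g\,\rho_{\bar{z}_1}$ identically near $0$. Comparing lowest-order terms yields $Q_{\bar{z}_2}\equiv 0$, so modulo holomorphic terms in $z$ we may write $Q=\alpha\,\bar{z}_1z_1+\beta\,\bar{z}_1z_2+\gamma\,\bar{z}_1^2$ with constants $\alpha,\beta,\gamma$ not all zero (since $\overline{\partial}Q\not\equiv 0$), and $\rho_{\bar{z}_1}=\alpha z_1+\beta z_2+2\gamma\bar{z}_1+O(\norm{z}^2)$. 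Because $\rho_{\bar{z}_2}=g\rho_{\bar{z}_1}$ vanishes wherever $\rho_{\bar{z}_1}$ does, the CR singular set near $0$ is exactly $S=\{\rho_{\bar{z}_1}=0\}$, the zero set of a single $\C$-valued real-analytic function.

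It then remains to estimate $\dim_{\R}S$. Writing $z_j=x_j+iy_j$, the differential $d\rho_{\bar{z}_1}(0)\colon\R^4\to\R^2\cong\C$ sends $\frac{\partial}{\partial x_1},\frac{\partial}{\partial y_1},\frac{\partial}{\partial x_2},\frac{\partial}{\partial y_2}$ to $\alpha+2\gamma,\ i(\alpha-2\gamma),\ \beta,\ i\beta$ respectively. If $\beta\neq 0$, or if $\beta=0$ and $\sabs{\alpha}^2\neq 4\sabs{\gamma}^2$ (the latter being exactly the condition $\Im\bigl(i\,\overline{\alpha+2\gamma}\,(\alpha-2\gamma)\bigr)=\sabs{\alpha}^2-4\sabs{\gamma}^2\neq 0$ that makes $\alpha+2\gamma$ and $i(\alpha-2\gamma)$ real-linearly independent), then $d\rho_{\bar{z}_1}(0)$ is surjective, so by the implicit function theorem $S$ is a $2$-dimensional submanifold through $0$, contradicting $\dim_{\R}S<2$. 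Hence $\beta=0$ and $\sabs{\alpha}=2\sabs{\gamma}$; together with $\overline{\partial}Q\not\equiv 0$ this forces $\alpha\neq 0$ and $\gamma\neq 0$. A rescaling of $z_1$ by $\mu$ with $\sabs{\mu}=1$ and $\bar{\mu}^2=\alpha/(2\gamma)$ (possible since $\sabs{\alpha/(2\gamma)}=1$) together with a rescaling of $w$ brings $Q$ to $\bar{z}_1z_1+\tfrac12\bar{z}_1^2=\tfrac12(z_1+\bar{z}_1)^2-\tfrac12 z_1^2$, i.e., to $\tfrac12(z_1+\bar{z}_1)^2$ modulo a holomorphic term; so $Q$ corresponds to a parabolic Bishop surface, contradicting the hypothesis. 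Therefore $0$ is not removable.

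The main obstacle is the second step: extracting from bare removability the clean factorization $\rho_{\bar{z}_2}=g\,\rho_{\bar{z}_1}$ with $g$ real analytic and $g(0)=0$ (equivalently, that one of the quotients $\rho_{\bar{z}_2}/\rho_{\bar{z}_1}$, $\rho_{\bar{z}_1}/\rho_{\bar{z}_2}$ of the coefficients of $L$ extends real-analytically across $0$ with a zero there), which collapses the two defining equations of the CR singular set to one. Once $S$ is exhibited as the zero set of $\rho_{\bar{z}_1}$ alone, the rank computation for $d\rho_{\bar{z}_1}(0)$ and the identification of the single surviving case with the parabolic Bishop model are routine.
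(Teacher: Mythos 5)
Your proof is correct, and it takes a genuinely different route from the paper's. The paper's proof invokes the classification theorem (Theorem~\ref{thm:codim2CRimagesextn2}, imported from \cite{crext5}): removability forces $M$ into one of five quadratic normal forms, and in every form that is neither parabolic Bishop (case (3) with $a=1/2$) nor $\overline{\partial}Q\equiv 0$ (case (5)), the linear parts of $\rho_{\bar z_1}$ and $\overline{\rho_{\bar z_1}}$ are independent, so the CR singular locus $\{\rho_{\bar z_1}=0\}$ is $2$-dimensional, contradicting the dimension hypothesis. You instead argue intrinsically and essentially from scratch: from a nonvanishing real-analytic frame $\sigma$ of the extended bundle you extract the identity $\sigma^1\rho_{\bar z_1}+\sigma^2\rho_{\bar z_2}\equiv 0$, observe that $(\sigma^1(0),\sigma^2(0))\neq(0,0)$ (correct, since tangency to $M$ forces $\sigma^3=\sigma^1\overline{\rho_{z_1}}+\sigma^2\overline{\rho_{z_2}}$, which vanishes at $0$), normalize to get $\rho_{\bar z_2}=g\,\rho_{\bar z_1}$ with $g$ real analytic and $g(0)=0$ --- this is in substance the forward direction of the paper's later Proposition~\ref{prop:quotientanalitic}, sharpened by the vanishing of $g$ at the origin --- and then conclude $Q_{\bar z_2}\equiv 0$, $S=\{\rho_{\bar z_1}=0\}$, and run the rank computation on $d\rho_{\bar z_1}(0)$, whose only degenerate case ($\beta=0$, $\sabs{\alpha}=2\sabs{\gamma}$) is the parabolic Bishop model. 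In effect you re-derive exactly the fragment of the classification that is needed; this makes the argument self-contained (independent of \cite{crext5}) and identifies explicitly which quadratic parts are compatible with a small singular locus, at the cost of being longer than the paper's appeal to the external theorem, which handles all five cases in a few lines.

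One caveat, which you share with the paper: your final normalization rescales $w$ by $1/\alpha$, which need not be a positive real, so you are using the characterization ``there exist holomorphic coordinates on $\C^2\times\C$ in which $Q=\tfrac12(z_1+\bar z_1)^2$'' rather than the literal Definition~\ref{defn:CorrParBishop}, in which a linear map of $\C^2$ alone forces the coefficient of $\sabs{z_1}^2$ in $Q$ to be a positive real number. The paper asserts the two formulations are equivalent, and the proposition is only correct under the coordinate formulation (e.g., $Q=i\sabs{z_1}^2+\tfrac{i}{2}\bar z_1^2$ fails the literal definition yet admits a removable, isolated CR singularity), so your reading is the intended one; it is just worth flagging that this step genuinely requires the $w$-rescaling.
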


\begin{proof}
Suppose $M$ satisfies the hypothesis of this proposition.
We argue by contradiction.
Assume $0$ is a removable CR singularity of $M$.
Then, by a previous result of the authors (see Theorem~\ref{thm:codim2CRimagesextn2}),
$M$ is biholomorphically equivalent, near the origin,
to exactly one of the following forms:
\begin{enumerate}[(1)]
     \item $w = \bar{z}_1 z_2 + \bar{z}_1^2 + O(\snorm{z}^3)$,
     \item $w = \bar{z}_1 z_2 + O(\snorm{z}^3)$,
     \item $w = \sabs{z_1}^2 + a \bar{z}_1^2 + O(\snorm{z}^3)$, $a \geq 0$,
     \item $w = \bar{z}_1^2 + O(\snorm{z}^3)$,
     \item $w = O(\snorm{z}^3)$.
\end{enumerate}

Cases (1) and (2): Suppose $M$ is defined by
\begin{equation}
w= \rho(z,\bar{z})=\bar{z}_1 z_2 + \epsilon\bar{z}_1^2 + E(z,\bar{z})
\end{equation}
where $\epsilon\in\{0,1\}$.
Then $T^{0,1}M_{CR}$ is spanned by
\begin{equation}
    L=\rho_{\bar{z}_2}\frac{\partial}{\partial\bar{z}_1} - \rho_{\bar{z}_1}\frac{\partial}{\partial\bar{z}_2}
    =E_{\bar{z}_2}\frac{\partial}{\partial\bar{z}_1} - \big(z_2+2\epsilon\bar{z}_1+E_{\bar{z}_1}\big)\frac{\partial}{\partial\bar{z}_2}.
\end{equation}
Since $0$ is a removable CR singularity, we must have that
$\rho_{\bar{z}_2} = E_{\bar{z}_2}$ vanishes on
\begin{equation}
    S=\{(z,\rho(z,\bar{z}))\in M\,\colon\, \rho_{\bar{z}_1}=z_2+2\epsilon\bar{z}_1+E_{\bar{z}_1}=0\}.
\end{equation}
So, the set of CR singularities of $M$ is $S$, and $\dim_\R S = 2$ because
the linear parts of $\rho_{\bar{z}_1}$ and $\overline{\rho_{\bar{z}_1}}$ are
linearly independent.

Case (3) is argued similarly when $a\ne 1/2$. If $a=1/2$, then $Q$
corresponds to a parabolic Bishop surface.

Case (4) is handled along the same lines as cases (1) and (2).

Case (5) means $\overline{\partial}Q \equiv 0$.
\end{proof}

It is clear from the discussion so far that, for the CR singularity $0\in
M$, $\sV_0$ is a complex line if and only if
\begin{equation}\label{eqn:division}
\lim\limits_{z\to 0}\, \frac{\rho_{\bar{z}_1}}{\rho_{\bar{z}_2}}
\quad\text{or}\quad
\lim\limits_{z\to 0}\, \frac{\rho_{\bar{z}_2}}{\rho_{\bar{z}_1}}
\quad\text{exists,}
\end{equation}
since $M_{CR}$ has CR dimension 1 and $T^{0,1}M_{CR}$ is spanned by
\begin{equation} \label{eq:theL}
    L=\rho_{\bar{z}_2}\frac{\partial}{\partial\bar{z}_1} - \rho_{\bar{z}_1}\frac{\partial}{\partial\bar{z}_2}.
\end{equation}
The limit considered above is topological.
So, when $0$ is a removable CR singularity of $M$,
the CR structure on $M_{CR}$ extends to an abstract
topological CR structure $\sV$ on $M$.
A necessary and sufficient condition for the singularity to be removable is that
one of the quotients be real analytic.

\begin{prop} \label{prop:quotientanalitic}
Let $M$ be as above, that is,
given in the form \eqref{eq:formofM},
with $0$ a CR singularity of $M$.  Then this CR singularity
is removable if and only if either
\begin{equation}\label{eqn:divisionanalitic}
\frac{\rho_{\bar{z}_1}}{\rho_{\bar{z}_2}}
\quad\text{or}\quad
\frac{\rho_{\bar{z}_2}}{\rho_{\bar{z}_1}}
\end{equation}
is (extends to) a real-analytic function near the origin.
\end{prop}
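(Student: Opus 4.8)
The plan is to prove this by reducing it to the intrinsic characterization of removability and the realization of an abstract real-analytic CR structure as a generic submanifold. For the forward direction, suppose $0$ is a removable CR singularity. Then $(M,\sV)$ is a real-analytic abstract CR manifold near $0$, so by \cite{BER:book}*{Theorem 2.1.11} it can be realized locally as a generic real-analytic submanifold $N \subset \C^2$ of CR dimension $1$ (real codimension $1$, so $N$ is a real hypersurface in $\C^2$), together with a diffeomorphism $f \colon N \to M$ that is a CR diffeomorphism on the CR points. Parametrize $N$ by a single complex coordinate $\zeta$ on the abstract manifold; pushing the abstract CR vector field forward via $f$ and using $z$ as the coordinate on $M$, the point is that the extended CR bundle $\sV$ is spanned near $0$ by a \emph{real-analytic} nonvanishing vector field $\widetilde{L} = a(z,\bar z)\,\frac{\partial}{\partial \bar z_1} + b(z,\bar z)\,\frac{\partial}{\partial \bar z_2}$, where $a,b$ are real analytic and $(a,b)$ does not vanish. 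At CR points, $\widetilde{L}$ must be a multiple of $L$ from \eqref{eq:theL}, so $a\,\rho_{\bar z_1} = -b\,\rho_{\bar z_2}$ — more precisely $(a,b)$ is proportional to $(\rho_{\bar z_2}, -\rho_{\bar z_1})$ — as real-analytic functions on $M_{CR}$, hence identically. Since $(a,b)$ is nonvanishing, at least one of $a$, $b$ is nonzero at $0$; if $b(0) \ne 0$ then $\rho_{\bar z_1}/\rho_{\bar z_2} = -a/b$ is real analytic near $0$, and if $a(0) \ne 0$ then $\rho_{\bar z_2}/\rho_{\bar z_1} = -b/a$ is real analytic near $0$.

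For the converse, suppose $\rho_{\bar z_2}/\rho_{\bar z_1}$ extends to a real-analytic function $h$ near $0$ (the other case is symmetric). Then on $M_{CR}$ the field $L$ is a nonvanishing multiple of $\widetilde{L} := h\,\frac{\partial}{\partial\bar z_1} - \frac{\partial}{\partial\bar z_2}$, which is real analytic and nowhere vanishing (the $\frac{\partial}{\partial\bar z_2}$ coefficient is $-1$). Define $\sV$ near $0$ to be the line bundle spanned by $\widetilde{L}$. On the dense open set $M_{CR}$ this agrees with $T^{0,1}M_{CR}$, so $\sV$ is a real-analytic vector bundle on $M$ whose fiber over each CR point is $T^{0,1}_pM$. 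This is exactly the defining condition for $0$ to be a removable CR singularity.

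The one technical point that needs care is the forward direction: extracting from the realization theorem the precise statement that $\sV$ is spanned by a \emph{real-analytic} vector field with values in $T^{0,1}\C^3$, rather than merely an abstract line subbundle of $\C\otimes TM$. The subtlety is that a priori removability only hands us a real-analytic line subbundle $\sV \subset \C\otimes TM$; one must verify that a real-analytic local frame for $\sV$ exists (standard, as real-analytic line bundles over a manifold are locally trivial) and that this frame, written in the ambient coordinates $\frac{\partial}{\partial\bar z_1}, \frac{\partial}{\partial\bar z_2}$ (using $z$ as coordinates on $M$), has real-analytic coefficients — which follows because the inclusion $\C\otimes TM \hookrightarrow \C\otimes T\C^3|_M$ is real-analytic and $\sV \subset T^{0,1}\C^3$ since this holds on the dense set $M_{CR}$. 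Once this is in hand, the comparison with $L$ at CR points and the identity-theorem argument above finishes the proof. The reverse direction is essentially immediate once one observes that $\widetilde{L}$ is manifestly nonvanishing because one of its coefficients is the constant $-1$ (or $1$), which is what makes the normalized quotient, rather than the raw pair $(\rho_{\bar z_1},\rho_{\bar z_2})$, the right object.
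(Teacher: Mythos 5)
Your proof is correct and follows essentially the same route as the paper: extract a real-analytic nonvanishing frame $\widetilde{L}=a\,\partial/\partial\bar z_1+b\,\partial/\partial\bar z_2$ for the extended bundle, compare it with $L$ at CR points, and conclude by the identity theorem and division (the paper simply normalizes $b\equiv -1$ instead of splitting into the cases $a(0)\neq 0$ and $b(0)\neq 0$, and it does not need the realization theorem, since the definition of removability already supplies $\sV$ as a real-analytic subbundle of $\C\otimes TM$). The only slip is that when $b(0)\neq 0$ the relation $a\,\rho_{\bar z_1}=-b\,\rho_{\bar z_2}$ gives that $\rho_{\bar z_2}/\rho_{\bar z_1}=-a/b$ is real analytic, not $\rho_{\bar z_1}/\rho_{\bar z_2}$ (and symmetrically in the other case), but this merely swaps which of the two quotients in the disjunction is the real-analytic one and does not affect the conclusion.
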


\begin{proof}
Suppose $0$ is a removable CR singularity.
Then, near 0, there exists a nonvanishing real-analytic vector field
\begin{equation}\label{eq:ltilde}
\widetilde{L} = a \frac{\partial}{\partial\bar{z}_1} + b \frac{\partial}{\partial\bar{z}_2}
\end{equation}
that spans $T^{0,1} M$ at CR points.
Without loss of generality, we may assume that $b \equiv -1$.
At least one of
$\rho_{\bar{z}_1}$ and 
$\rho_{\bar{z}_2}$ is nonzero at CR points, and at those points
\begin{equation}\label{eq:theL2} \rho_{\bar{z}_2} \frac{\partial}{\partial\bar{z}_1} - \rho_{\bar{z}_1}
\frac{\partial}{\partial\bar{z}_2}\end{equation} spans $T^{0,1} M$  
(which is of dimension 1). Since $\widetilde{L}$
also spans $T^{0,1} M$ at CR points,
$\rho_{\bar{z}_1}$ cannot be identically zero.  Thus,
\begin{equation} 
\frac{\rho_{\bar{z}_2}}{\rho_{\bar{z}_1}} \frac{\partial}{\partial\bar{z}_1}
- \frac{\partial}{\partial\bar{z}_2}
\end{equation}
must be equal to $\widetilde{L}$
at all points where $\rho_{\bar{z}_1} \not= 0$.  Then
$\rho_{\bar{z}_2}/\rho_{\bar{z}_1} = a$ at these points, so $\rho_{\bar{z}_2}/\rho_{\bar{z}_1}$ extends to be real analytic near the origin.

For the other direction, recall that the vector field \eqref{eq:theL2}
spans $T^{0,1}M_{CR}$.  If one of the quotients is real analytic, then 
we can divide and get a nonvanishing real-analytic vector field  
giving the extended structure.
\end{proof}

One question is whether we can we ask less of the quotient
than being real analytic.
The following example shows that it is possible for $\sV$
to be a $C^k$ bundle but not a $C^{\ell}$ bundle for any ${\ell}>k$.
In other words, a CR singularity may be removable in a $C^k$ sense but not a
$C^{\ell}$ sense.

\begin{example}
Let $k \geq 0$. Consider $M$ given by
\begin{equation}
w=\bar{z}_1 z_2 + \bar{z}_2^{k+3}.
\end{equation}
Then $\rho_{\bar{z}_2}/\rho_{\bar{z}_1}$ is a $C^k$ function near
$z=0$ that vanishes at $z=0$, but it is not $C^{\ell}$ for ${\ell}>k$.
\end{example}

In contrast, if $\sV$ is a $C^\infty$ bundle on $M$,
then it is, in fact, a $C^\omega$ bundle.
First we need a small lemma.

\begin{lemma} \label{lemma:smoothdivision}
Suppose that $\alpha$ and $\beta$ are two real-analytic functions defined on
some connected open set,
$\beta$ is not identically zero,
and $\alpha/\beta$ is smooth, in the sense that there exists a $C^\infty$
function $\gamma$ such that $\alpha = \gamma \beta$.  Then $\gamma$ is real analytic.
\end{lemma}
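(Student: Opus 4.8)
The plan is to work locally and use the Weierstrass preparation and division theorems to reduce the problem to the uniqueness clause of formal Weierstrass division, which forces the relevant remainder to vanish.

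Since real-analyticity is a local property, I fix a point $p$ of the common domain and show $\gamma$ is real analytic near $p$. If $\beta(p)\neq 0$, then near $p$ we have $\gamma=\alpha/\beta$, a quotient of real-analytic functions with nonvanishing denominator, hence real analytic. So assume $\beta(p)=0$. As $\beta$ is real analytic and not identically zero on the connected domain, its zero set is nowhere dense, so $\beta$ is not identically zero near $p$ and its Taylor expansion at $p$ is nonzero; picking a (real) direction in which the lowest-order homogeneous part of that expansion does not vanish and making a linear change of coordinates taking $p$ to the origin, we may assume $\beta$ is $x_n$-regular at the origin, vanishing to some finite order $d\geq 1$ in $x_n$. (That $\alpha,\beta,\gamma$ may be complex valued causes no trouble, since the Weierstrass-type theorems below hold for $\C$-valued real-analytic functions, e.g.\ by complexification.) By the Weierstrass preparation theorem, near the origin $\beta=u\,W$ with $u$ a nonvanishing real-analytic function and $W(x',x_n)=x_n^d+c_{d-1}(x')x_n^{d-1}+\cdots+c_0(x')$ a distinguished (Weierstrass) polynomial, the $c_j$ real analytic with $c_j(0)=0$. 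Thus $\alpha=\gamma\beta=(\gamma u)\,W$, where $\gamma u$ is smooth. By the Weierstrass division theorem, near the origin $\alpha=q\,W+r$ with $q$ real analytic and $r=\sum_{j=0}^{d-1}r_j(x')\,x_n^j$, each $r_j$ real analytic; hence $(\gamma u-q)\,W=r$.

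Now I pass to formal Taylor series at the origin; write $\widehat{g}$ for the Taylor series of a smooth germ $g$. The identity $(\gamma u)\,W=q\,W+r$ becomes $\widehat{\gamma u}\,\widehat{W}=\widehat{q}\,\widehat{W}+\widehat{r}$ in the formal power series ring. Here $\widehat{W}$ is a distinguished polynomial of $x_n$-degree $d$ and $\widehat{r}$ is a polynomial in $x_n$ of degree $<d$, so the element $\widehat{\gamma u}\,\widehat{W}$ is expressed as a multiple of $\widehat{W}$ plus a remainder of $x_n$-degree $<d$ in two ways, namely with remainders $0$ and $\widehat{r}$; the uniqueness in the formal Weierstrass division theorem then gives $\widehat{r}=0$. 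As $r$ is real analytic near the origin, $r\equiv 0$ there, so $\gamma\beta=\alpha=q\,W=(\beta/u)\,q$, whence $\gamma=q/u$ on the dense set $\{\beta\neq 0\}$; both sides being continuous, $\gamma=q/u$ near the origin, which is real analytic. Since $p$ was arbitrary, $\gamma$ is real analytic throughout its domain. (Alternatively, Malgrange's smooth division theorem together with its uniqueness clause gives $\gamma u=q$ directly, bypassing the formal computation.)

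The one genuinely delicate step is the vanishing of the remainder $r$: one cannot simply divide by $W$ in the formal power series ring, since $W$ is not a unit there (it vanishes at the origin), so the argument must invoke the division theorem and its uniqueness rather than a naive inversion. Everything else is routine bookkeeping.
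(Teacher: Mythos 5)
Your proof is correct and follows essentially the same route as the paper's: Weierstrass preparation applied to $\beta$, Weierstrass division of $\alpha$, and the uniqueness clause of formal division to force the remainder to vanish, followed by identification of $\gamma$ with the analytic quotient on the dense set $\{\beta \neq 0\}$. The only cosmetic difference is that you carry the unit $u$ explicitly where the paper absorbs it into $\beta$ at the outset.
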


\begin{proof}
Fix a point $p$.
After a rotation we apply the
Weierstrass preparation theorem to $\beta$; 
after dividing by a unit we may,
without loss of generality, assume that  
$\beta$ is a Weierstrass polynomial.
Let
$\widetilde{\alpha}$,
$\widetilde{\beta}$,
$\widetilde{\gamma}$ denote the formal power series at $p$.
As formal power series
we have
\begin{equation}
\widetilde{\alpha}
=
\widetilde{\gamma}
\widetilde{\beta} .
\end{equation}
We can also apply the real-analytic Weierstrass division theorem to
find a convergent power series $q$ and a remainder $r$ such that
$\alpha = q \beta + r$.  Since the
series $q$ and $r$ are unique and are the same as if we had done this division
formally, we find that $r=0$ and $q=\widetilde{\gamma}$.  In particular, the series for 
$\widetilde{\gamma}$ converges.  Thus, from the real-analytic division
theorem, we find that $\alpha = q \beta$ as functions.  Hence, $q=\gamma$ on
the set where $\beta \not= 0$, and as that is a thin set we find that
$q=\gamma$ on a neighborhood of $p$.
\end{proof}

\begin{prop}
Let $M$ be as above, that is, given in the form \eqref{eq:formofM},
with $0$ a CR singularity of $M$.
Suppose there exists a $C^\infty$ bundle $\sV$ near $0$
that agrees with the CR structure of $M_{CR}$. Then $\sV$
is a $C^{\omega}$ bundle near $0$, that is, $0$ is a removable CR singularity.
\end{prop}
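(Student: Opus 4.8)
The plan is to deduce the statement from Lemma~\ref{lemma:smoothdivision} together with Proposition~\ref{prop:quotientanalitic}. By Proposition~\ref{prop:quotientanalitic}, it suffices to show that one of the quotients $\rho_{\bar z_1}/\rho_{\bar z_2}$, $\rho_{\bar z_2}/\rho_{\bar z_1}$ extends to be real analytic near $0$. So the whole proof is really about promoting the given $C^\infty$ frame of $\sV$ to a real-analytic one, and the regularity gain comes entirely from the division lemma.

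First I would dispose of the degenerate cases. Since $0$ is a CR singularity, $\rho$ is not holomorphic near $0$, so $\rho_{\bar z_1}$ and $\rho_{\bar z_2}$ do not both vanish identically. If exactly one of them vanishes identically, say $\rho_{\bar z_2}\equiv 0$, then $\rho_{\bar z_2}/\rho_{\bar z_1}\equiv 0$ is real analytic and we are done by Proposition~\ref{prop:quotientanalitic}; the case $\rho_{\bar z_1}\equiv 0$ is symmetric. Hence we may assume that neither $\rho_{\bar z_1}$ nor $\rho_{\bar z_2}$ is identically zero.

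Now, using $z$ as coordinates on $M$, the bundle $\C\otimes TM$ is spanned by $\partial_{z_1},\partial_{z_2},\partial_{\bar z_1},\partial_{\bar z_2}$, and $T^{0,1}M_{CR}$ is spanned by $L=\rho_{\bar z_2}\partial_{\bar z_1}-\rho_{\bar z_1}\partial_{\bar z_2}$, which lies in $\operatorname{span}\{\partial_{\bar z_1},\partial_{\bar z_2}\}$. Pick a nonvanishing $C^\infty$ local frame $\widetilde L$ of $\sV$ near $0$. On the open dense set $M_{CR}$ we have $\sV=T^{0,1}M_{CR}$, so $\widetilde L$ is a nonzero multiple of $L$ there; in particular the $\partial_{z_1}$- and $\partial_{z_2}$-components of $\widetilde L$ vanish on $M_{CR}$, hence by continuity everywhere, so $\widetilde L=a\,\partial_{\bar z_1}+b\,\partial_{\bar z_2}$ with $a,b\in C^\infty$ and $(a(0),b(0))\neq(0,0)$. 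Assume $b(0)\neq 0$ (the case $a(0)\neq 0$ is symmetric) and shrink so that $b$ is nonvanishing. Proportionality of $L$ and $\widetilde L$ on $M_{CR}$ reads $\rho_{\bar z_2}\,b+\rho_{\bar z_1}\,a=0$ there, that is $\rho_{\bar z_2}=\gamma\,\rho_{\bar z_1}$ with $\gamma:=-a/b\in C^\infty$; since $M_{CR}$ is dense and all functions involved are continuous, this identity holds on a full neighborhood of $0$. This density/continuity step is the only mildly delicate point in the argument; everything else is bookkeeping.

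Finally I would apply Lemma~\ref{lemma:smoothdivision} with $\alpha=\rho_{\bar z_2}$ and $\beta=\rho_{\bar z_1}$ (both real analytic, $\beta\not\equiv 0$, and $\alpha=\gamma\beta$ with $\gamma\in C^\infty$) to conclude that $\gamma$ is real analytic. Thus $\rho_{\bar z_2}/\rho_{\bar z_1}=\gamma$ extends to a real-analytic function near $0$, so $0$ is a removable CR singularity by Proposition~\ref{prop:quotientanalitic}. Concretely, $L=\rho_{\bar z_1}\bigl(\gamma\,\partial_{\bar z_1}-\partial_{\bar z_2}\bigr)$, so $\gamma\,\partial_{\bar z_1}-\partial_{\bar z_2}$ is a nonvanishing real-analytic section agreeing with a frame of $\sV$ on the dense set $M_{CR}$, hence (the condition of lying in $\sV_q$ being closed) a real-analytic frame for $\sV$ on a neighborhood of $0$. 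Therefore $\sV$ is a $C^{\omega}$ bundle near $0$, as claimed.
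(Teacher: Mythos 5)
Your proposal is correct and follows essentially the same route as the paper: take a $C^\infty$ frame $\widetilde{L}=a\,\partial_{\bar z_1}+b\,\partial_{\bar z_2}$ of $\sV$, use proportionality with $L$ on the dense set $M_{CR}$ to get $\rho_{\bar z_2}=\gamma\,\rho_{\bar z_1}$ with $\gamma$ smooth, upgrade $\gamma$ to real analytic via Lemma~\ref{lemma:smoothdivision}, and conclude with Proposition~\ref{prop:quotientanalitic}. The extra care you take with the degenerate cases and the density/continuity step is a harmless elaboration of details the paper leaves implicit.
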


\begin{proof}
Let us work locally near $0$.
Let $\widetilde{L}$ be the vector field \eqref{eq:ltilde},
but now with $a$ and $b$ being $C^\infty$.  Again assume $b\equiv -1$.
As in the proof of 
Proposition~\ref{prop:quotientanalitic},
we find that  $\rho_{\bar{z}_1}$ cannot be identically zero and
$\rho_{\bar{z}_2}  = a \rho_{\bar{z}_1}$.
We apply Lemma~\ref{lemma:smoothdivision} to find that $a$ is real analytic,
and therefore $0$ is a removable CR singularity.
\end{proof}


\def\MR#1{\relax\ifhmode\unskip\spacefactor3000 \space\fi%
  \href{http://mathscinet.ams.org/mathscinet-getitem?mr=#1}{MR#1}}

\begin{bibdiv}
\begin{biblist}

\bib{BER:book}{book}{
  author={Baouendi, M. Salah},
  author={Ebenfelt, Peter},
  author={Rothschild, Linda Preiss},
  title={Real submanifolds in complex space and their mappings},
  series={Princeton Mathematical Series},
  volume={47},
  publisher={Princeton University Press, Princeton, NJ},
  date={1999},
  pages={xii+404},
  isbn={0-691-00498-6},
  review={\MR{1668103}},
}

\bib{BRT}{article}{
   author={Baouendi, M.\ S.},
   author={Rothschild, L.\ P.},
   author={Tr\'{e}preau, J.-M.},
   title={On the geometry of analytic discs attached to real manifolds},
   journal={J.\ Differential Geom.},
   volume={39},
   date={1994},
   number={2},
   pages={379--405},
   issn={0022-040X},
   review={\MR{1267896}},
}

\bib{Bishop65}{article}{
   author={Bishop, Errett},
   title={Differentiable manifolds in complex Euclidean space},
   journal={Duke Math.\ J.},
   volume={32},
   date={1965},
   pages={1--21},
   issn={0012-7094},
   review={\MR{0200476}},
}

\bib{Burcea}{article}{
  author={Burcea, Valentin},
  title={A normal form for a real 2-codimensional submanifold in
         $\mathbb{C}^{N+1}$ near a CR singularity},
  journal={Adv.\ Math.},
  volume={243},
  year={2013},
  pages={262--295},
  review={\MR{3062747}},
}

\bib{Coffman}{article}{
   author={Coffman, Adam},
   title={CR singularities of real fourfolds in ${\mathbb{C}}^3$},
   journal={Illinois J. Math.},
   volume={53},
   date={2009},
   number={3},
   pages={939--981 (2010)},
   issn={0019-2082},
   review={\MR{2727363}},
}

\bib{CoffmanLebl}{article}{
   author={Coffman, Adam},
   author={Lebl, Ji\v{r}\'{\i}},
   title={Removing isolated zeroes by homotopy},
   journal={Topol.\ Methods Nonlinear Anal.},
   volume={54},
   date={2019},
   number={1},
   pages={275--296},
   issn={1230-3429},
   review={\MR{4018281}},
}

\bib{DTZ}{article}{
   author={Dolbeault, Pierre},
   author={Tomassini, Giuseppe},
   author={Zaitsev, Dmitri},
   title={On boundaries of Levi-flat hypersurfaces in ${\mathbb C}^n$},
   language={English, with English and French summaries},
   journal={C.\ R.\ Math.\ Acad.\ Sci.\ Paris},
   volume={341},
   date={2005},
   number={6},
   pages={343--348},
   issn={1631-073X},
   review={\MR{2169149}},
}

\bib{DTZ2}{article}{
   author={Dolbeault, Pierre},
   author={Tomassini, Giuseppe},
   author={Zaitsev, Dmitri},
   title={Boundary problem for Levi flat graphs},
   journal={Indiana Univ.\ Math.\ J.},
   volume={60},
   date={2011},
   number={1},
   pages={161--170},
   issn={0022-2518},
   review={\MR{2952414}},
} 

\bib{ER}{article}{
   author={Ebenfelt, Peter},
   author={Rothschild, Linda. P.},
   title={Images of real submanifolds under finite holomorphic mappings},
   journal={Comm.\ Anal.\ Geom.},
   volume={15},
   date={2007},
   number={3},
   pages={491--507},
   review={\MR{2379802}},
}

\bib{FangHuang}{article}{
   author={Fang, Hanlong},
   author={Huang, Xiaojun},
   title={Flattening a non-degenerate CR singular point of real codimension
   two},
   journal={Geom.\ Funct.\ Anal.},
   volume={28},
   date={2018},
   number={2},
   pages={289--333},
   issn={1016-443X},
   review={\MR{3788205}},
}

\bib{Garrity}{article}{
   author={Garrity, Thomas},
   title={Global structures on CR manifolds via Nash blow-ups},
   note={Dedicated to William Fulton on the occasion of his 60th birthday},
   journal={Michigan Math. J.},
   volume={48},
   date={2000},
   pages={281--294},
   issn={0026-2285},
   review={\MR{1786491}},
}

\bib{Gong94:duke}{article}{
   author={Gong, Xianghong},
   title={Normal forms of real surfaces under unimodular transformations
   near elliptic complex tangents},
   journal={Duke Math.\ J.},
   volume={74},
   date={1994},
   number={1},
   pages={145--157},
   issn={0012-7094},
   review={\MR{1271467}},
}

\bib{GongLebl}{article}{
   author={Gong, Xianghong},
   author={Lebl, Ji\v{r}\'\i},
   title={Normal forms for CR singular codimension-two Levi-flat submanifolds},
   journal={Pacific J.\ Math.},
   volume={275},
   date={2015},
   number={1},
   pages={115--165},
   review={\MR{3336931}},
}

\bib{GongStolovitchI}{article}{
   author={Gong, Xianghong},
   author={Stolovitch, Laurent},
   title={Real submanifolds of maximum complex tangent space at a CR
   singular point, I},
   journal={Invent. Math.},
   volume={206},
   date={2016},
   number={2},
   pages={293--377},
   issn={0020-9910},
   review={\MR{3570294}},
}

\bib{GongStolovitchII}{article}{
   author={Gong, Xianghong},
   author={Stolovitch, Laurent},
   title={Real submanifolds of maximum complex tangent space at a CR
   singular point, II},
   journal={J. Differential Geom.},
   volume={112},
   date={2019},
   number={1},
   pages={121--198},
   issn={0022-040X},
   review={\MR{3948229}},
}

\bib{Huang98}{article}{
   author={Huang, Xiaojun},
   title={On an $n$-manifold in ${\bf C}^n$ near an elliptic complex
   tangent},
   journal={J. Amer. Math. Soc.},
   volume={11},
   date={1998},
   number={3},
   pages={669--692},
   issn={0894-0347},
   review={\MR{1603854}},
}

\bib{HuangKrantz95}{article}{
   author={Huang, Xiaojun},
   author={Krantz, Steven G.},
   title={On a problem of Moser},
   journal={Duke Math.\ J.},
   volume={78},
   date={1995},
   number={1},
   pages={213--228},
   issn={0012-7094},
   review={\MR{1328757}},
}

\bib{HuangYin09}{article}{
   author={Huang, Xiaojun},
   author={Yin, Wanke},
   title={A Bishop surface with a vanishing Bishop invariant},
   journal={Invent.\ Math.},
   volume={176},
   date={2009},
   number={3},
   pages={461--520},
   issn={0020-9910},
   review={\MR{2501295}},
}

\bib{HuangYin09:codim2}{article}{
   author={Huang, Xiaojun},
   author={Yin, Wanke},
   title={A codimension two CR singular submanifold that is formally
   equivalent to a symmetric quadric},
   journal={Int.\ Math.\ Res.\ Not.\ IMRN},
   date={2009},
   number={15},
   pages={2789--2828},
   issn={1073-7928},
   review={\MR{2525841}},
}

\bib{HuangYin:flattening1}{article}{
   author={Huang, Xiaojun},
   author={Yin, Wanke},
   title={Flattening of CR singular points and analyticity of the local hull
   of holomorphy I},
   journal={Math. Ann.},
   volume={365},
   date={2016},
   number={1-2},
   pages={381--399},
   issn={0025-5831},
   review={\MR{3498915}},
}

\bib{HuangYin:flattening2}{article}{
   author={Huang, Xiaojun},
   author={Yin, Wanke},
   title={Flattening of CR singular points and analyticity of the local hull
   of holomorphy II},
   journal={Adv. Math.},
   volume={308},
   date={2017},
   pages={1009--1073},
   issn={0001-8708},
   review={\MR{3600082}},
}

\bib{KenigWebster:82}{article}{
   author={Kenig, Carlos E.},
   author={Webster, Sidney M.},
   title={The local hull of holomorphy of a surface in the space of two
   complex variables},
   journal={Invent.\ Math.},
   volume={67},
   date={1982},
   number={1},
   pages={1--21},
   issn={0020-9910},
   review={\MR{664323}},
}

\bib{Lai}{article}{
   author={Lai, Hon Fei},
   title={Characteristic classes of real manifolds immersed in complex
   manifolds},
   journal={Trans.\ Amer.\ Math.\ Soc.},
   volume={172},
   date={1972},
   pages={1--33},
   issn={0002-9947},
   review={\MR{314066}},
}

\bib{LMSSZ}{article}{
   author={Lebl, Ji{\v{r}}{\'{\i}}},
   author={Minor, Andr{\'e}},
   author={Shroff, Ravi},
   author={Son, Duong},
   author={Zhang, Yuan},
   title={CR singular images of generic submanifolds under holomorphic maps},
   journal={Ark.\ Mat.},
   volume={52},
   date={2014},
   number={2},
   pages={301--327},
   issn={0004-2080},
   review={\MR{3255142}},
}

\bib{crext4}{article}{
   author={Lebl, Ji\v{r}\'{\i}},
   author={Noell, Alan},
   author={Ravisankar, Sivaguru},
   title={On the Levi-flat Plateau problem},
   journal={Complex Anal. Synerg.},
   volume={6},
   date={2020},
   number={1},
   pages={Paper No. 3, 15},
   issn={2524-7581},
   review={\MR{4052029}},
}

\bib{crext5}{unpublished}{
   author={Lebl, Ji\v{r}\'{\i}},
   author={Noell, Alan},
   author={Ravisankar, Sivaguru},
   title={A CR singular analogue of Severi's theorem},
   note={Accepted to Math.\ Z.  Preprint \href{https://arxiv.org/abs/1909.04752}{arXiv:1909.04752}},
}

\bib{LeeFirstEd}{book}{
   author={Lee, John M.},
   title={Introduction to smooth manifolds},
   series={Graduate Texts in Mathematics},
   volume={218},
   publisher={Springer-Verlag, New York},
   date={2003},
   pages={xviii+628},
   isbn={0-387-95495-3},
   review={\MR{1930091}},
}

\bib{Moser85}{article}{
   author={Moser, J{\"u}rgen K.},
   title={Analytic surfaces in ${\bf C}^2$ and their local hull of
   holomorphy},
   journal={Ann.\ Acad.\ Sci.\ Fenn.\ Ser.\ A I Math.},
   volume={10},
   date={1985},
   pages={397--410},
   issn={0066-1953},
   review={\MR{802502}},
}

\bib{MoserWebster83}{article}{
   author={Moser, J{\"u}rgen K.},
   author={Webster, Sidney M.},
   title={Normal forms for real surfaces in ${\bf C}^{2}$ near complex
   tangents and hyperbolic surface transformations},
   journal={Acta Math.},
   volume={150},
   date={1983},
   number={3--4},
   pages={255--296},
   issn={0001-5962},
   review={\MR{709143}},
}

\bib{Slapar}{article}{
   author={Slapar, Marko},
   title={On complex points of codimension 2 submanifolds},
   journal={J.\ Geom.\ Anal.},
   volume={26},
   date={2016},
   number={1},
   pages={206--219},
   issn={1050-6926},
   review={\MR{3441510}},
}

\bib{SlaparStarcic}{article}{
   author={Slapar, Marko},
   author={Star\v{c}i\v{c}, Tadej},
   title={On normal forms of complex points of codimension 2 submanifolds},
   journal={J.\ Math.\ Anal.\ Appl.},
   volume={461},
   date={2018},
   number={2},
   pages={1308--1326},
   issn={0022-247X},
   review={\MR{3765492}},
}

\bib{Tumanov:ext}{article}{
   author={Tumanov, A.\ E.},
   title={Extension of CR-functions into a wedge from a manifold of finite
   type},
   language={Russian},
   journal={Mat.\ Sb.\ (N.S.)},
   volume={136(178)},
   date={1988},
   number={1},
   pages={128--139},
   issn={0368-8666},
   translation={
      journal={Math.\ USSR-Sb.},
      volume={64},
      date={1989},
      number={1},
      pages={129--140},
      issn={0025-5734},
   },
   review={\MR{945904}},
}

\bib{Whitney:book}{book}{
  author={Whitney, Hassler},
  title={Complex analytic varieties},
  publisher={Addison-Wesley Publishing Co., Reading, Mass.-London-Don
  Mills, Ont.},
  date={1972},
  pages={xii+399},
  review={\MR{0387634}},
}

\end{biblist}
\end{bibdiv}


\end{document}